\pdfoutput=1
\documentclass[leqno]{article}
\usepackage{graphicx} 
\usepackage{mathtools}
\usepackage{comment}
\usepackage[abbrev,lite,nobysame]{amsrefs}
\usepackage[svgnames]{xcolor}
\usepackage{amsthm,amsfonts,amssymb}

\usepackage{thmtools}

\usepackage{mathrsfs}
\usepackage{bm}
\usepackage{fontawesome}
\usepackage{csquotes}
\usepackage{mathabx}
\usepackage{ulem}
\usepackage[protrusion=true, tracking=true, expansion=true, kerning=true,spacing=true]{microtype}
\usepackage{enumitem}
\usepackage{caption}
\usepackage{tikz}
\usepackage{pgfplots}
\usepgflibrary{patterns}
\usetikzlibrary{arrows,arrows.meta,calc,shapes,patterns.meta,decorations.pathmorphing,decorations.text,positioning,arrows,decorations.markings,shapes.arrows} 
\pgfplotsset{compat=1.10}

\tikzdeclarepattern{
	name=mylines,
	parameters={
		\pgfkeysvalueof{/pgf/pattern keys/size},
		\pgfkeysvalueof{/pgf/pattern keys/angle},
		\pgfkeysvalueof{/pgf/pattern keys/line width},
	},
	bounding box={
		(0,-0.5*\pgfkeysvalueof{/pgf/pattern keys/line width}) and
		(\pgfkeysvalueof{/pgf/pattern keys/size},
		0.5*\pgfkeysvalueof{/pgf/pattern keys/line width})},
	tile size={(\pgfkeysvalueof{/pgf/pattern keys/size},
		\pgfkeysvalueof{/pgf/pattern keys/size})},
	tile transformation={rotate=\pgfkeysvalueof{/pgf/pattern keys/angle}},
	defaults={
		size/.initial=5pt,
		angle/.initial=45,
		line width/.initial=.4pt,
	},
	code={
		\draw [line width=\pgfkeysvalueof{/pgf/pattern keys/line width}]
		(0,0) -- (\pgfkeysvalueof{/pgf/pattern keys/size},0);
	},
}

\theoremstyle{plain}
\newtheorem{theorem}{Theorem}[section]
\newtheorem{lemma}[theorem]{Lemma}
\newtheorem{corollary}[theorem]{Corollary}
\newtheorem{proposition}[theorem]{Proposition}
\newtheorem{definition}[theorem]{Definition}
\newtheorem{remark}[theorem]{Remark}

\usepackage[colorlinks=true,linkcolor=NavyBlue,
citecolor=NavyBlue,urlcolor=NavyBlue]{hyperref}

\hypersetup{hypertexnames=false}

\usepackage{cleveref}

\crefname{equation}{}{}

\crefname{appendix}{Appendix}{Appendices}
\Crefname{appendix}{Appendix}{Appendices}

\RenewCommandCopy{\theHtheorem}{\thetheorem}

\newcommand{\boxnote}[1]{\makebox[0mm][l]{$#1$}}

\usepackage{tocloft}

\renewcommand{\leq}{\leqslant}

\renewcommand{\ge}{\geqslant}
\renewcommand{\geq}{\geqslant}
\renewcommand{\omega}{\omegaup}
\renewcommand{\longrightarrow}{\to}

\title{Controllability and Exponential Mixing in Singular Interacting Particle Systems}
\date{}

\begin{document}\linespread{1.05}\selectfont
	
	\author{Laurent Mertz\,\footnote{Department of Mathematics, City University of Hong Kong, Kowloon, Hong Kong, China, e-mail: \href{mailto:lmertz@cityu.edu.hk}{lmertz@cityu.edu.hk}}\and
		Vahagn~Nersesyan\,\footnote{NYU-ECNU Institute of Mathematical Sciences at NYU Shanghai, 3663 Zhongshan Road North, Shanghai, 200062, China, e-mail: \href{mailto:vahagn.nersesyan@nyu.edu}{Vahagn.Nersesyan@nyu.edu}}\and
		Manuel~Rissel\,\footnote{ShanghaiTech University, Institute of Mathematical Sciences, 201210 Shanghai, China, e-mail: \href{mailto:mrissel@shanghaitech.edu.cn}{mrissel@shanghaitech.edu.cn}}
	}

	\maketitle
	
	\begin{abstract}

		This article concerns interacting particle systems with singular kernels, driven either by degenerate deterministic controls or by degenerate decomposable noise. In the deterministic setting, we establish global exact controllability and a topologically robust property called solid controllability. 
        Moreover, we prove a result that guarantees global approximate controllability with prescribed trajectories.        
        For stochastic dynamics, we obtain ergodicity and exponential mixing by utilizing coupling and recurrence mechanisms based on controllability.
        Our approach exploits the singularity and applies to a broad class of models, including Biot–Savart, Coulomb, Riesz, and Yukawa interactions, as well as heterogeneous multi‑species systems. 
	\end{abstract}
	
	{\renewcommand\thefootnote{}
		\footnotetext{\textbf{Keywords:} particle system, singular kernel, controllability, decomposable noise, ergodicity, mixing; }
		\footnotetext{\textbf{MSC2020:} Primary 93B05; Secondary 37A25, 60H10, 70F10, 76B47}
	}

	\setcounter{tocdepth}{2}
	\tableofcontents 

	\section{Introduction}\label{section:introduction}
	The objectives of this article are twofold. First, we introduce a new deterministic controllability mechanism for a general class of interacting particle systems with singular interactions. This yields robust and global controllability properties of various classical systems for which controllability was largely unknown in the existing literature. Second, under natural assumptions on Lyapunov structure and well-posedness that are not required for controllability, we apply our deterministic controllability results to prove exponential mixing for associated stochastic dynamics driven by highly degenerate noise of a general type. Our main interest lies in systems of the form
	\begin{equation}\label{equation:dynamics} 
		\dot{x}(t)
		= \mathbb{J}\nabla \mathcal{H}_{N+1}(x(t))
		+ \mathbb{B}\zeta(t),
	\end{equation}
	describing the motion of $N+1$ points in $\mathbb{R}^d$ with energy
	\[
	\mathcal{H}_{N+1}(x_1,\dots,x_{N+1})
	= \frac{1}{2}\sum_{i\neq j} g(x_i-x_j)
	+ \sum_i Q(x_i),
	\]
	where $g$ is a singular kernel, $Q$ a potential,~$\mathbb{J}$ an appropriate $(N+1)d \times (N+1)d$ matrix, and the input $\mathbb{B} \zeta$ represents either a noise or control. We allow highly degenerate inputs, for instance, acting merely on a single particle. To focus on this most degenerate setting covered here, we label the position of the particle which is directly impacted by $\mathbb{B} \zeta$ as $N+1$ and fix
	\[
		\mathbb{B} \coloneq \begin{bmatrix}
			0_{\mathbb{R}^{Nd\times Nd}} & 0_{\mathbb{R}^{Nd\times d}} \\
			0_{\mathbb{R}^{d\times Nd}} & I_d
		\end{bmatrix}.
	\]
	In addition to \eqref{equation:dynamics}, our results also apply to heterogeneous multi-species systems and interactions governed by other structures (see Sections~\ref{subsection:dc} and~\ref{section:expmixing} for the general setting). One of our main contributions is that we provide global controllability results for a range of classical systems by a general argument that requires only mild assumptions on the system. An important aspect of our probabilistic application is that, for arbitrarily large $N$, the noise is allowed to have fixed rank~$d$ independent of $N$; propagation of randomness through pairwise singular interactions produces exponential mixing rates for the entire system. The probabilistic mechanism behind this is well-understood, as described below, but relies on strong controllability properties, and we show here that singularity of the kernels is an advantage rather than an obstacle in this regard.
	
	Systems of the form \eqref{equation:dynamics} arise in the study of complex phenomena such as vortex motion in turbulent flows or the dynamics of~charged particle clouds. A classical example is the point vortex system that was investigated already by Helmholtz \cite{Helmholtz1858}, Kirchhoff \cite{Kirchhoff1876}, and Onsager \cite{EyinkSreenivasan2006}, see also \cite{Newton2001,MarchioroPulvirenti1994,Khanin1981,Donati2023,GlassMunnierSueur2018}.  Other prominent examples include Coulomb or general Riesz interactions \cite{Serfaty2026,Serfaty2018,Serfaty2020}. As a concrete application of ergodicity in particle systems, one can name the computation of equilibrium averages in molecular dynamics simulations; e.g., see \cite{CookeHerzogMattinglyMcKinleySchmidler2017,CookeSchmidler2008,HerzogMattingly2019}.
	
	The basic intuition formalized here is as follows. When pushing the distinguished control particle $x_{N+1}$ via external forcing quickly towards another one, say $x_1$, singular interactions guide $x_1$ in a certain direction, while the other particles at $x_2, \dots, x_N$ barely feel the exerted force during a short time interval. For a minimal example, consider the case of two particles $x_1$ and $x_2$ controlled by a distinguished particle $x_3$, that is
	\begin{equation}\label{equation:minimal}
		N=2, \quad \mathbb{J} = I_{3d}, \quad \mathcal{H}_3(x) = \sum_{1\leq i < j \leq 3}\mathscr{W}(x_i-x_j),
	\end{equation}
	where $\nabla \mathscr{W} = \mathscr{K}$ for some singular kernel $\mathscr{K}\colon \mathbb{R}^d\setminus\{0\}\longrightarrow\mathbb{R}^d$ behaving like $|z|^{-1-1/\alpha}z$ with a number $\alpha > 0$. When the control~$x_{N+1}$ is during the short time interval $[0, \beta \delta]$ chosen of the feedback form $x_1 - \delta^{\alpha} u$ for some unit vector $u$ and length $\beta > 0$, the particle at $x_1$ moves until time $\beta \delta$ approximately with velocity $\mathscr{K}(\delta^{\alpha} u) \approx \beta \delta^{-1} w$ in a direction~$w$ determined from~$u$ by~$\mathscr{K}$. When taking $0 < \delta \ll 1$, the particle at $x_2$ barely notices the exerted force until time $\beta\delta$ and remains at its position, meanwhile $x_1$ moves in direction $w$ by a distance independent of~$\delta$. Assuming for now that no singularity occurs when $\delta\longrightarrow0$ (which will be a key issue in our proofs), this can be written as
	\begin{equation*}
		\begin{aligned}
			x_1(\beta\delta) & = x_1(0) + \int_0^{\beta\delta} \!\! \big( \underset{\mathscr{K}(\delta^{\alpha}u)}{\underbrace{ \mathscr{K}(x_1(s)-x_{N+1}(s)}}) + \underset{\mbox{\footnotesize bounded for small $s$}}{\underbrace{\mathscr{K}(x_1(s)-x_2(s))}}  \big) \, ds \\
            & \approx x_1(0) + \beta w,\\
			x_2(\beta\delta) & \approx x_2(0) - \beta \delta \mathscr{K}(x_1(0)-x_2(0)) + \delta \mathscr{K}(x_2(0)-x_1(0)+\delta^{\alpha}u) \approx x_2(0),
		\end{aligned}
	\end{equation*}
	where $\beta > 0$ determines the distance that $x_1$ is displaced in the direction $w = w(u)$.
	
	By making these heuristics rigorous, and by establishing additional ingredients such as a control stage for $x_{N+1}$ and continuous dependence of the controls on the targets, we obtain global exact controllability, solid controllability, and approximate trajectorial controllability for general classes of singular particle interaction systems. This addresses a wide gap in the literature, as there is a lack of any such controllability results for Coulomb, Riesz, Yukawa, or more general singular interactions.
    Recently, notable progress in this direction has been made by \cite{DorszGlass2023}, which relies on the ideas of Filippov's convex integration for proving global exact controllability of the point vortex system. Our approach, which covers general singular interactions, is different from the latter one, but we re-obtain global exact controllability for the point vortex system, and in addition also solid controllability. 

	Controllability of deterministic dynamics is an essential tool in the study of mixing for randomly forced differential equations, in which the deterministic control is replaced by noise. The bridge between the two is provided by the probabilistic coupling method (e.g., see \cite[Section 3]{KS-12}), whose basic mechanism can be summarized as follows. Consider two copies of the forced system, issued from arbitrary initial data. A Lyapunov structure makes both copies recur to a common compact region, from which approximate controllability drives them into an arbitrarily small neighborhood of a distinguished point; there the dynamics are regular, and solid controllability provides a contraction property, so that at each such visit the two copies coincide with a probability bounded away from zero. Consequently, the copies couple after finitely many visits and remain equal thereafter, and quantifying this recurrence through the Lyapunov structure upgrades the almost-sure coupling to exponential convergence in the total variation distance. 
	
	Relying on controllability and the coupling argument described above, we obtain new results on the long-time behavior of singular interacting particle systems driven by degenerate decomposable noise. Contrary to classical probabilistic approaches such as those developed in \cite{AK-87,MT-93,Khasminskii-1980,DaPratoZabczyk-1996,HairerMattingly2011,Raq-19}, we use our abstract criterion established in \cite{MertzNersesyanRissel2024}, which is an extension of \cite{Sh-17} to non-compact settings; see also \cite{Raq-19}, where harmonic networks driven by white noise are studied using a controllability-based approach in the same spirit. To this end, we first establish an abstract result in \Cref{section:expmixing} stating that, under a Lyapunov assumption, the system admits a unique stationary measure, which is exponentially mixing. Next, we discuss the representative example of Riesz interactions with dissipation and driven by general unbounded decomposable noise acting on the $(N+1)$-th particle:
	\[
		\dot{x}(t) = \mathbb{J} \nabla \mathcal{H}_{N+1}(x(t)) +  \begin{bmatrix}
		0_{\mathbb{R}^{Nd\times Nd}} & 0_{\mathbb{R}^{Nd\times d}} \\
		0_{\mathbb{R}^{d\times Nd}} & I_d
		\end{bmatrix} \zeta,
	\]
	where $\zeta$ is a general decomposable noise of the form
	\begin{equation*}
		\zeta(t) = \sum_{k=1}^{\infty} \mathbb{I}_{[k-1,k)}(t)\eta_k(t-k+1),
	\end{equation*}
	for independent and identically distributed (i.i.d.) random variables $(\eta_k)_{k \in \mathbb{N}}$ with values in $L^2((0,1);\mathbb{R}^{(N+1)d})$. More precisely, under natural assumptions on Lyapunov structure and damping, we obtain exponential convergence to a unique stationary measure in the total variation distance for a large class of initial distributions. 
    
    It is worth mentioning that, instead of using our criterion in \cite{MertzNersesyanRissel2024} to deduce mixing from controllability, it might be feasible to follow, in part, a more classical route that consists of establishing hypoellipticity (via H\"ormander's bracket condition) near the diagonal and proving a positive lower bound for the transition density (for instance, see \cite{Raq-19}). The approximate controllability result obtained here would nonetheless be essential for that approach: hypoellipticity yields only local information near a point, whereas the coupling construction requires global reachability of such neighborhoods. Moreover, in our framework, solid controllability is a rather direct consequence of approximate controllability together with a continuity property of the state-to-control map, which for general systems is less technical than the hypoelliptic route. It should be mentioned that the abstract criterion from \cite{MertzNersesyanRissel2024} works even for systems that are merely smooth in some neighborhood, while for our probabilistic results we assume global smoothness away from the singular set. 
	
	In this article, we do not treat the case of white noise in detail to maintain a unified presentation. Indeed, white noise leads to a less regular control system for which \cite{MertzNersesyanRissel2024} is not directly applicable in general. However, one could obtain exponential mixing under degenerate white noise by adopting the approach from \cite{Raq-19}, combined with our approximate controllability. For the simplest case of non-degenerate white noise acting on all particle equations directly, one could obtain exponential mixing by using \cite{MT-93}. Further, let us remark that exponential mixing of Langevin dynamics with singular potentials, and white noise entering the momentum equations of all particles, has been obtained, for instance, by \cite{HerzogMattingly2019}. See also \cite{CookeHerzogMattinglyMcKinleySchmidler2017} for two-dimensional Hamiltonian systems with repulsive Lennard-Jones type potentials.

	\paragraph{Basic notation.} Given $m \in \mathbb{N}$, we denote by $B_{\mathbb{R}^m}(z, R)$ the open ball of radius $R>0$ in $\mathbb{R}^m$ and by $\overline{B}_{\mathbb{R}^m}(z, R)$ its closure. In general, the word \emph{ball} refers to non-degenerate balls: only balls of positive radius are considered. If not specified otherwise, we refer by $\operatorname{dist}(y,z)$ to the Euclidean distance of two points $z,y \in \mathbb{R}^m$ and by $\langle \cdot, \cdot \rangle$ to the inner product inducing the standard norm $|z| \coloneq \smash{\sqrt{z_1^2 + \cdots + z_m^2}}$. If $y \in \mathbb{R}^m$ and $S, O \subset \mathbb{R}^m$, the distance between $S$ and $O$ is denoted by $\operatorname{dist}(S, O) \coloneq \inf_{s\in S, o \in O} |s-o|$, while $\operatorname{dist}(y, S)$ means $\operatorname{dist}(\{y\}, S)$. Other notation will be introduced later in the text when needed.

	\paragraph{Outline of this article.}
	In the remaining part of the introduction, we state our controllability results, which are proved in \Cref{section:proofofmainresult}. The noise driven dynamics are presented in \Cref{section:expmixing}, where we state and prove our probabilistic results. An appendix provides some auxiliary results, including an essentially known sufficient condition for solid controllability.

	\subsection{Controllability problems}\label{subsection:dc} 
	\subsubsection{Configuration space}
	Let $d \geq 2$ and $\mathcal{M} \coloneq \mathbb{R}^d$. We aim to control the dynamics of $N+1$ singularly interacting particles, located at positions $x(t) = (x_1(t), \dots, x_{N+1}(t)) \in \mathcal{M}^{N+1}$ at time $t \geq 0$, by acting on $x_{N+1}$ with an external input. In particular, this involves the difficulty of avoiding a collapse
	\[
		\liminf\limits_{t \nearrow T_*}|x_j(t) - x_l(t)| = 0,
	\]
	where $x_j$ and $x_l$ are the positions of two distinguished particles, respectively, and  $T_*>0$ belongs to the time interval on which the system is observed. A collapse is called a collision provided that
	\[
		\lim_{t \nearrow T_*}|x_j(t) - x_l(t)| = 0.
	\]
	The evolution of~$x$~is only meaningful in the complement $\Delta_{N+1}^{\complement} \coloneq \mathcal{M}^{N+1} \setminus \Delta_{N+1}$ of the diagonal (singular set)
	\[
		\Delta_{N+1} \coloneq \left\{ x \in \mathcal{M}^{N+1} \, | \, \exists i,j \in \{1,\dots,N+1\}, \, i \neq j \colon x_i = x_j \right\}.
	\]
	Since $\mathcal{M} = \mathbb{R}^d$ is unbounded, a particle could in principle escape to infinity in finite time; such finite-time blow-up of the trajectories must be avoided as well, in addition to collisions. See also \cite{Donati2023,MarchioroPulvirenti1994,Newton2001} for further background. 

	\subsubsection{Governing equations}
	The particle systems considered in this work include \eqref{equation:dynamics} and are of the general form
	\begin{equation}\label{equation:particlesystem_deterministic}
		\begin{cases}
			\dot{x}_i = K_i(x_i-x_{N+1}) + F_i(x), \\
			\dot{x}_{N+1} = F_{N+1}(x) + \zeta,\\
			x(0) = x_{{\mathfrak{in}}},
		\end{cases} \boxnote{\qquad \quad \quad (1 \leq i \leq N)}
	\end{equation}
	where $\zeta$ is the external input (the control) and the interactions are governed by continuous functions
	\[
		F_1, \dots, F_{N+1}\colon \Delta_{N+1}^{\complement} \to \mathbb{R}^d, \quad K_1,\dots,K_N\colon \mathcal{M}\setminus \{0\} \longrightarrow \mathbb{R}^d.
	\]
	In addition to continuity, we assume that
	\begin{gather}\label{equation:F_i}
		F_i(x_1,\dots,x_{N+1}) = 	\widetilde{F}_i(x_1,\dots,x_N) + \widehat{F}_i(x_1,\dots,x_{N+1}),\\
			K_i(z) = f_i(|z|) \frac{A_iz}{|z|^{p_i}},\label{equation:generalformofsingularkernel}
	\end{gather}
	where 
	\begin{itemize}
		\item $F_{N+1}$ is locally Lipschitz on $\Delta_{N+1}^{\complement}$, 
		\item $\widetilde{F}_i$ is locally Lipschitz on $\Delta_N^{\complement}$,
		\item $\widehat{F}_i$ is locally Lipschitz on $\mathcal{M}^{N+1}$,
		\item $A_i$ is an invertible $d\times d$ matrix, 
		\item $f_i\colon [0, \infty) \longrightarrow \mathbb{R}$ is bounded Lipschitz with $f_i(0) \neq 0$, 
		\item $p_i > 1$
	\end{itemize}
	for all $1 \leq i \leq N$. See also \Cref{subsubsection:examples} for several well-known models that are covered by the formulation above.

	\begin{remark}
		 In \eqref{equation:particlesystem_deterministic}, writing $K_i$ and $F_i$ separately (instead of combining them into a single term) emphasizes the driving singular interaction between $x_i$ and the control particle $x_{N+1}$ for $1\leq i \leq N$. It is crucial for our controllability method that all $K_1, \dots, K_N$ are sufficiently singular. To clarify this notation, consider a system resembling a typical application we have in mind:
		\begin{equation}\label{equation:examplesystem}
			\dot{x}_i =  \sum_{1 \leq j \neq i \leq N+1} K(x_i-x_j) + \delta_{i, N+1} \zeta,
		\end{equation}
		for $1 \leq i \leq N+1$,  a kernel $K$ of the type \eqref{equation:generalformofsingularkernel}, and $\delta_{i, N+1} = 1$ if $i=N+1$ and $\delta_{i, N+1} = 0$ if $i\neq N+1$. The system \eqref{equation:examplesystem} can then be recast in the form~\eqref{equation:particlesystem_deterministic} by taking 
		\[
			K_1 = \dots  = K_N = K, \quad F_i = \sum_{1 \leq j \neq i \leq N} K(x_i-x_j), \quad F_{N+1} = \sum_{j=1}^N K(x_{N+1} - x_j)
		\]
		for $1 \leq i \leq N$. In summary, the interactions modeled by \eqref{equation:particlesystem_deterministic} are of the following types
		\begin{enumerate}[labelwidth=1.2cm,
			labelsep=0.4em,
			leftmargin=!,
			align=right]
			\item[$K_i$:] necessarily singular interactions between $x_i$ and $x_{N+1}$;
			\item[$F_i$:] general interactions among $x_1, \dots, x_N$ and non-singular interactions among $x_1, \dots, x_{N+1}$;
			\item[$F_{N+1}$:] general interactions among $x_1, \dots, x_{N+1}$.
		\end{enumerate}
	\end{remark}
	
	\subsubsection{Controllability notions}
	Let $T_{\mathfrak{f}}>0$ be arbitrarily fixed in the definitions below.

	\begin{definition}\label{definition:exactcontrollability_additivecontrol}
		The system~\eqref{equation:particlesystem_deterministic} is said to be \emph{exactly controllable} in time $T_{\mathfrak{f}}$, if for every initial state $x_{\mathfrak{in}} \in \Delta^{\complement}_{N+1}$ and every target state $x_{\mathfrak{f}} \in \Delta^{\complement}_{N+1}$
		there exists a control $\zeta \in C^0([0,T_{\mathfrak{f}}]; \mathbb{R}^d)$ such that the corresponding solution
		$x$ to~\eqref{equation:particlesystem_deterministic} is defined on $[0,T_{\mathfrak{f}}]$ and satisfies
		\(
		x(T_{\mathfrak{f}})=x_{\mathfrak{f}}.
		\)
	\end{definition}

	Motivated by the applications in \Cref{section:expmixing} on the long-time behavior of stochastic versions of \eqref{equation:particlesystem_deterministic}, we also prove a property called solid controllability. The latter ensures that targets from a suitable ball can be reached in a way stable under perturbations of the system or the controls. To make this precise, for any given $x_{\mathfrak{in}} \in \Delta^{\complement}_{N+1}$, let $\mathcal{Q}(T_{\mathfrak{f}}, x_{\mathfrak{in}}) \subset C^0([0,T_{\mathfrak{f}}];\mathbb{R}^d)$ be the set of all controls~$\zeta$ for which the solution~$x$ to the Cauchy problem~\eqref{equation:particlesystem_deterministic} is defined on $[0,T_{\mathfrak{f}}]$.  We endow $\mathcal{Q}(T_{\mathfrak{f}}, x_{\mathfrak{in}})$ with the $C^0([0,T_{\mathfrak{f}}];\mathbb{R}^d)$-topology 
	and write
	\[
		x(T_{\mathfrak{f}};x_{\mathfrak{in}}, \zeta) \coloneq x(T_{\mathfrak{f}}),
	\]
	where $x$ solves~\eqref{equation:particlesystem_deterministic} with initial state $x_{\mathfrak{in}}$ and control $\zeta \in \mathcal{Q}(T_{\mathfrak{f}}, x_{\mathfrak{in}})$.

	\begin{definition}\label{definition:solidcontrollability}
		The system \eqref{equation:particlesystem_deterministic} is \emph{solidly controllable} from $x_{\mathfrak{in}} \in \Delta^{\complement}_{N+1}$ in time $T_{\mathfrak{f}}$, if there exist $\delta>0$, a ball $B\subset \Delta_{N+1}^{\complement}$, and a compact set $\mathcal{C} \subset \mathcal{Q}(T_{\mathfrak{f}},x_{\mathfrak{in}})$ such that for any continuous map $\Phi\colon \mathcal{C} \to \mathcal{M}^{N+1}$ satisfying
		\[
		  \sup_{\zeta \in \mathcal{C}} | \Phi(\zeta) - x(T_{\mathfrak{f}};x_{\mathfrak{in}}, \zeta) | \leq \delta,
		\]
		one has $B \subset \Phi(\mathcal{C})$.
	\end{definition}

	\begin{remark}
	Contrary to \Cref{definition:exactcontrollability_additivecontrol}, which is a global controllability notion for arbitrary initial and target states in $\Delta_{N+1}^{\complement}$, solid controllability as in  \Cref{definition:solidcontrollability} provides exact controllability only to targets lying in some ball that might be close to the initial state or not. The key point is that, in general, one cannot choose this ball freely. Compared with exact controllability, solid controllability carries additional topological robustness in the sense that $B \subset \Phi(\mathcal{C})$ holds for all sufficiently small perturbations~$\Phi$ of the control-to-state operator $\zeta \mapsto x(T_{\mathfrak{f}};x_{\mathfrak{in}}, \zeta)$. See \cite{Coron-07} for more background on control theory and \cite{AKSS-07,AgrachevSarychev2008} for solid controllability and its applications.
	\end{remark}

	\subsubsection{Main results}
	We are now in the position to state our main result on the controllability of the system  \eqref{equation:particlesystem_deterministic}. The proof is given in \Cref{section:proofofmainresult}.
    
	\begin{theorem}\label{theorem:maincontroltheorem}
		The system \eqref{equation:particlesystem_deterministic} is exactly controllable in any time $T_{\mathfrak{f}} > 0$ and solidly controllable from any $x_{\mathfrak{in}} \in \Delta_{N+1}^{\complement}$ in any time $T_{\mathfrak{f}} > 0$.
	\end{theorem}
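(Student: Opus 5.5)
The plan is to reduce both assertions to a single building block, a \emph{short-time kick}, and then to assemble the kicks. The kick moves one chosen particle $x_i$, $1\leq i\leq N$, by a prescribed vector while every other particle is essentially fixed, using the singular interaction $K_i$ with the control particle $x_{N+1}$.

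\textbf{Step 1 (steering the control particle).} Because $\zeta$ enters additively, the $x_{N+1}$-equation lets us impose any $C^1$ path $x_{N+1}(\cdot)$ that avoids $x_1,\dots,x_N$. We simply set $\zeta = \dot x_{N+1} - F_{N+1}(x)$. The remaining particles then solve a non-autonomous ODE with locally Lipschitz right-hand side. If $x_{N+1}$ stays a distance $\geq r$ from all of them and the time is short, they move only $O(t)$ with constants depending on $r$ and a compact set. In this way $x_{N+1}$ can be brought close to any $x_i$ along paths with $d\geq 2$, so that generic paths avoid the other particles.

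\textbf{Step 2 (the kick).} On $[0,\beta\delta]$ we take the feedback $x_{N+1}(t) = x_i(t) - \delta^{\alpha_i} u$, where $\alpha_i$ is chosen so that $|K_i(\delta^{\alpha_i}u)| \sim \delta^{-1}$, i.e.\ $\alpha_i = 1/(p_i-1)$. Then
\[
\dot x_i = f_i(\delta^{\alpha_i})\,\delta^{-1} A_i u + \widetilde F_i(x_1,\dots,x_N)+\widehat F_i(x),
\]
and the bounded terms contribute only $O(\delta)$ over the interval. Hence $x_i(\beta\delta) = x_i(0)+\beta f_i(0)A_iu + o(1)$. Since $A_i$ is invertible and $f_i(0)\neq0$, every displacement vector $v=\beta f_i(0)A_iu$ is attainable.

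For $j\neq i$, $j\leq N$, we must control the term $K_j(x_j-x_{N+1})$. Its argument stays near $x_j-x_i$, which is bounded away from zero, so it is bounded and $x_j$ moves only $O(\delta)$. We then exit by moving $x_{N+1}$ away from $x_i$ again in time $O(\delta)$.

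\textbf{Main obstacle: collisions during the kick.} As $x_i$ travels distance $|v|$, it may pass near some $x_j$. To handle this, we split large displacements into a polygonal path in $\mathbb R^d$, $d\geq2$, that avoids the other particles by a margin $r>0$, and we kick along each segment. We also need uniform-in-$\delta$ bounds via a Gronwall argument on the compact set where all pairwise distances are $\geq r/2$. Finally, the kick must be arranged so that the leftover error is $o(1)$ and depends continuously on the parameters $(u,\beta)$.

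\textbf{Step 3 (exact controllability).} Composing kicks, approximate controllability follows: any $x_{\mathfrak f}\in\Delta^\complement_{N+1}$ can be reached within any $\varepsilon$, in any time $T_{\mathfrak f}$. The remaining time is spent near rest; we rescale to keep the total time fixed, which is possible because the kicks are arbitrarily short. For exactness, we take the final stage to be a map $\Psi\colon(v_1,\dots,v_N,y)\mapsto$ endpoint, consisting of successive kicks of sizes $v_i$ followed by steering $x_{N+1}$ to $y$. The map $\Psi$ is continuous, and it is $o(1)$-close uniformly, on a closed ball of parameters, to the affine map $(v,y)\mapsto (x_1+v_1,\dots,x_N+v_N,y)$. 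By the Brouwer degree argument (the appendix criterion), $\Psi$ covers a ball around the target, so the target is hit exactly after first approximately reaching it.

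\textbf{Step 4 (solid controllability).} The same construction applies from any $x_{\mathfrak{in}}$. We take $\mathcal C$ to be the image of the compact parameter ball under the continuous parameter-to-control map; this is compact in $C^0$. The composite $\Phi$ restricted to $\mathcal C$ is a $\delta$-perturbation of a map that is uniformly close to a homeomorphism onto a neighbourhood. The degree argument from the appendix then yields a ball $B\subset\Phi(\mathcal C)$.
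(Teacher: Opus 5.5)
Your building block matches the paper's: the feedback $x_{N+1}=x_i-\delta^{\alpha_i}u$, a bootstrap/Gr\"onwall bound keeping the other particles $O(\delta)$-close, continuous parameter-to-control maps, and the Brouwer-type covering of \Cref{proposition:ucisc}. Your exactness argument takes a different route. You use local surjectivity of a kick family at an approximately reached target. The paper instead reaches a ball $\overline{B}(y,R-r)$ by solid controllability in time $T_{\mathfrak f}/2$ and joins it to $x_{\mathfrak f}$ through the time-reversed system, which has the same structure. Your route is fine in principle.

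The genuine gap is how you fill the prescribed time $T_{\mathfrak f}$. The system with $\zeta=0$ is not at rest. The drifts $K_i+F_i$ and $F_{N+1}$ move the configuration by $O(1)$ over times of order one, and under the standing assumptions the free flow may collide or blow up before $T_{\mathfrak f}$. Time cannot be rescaled either, because only $\zeta$ can be made large, not the drift. What is missing is a holding mechanism:
\begin{itemize}
\item By local well-posedness and compactness, there is a uniform $s>0$ such that free trajectories starting near a fixed configuration (with $x_{N+1}$ at distance $\geq r_0$ from the others) stay in a slightly larger neighbourhood up to time $s$.
\item You then alternate free intervals of length at most $s$ with arbitrarily short corrective kick sequences that return the state near that configuration.
\end{itemize}
This is the content of the proof of \Cref{theorem:maintrajectorial}, which \Cref{corollary:uacallparticles} uses with a constant curve before the final kicks. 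Without it, neither exact nor solid controllability in arbitrary time follows.

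Two further points need repair.

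\emph{Continuity at $v=0$.} Your Brouwer step needs continuity on the whole parameter ball. But $v\mapsto(u,\beta)$ with $v=\beta f(0)Au$ is discontinuous at $v=0$, and the entry and exit of the control particle depend on $u$. Covering a ball around an approximately reached target requires a ball of kick vectors containing $0$, and on such a ball both the control map and the endpoint map are discontinuous. Uniform closeness to $x+v$ does not rescue this: the map $v\mapsto x+v+c\,v/|v|$ is uniformly $c$-close to $x+v$, yet it misses all of $B(x,c)$ except at most one point. The paper avoids this by taking target balls $\mathcal{B}_i$ at distance in $(\ell,2\ell)$ from $x_{\mathfrak{in},i}$. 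Then $\beta\in\mathcal{J}_\ell$ stays bounded below and $u$ depends continuously on the target (\Cref{corollary:approx1cont}). You should likewise use kick vectors from a ball $\overline{B}(w,\rho)$ with $|w|>\rho$, and first steer approximately to $x_{\mathfrak f}-w$.

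\emph{Entry and exit times.} Entry and exit of $x_{N+1}$ must take time $o(\delta)$, not $O(\delta)$. While $|x_i-x_{N+1}|\sim\delta^{\alpha_i}$, the force on $x_i$ is of order $\delta^{-1}$, so an exit lasting a fixed multiple of $\delta$ moves $x_i$ by $O(1)$. The paper handles this with correction profiles $\chi_l$ supported on intervals of length $1/l$, with $l\to\infty$ at fixed $\delta$. These keep $x_{N+1}$ outside the $\delta^{\alpha}/2$-neighbourhoods $G_a,G_b$ of the particle paths.
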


  	As a by-product, our proof of the above result provides also global approximate controllability with prescribed trajectories for the first $N$ particles on $[0, T_{\mathfrak{f}}]$. See \Cref{subsection:ap} for the details.
  	\begin{theorem}\label{theorem:maintrajectorial}
  		Let $T_{\mathfrak{f}} > 0$, $x_{\mathfrak{in}}, x_{\mathfrak{f}} \in \Delta_{N+1}^{\complement}$, and $\varepsilon > 0$. Moreover, let $\mathfrak{c} = (\mathfrak{c}_1, \dots, \mathfrak{c}_N)\in C^0([0,T_{\mathfrak{f}}]; \Delta_{N}^{\complement})$ be a curve with
  		\[
  			(\mathfrak{c}(0),x_{\mathfrak{in},N+1}) = x_{\mathfrak{in}}, \quad (\mathfrak{c}(T_{\mathfrak{f}}),x_{\mathfrak{f},N+1}) = x_{\mathfrak{f}}.
  		\]
  		There is a control $\zeta \in C^{\infty}_c((0,T_{\mathfrak{f}}); \mathbb{R}^d)$ such that the solution $x$ to \eqref{equation:particlesystem_deterministic} satisfies
  		\begin{equation*}
  			\sup\limits_{t \in [0, T_{\mathfrak{f}}]} \sum_{i = 1}^{N} |\mathfrak{c}_i(t) - x_i(t)|  + |x_{N+1}(T_\mathfrak{f}) - x_{\mathfrak{f}, N+1}| < \varepsilon.
  		\end{equation*}
  	\end{theorem}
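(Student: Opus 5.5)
We discretize the prescribed curve $\mathfrak{c}$ and follow it through a finite sequence of small displacements, each produced by a short, fast ``kick'' of the control particle in the spirit of the heuristic computation in the introduction. Since $\mathfrak{c}$ is continuous with values in $\Delta_N^{\complement}$ and $[0,T_{\mathfrak{f}}]$ is compact, there is $r>0$ with $\min_t\min_{i\neq j}|\mathfrak{c}_i(t)-\mathfrak{c}_j(t)|\geq 4r$. We pick a partition $0=t_0<t_1<\dots<t_M=T_{\mathfrak{f}}$ so fine that $|\mathfrak{c}(t)-\mathfrak{c}(t_k)|<\varepsilon/4$ on each $[t_{k-1},t_k]$. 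Between consecutive nodes we move the particles $x_1,\dots,x_N$ one at a time from $\mathfrak{c}(t_{k-1})$ to $\mathfrak{c}(t_k)$, using only a small fraction of the interval $[t_{k-1},t_k]$ for the kicks.

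\textbf{Building block.} Fix a configuration $y\in\Delta_{N+1}^{\complement}$ whose first $N$ components lie within $r$ of $\mathfrak{c}(t)$, an index $i\le N$, and a displacement $v\in\mathbb{R}^d$ with $|v|$ small. Choose $u$ so that $A_iu/|u|^{p_i}$ is parallel to $v$; this is possible because $A_i$ is invertible and $f_i(0)\neq0$. Next, choose $\alpha_i$ such that $\alpha_i(p_i-1)=1$; then the feedback $x_{N+1}=x_i-\delta^{\alpha_i}u$, held on an interval of length $\beta\delta$, produces a velocity of $x_i$ of order $\delta^{-1}$, and $x_i$ moves by approximately $\beta f_i(0)A_iu|u|^{-p_i}=v$. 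Meanwhile every $x_j$ with $j\neq i$ moves by $O(\delta)$. Indeed, $|x_j-x_{N+1}|\geq r$ during the kick, so $K_j$ is bounded; the terms $\widetilde F_j$ and $\widehat F_j$ are locally Lipschitz, hence bounded, near the configuration. The error in $x_i$ is likewise $O(\delta)+O(\delta^{\alpha_i})$: the Lipschitz bound on $f_i$ gives $f_i(\delta^{\alpha_i}|u|)=f_i(0)+O(\delta^{\alpha_i})$, and the remaining terms are bounded. The feedback is realized by a genuine open-loop control $\zeta$, namely $\zeta=\dot x_i-\delta^{\alpha_i}\dot u-F_{N+1}(x)$, computed along the closed-loop solution. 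Before and after each kick the control particle must travel from near $x_j$ to near $x_i$. It does so along a path of length $O(1)$ that stays at distance at least $r$ from all other particles, which is possible because $d\ge2$; this travel takes time $\delta$ at speed $O(\delta^{-1})$, so the other particles again move only $O(\delta)$. Approaching within $\delta^{\alpha_i}$ of $x_i$ uses a smooth ramp, and the singular interaction during the ramp integrates to a contribution of size $O(1)\cdot$(ramp time). To keep this small, we let the ramp time shrink faster than $\delta$. Finally, we mollify the resulting piecewise-defined $\zeta$ and cut it off near $0$ and $T_{\mathfrak{f}}$, which yields $\zeta\in C_c^\infty((0,T_{\mathfrak{f}});\mathbb{R}^d)$.

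\textbf{Assembling.} We concatenate the $NM$ building blocks. Between kicks the control particle is parked far away (or simply left idle), and during the long idle periods the first $N$ particles drift under $F_i$ plus the bounded influence of $x_{N+1}$. Each idle period has length at most $t_k-t_{k-1}$, and we absorb this drift by choosing the partition finer. A cleaner alternative is to compress all kicks into a window of length $\tau$ at the start of each interval and to correct the drift using the next kick, whose displacement target is recomputed from the actual position. Because all errors are $O(\delta)$ per block and there are only finitely many blocks, with $M$ fixed before $\delta$, taking $\delta\to0$ keeps $x_i(t)$ within $\varepsilon/2$ of $\mathfrak{c}_i(t)$ uniformly in $t$. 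We finish with one final stage on $[T_{\mathfrak{f}}-\tau,T_{\mathfrak{f}}]$ that steers $x_{N+1}$ to $x_{\mathfrak{f},N+1}$ along a path avoiding the others, followed by cutoff.

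\textbf{Main obstacle.} The hard step is the rigorous estimate in the building block that no collapse occurs and that the errors are genuinely $O(\delta)$ uniformly. The solution of the closed-loop system must exist on the kick interval, with $x_i-x_{N+1}$ exactly $\delta^{\alpha_i}u$, and the remaining particles must stay in a fixed compact subset of the complement of the diagonal. I would handle this with a bootstrap argument: assume all particles other than $i$ remain within $r/2$ of their starting points up to a maximal time $\sigma$, derive the $O(\delta)$ bounds from the boundedness of the nonsingular terms on that compact set, and conclude that $\sigma=\beta\delta$. Local Lipschitz continuity of $F_{N+1}$ away from the diagonal makes the feedback control continuous.
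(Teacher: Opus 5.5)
Your proposal is correct and is essentially the paper's argument. The paper also tracks the curve by fast singular kicks $x_{N+1}=x_i-\delta^{\alpha}u$ (\Cref{theorem:auxiliar_approx_particle_1,corollary:approx1} and \Cref{corollary:approx2}); it moves the control particle into and out of kicking position on a time scale $1/l\ll\delta$, controlled by \Cref{theorem:pert}, which plays the role of your fast ramps; and it defeats the uncontrolled drift by re-steering from the actual state to the curve whenever a uniform stability time $s_m$ has elapsed. That last step is your recomputed-kick variant, which is the one to keep: without re-targeting, the per-interval drifts would add up over the $M$ intervals to an error of order $T_{\mathfrak{f}}$. One small slip: during a ramp the control particle comes within distance of order $\delta^{\alpha}$, so the singular force there is $O(\delta^{-1})$, not $O(1)$; this is exactly why the ramp time must be $o(\delta)$, as you in fact require.
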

  	
	The reason that the above theorem only allows to prescribe the trajectories for the first $N$ particles is that $x_{N+1}$ effectively serves as a control agent, hence its evolution cannot be prescribed freely. We note that the method from \cite{DorszGlass2023} seems to provide a related tracking property for the particular case of the point vortex system.

	\subsection{Examples of controlled particle systems}\label{subsubsection:examples}
    Our abstract setting includes the following physically relevant interaction kernels: Biot-Savart, Riesz, Coulomb, and Yukawa. In what follows, we introduce them as examples of the formulation \cref{equation:particlesystem_deterministic,equation:F_i,equation:generalformofsingularkernel}.  Further examples include multi-species systems combining these models and we allow intensity heterogeneity through coefficients $\gamma_1,\dots, \gamma_{N+1}\in\mathbb{R}\setminus\{0\}$.

\paragraph{Biot--Savart kernel (point vortices) and generalizations.}
For this interaction kernel, we restrict to the two-dimensional case $d=2$, since point vortex dynamics are intrinsically planar. The point vortex system can be derived as a limiting model when the initial vorticity of an incompressible inviscid fluid approximates a sum of Dirac masses (e.g., see \cite{Donati2023,MarchioroPulvirenti1994,Newton2001}). Using the notation from \eqref{equation:dynamics}, the uncontrolled dynamics are governed by the energy
\begin{equation}\label{eq:energyPVS}
	\mathcal{H}_{N+1}(x_1,\dots,x_{N+1}) =  - \frac{1}{2}\sum_{1 \leq j \neq i \leq N+1}
	\gamma_i \gamma_j \log |x_i-x_j|
\end{equation}
and matrix
\begin{equation*}
	\mathbb{J} = \bigoplus_{i=1}^{N+1}
	\begin{pmatrix}
		0 & \gamma_i^{-1}\\
		-\gamma_i^{-1} & 0
	\end{pmatrix}.
\end{equation*}
 This provides an important yet simple-to-state example covered by the general
formulation in \cref{equation:particlesystem_deterministic,equation:F_i,equation:generalformofsingularkernel}, in which $x_{N+1}$ plays the role of a
degenerate control for the $N$ particles $x_1,x_2,\dots,x_N$.
In this example,
\[
	K_1(z) = \dots = K_N(z) = \gamma_{N+1}\,\widehat K(z), \boxnote{\qquad \qquad \quad \,\,\, (z \neq 0),}
\]
with
\[
	\widehat{K}(z)=\frac{z^\perp}{|z|^2},
\]
and
\begin{gather*}
	F_i(x) = \sum_{1 \leq j \neq i \leq N}\gamma_j\,\widehat K(x_i-x_j), \quad F_{N+1}(x) = \sum_{j=1}^N \gamma_{j}\widehat{K}(x_{N+1} - x_j), \\	
	A_i =	\begin{pmatrix}
		0 & -1\\
		1 & 0
	\end{pmatrix}, \quad
	f_i\equiv \gamma_{N+1}, \quad p_i = 2
\end{gather*}
for each $1 \leq i \leq N$, where $z^\perp = (-z_2, z_1)^T$ denotes the counterclockwise rotated version of $z = (z_1,z_2)^T \in \mathcal{M}$. At the level of the particle equations, this extends directly to \textit{generalized point vortex systems} by replacing for $\alpha \in (0, 2]$ the Biot-Savart kernel by
\[
	\widehat K_{\alpha}(z) \coloneq \frac{z^\perp}{|z|^{1+\alpha}}.
\]
In this case, one takes
\[
	F_i(x)=\sum_{1 \leq j \neq i \leq N}\gamma_j \widehat K_{\alpha}(x_i-x_j), \quad F_{N+1}(x) = \sum_{j=1}^N \gamma_{j}\widehat K_{\alpha}(x_{N+1} - x_j)
\]
while in \eqref{equation:generalformofsingularkernel} one takes $p_i=1+\alpha$ with the same antisymmetric matrix $A_i$ and scalar factor $f_i$ as in the preceding point vortex formulation.
The choices $\alpha=1$ and $\alpha=2$ correspond, respectively, to the
classical two-dimensional Euler point vortex system and to the
surface quasi-geostrophic point vortex model \cite{CobbDonatiGodard-Cadillac2025}.

\paragraph{Logarithmic kernels (2D log gas).}
Systems of particles with $2$D Coulomb interactions are of the form \eqref{equation:dynamics} with the energy defined in \eqref{eq:energyPVS} and the matrix
\begin{equation*}
	\mathbb{J} = -\bigoplus_{i=1}^{N+1} \frac{1}{\gamma_i} I_2.
\end{equation*}
Like the point vortex example, this can be written in the form \cref{equation:particlesystem_deterministic,equation:F_i,equation:generalformofsingularkernel} by using instead of $\widehat{K}$ defined there the kernel
\[
	\widehat{K}(z) = \frac{z}{|z|^2}.
\]

\paragraph{Riesz kernels.}
Given any $d \geq 2$ and $s\in(0,d)$, the Riesz interaction energy has the form
\[
	\mathcal{H}_{N+1}(x_1,\dots,x_{N+1}) =  \frac{1}{2}\sum \limits_{1 \leq j \neq i \leq N+1}
	 \frac{\gamma_i \gamma_j}{|x_i-x_j|^s},
\]
where 
\begin{equation}\label{equation:matrixRieszGas}
	\mathbb{J} = -\bigoplus_{i=1}^{N+1} \frac{1}{\gamma_i} I_d.
\end{equation}
This corresponds in \cref{equation:particlesystem_deterministic,equation:F_i,equation:generalformofsingularkernel} to the choices
\begin{gather*}
	K_i =  \gamma_{N+1} \widehat{K}, \quad \widehat{K}(z) = \frac{s z}{|z|^{s+2}},  \quad F_i(x) = \sum_{1 \leq j \neq i \leq N}\gamma_j \widehat{K}(x_i-x_j),\\
	F_{N+1}(x) = \sum_{j=1}^N \gamma_{j}\widehat{K}(x_{N+1} - x_j)
\end{gather*}
for $1 \leq i \leq N$. Note that the Coulomb potential appears as the particular case of the Riesz interaction with $s=d-2$ in spatial dimensions $d\geq 3$. For instance, see \cite{Serfaty2018}.

\paragraph{3D Yukawa (screened Coulomb) kernel.}
Let $d = 3$ and $\kappa > 0$. The Yukawa potential \cite{Spohn1991,Yukawa1935} can be viewed as a screened version of the Coulomb potential, where the exponential factor $e^{-\kappa r}$ suppresses the interaction at distances larger than the screening length $\kappa^{-1}$. Particle systems with Yukawa interactions are governed by the energy 
\[
\mathcal{H}_{N+1}(x_1,\dots,x_{N+1}) =  \frac{1}{2}\sum_{1 \leq j \neq i \leq N+1}
 \frac{\gamma_i \gamma_j e^{-\kappa |x_i-x_j|}}{|x_i-x_j|}
\]
and matrix $\mathbb{J}$ defined as in \eqref{equation:matrixRieszGas}. In this example, 
\[
	K_1(z) = \dots = K_N(z) = \gamma_{N+1}\widehat{K},
\]
where
\begin{gather*}
	\widehat{K}(z) = \left(1+ \kappa|z|\right)e^{-\kappa |z|}\frac{z}{|z|^{3}}, \quad F_i(x) = \sum_{1 \leq j \neq i \leq N} \gamma_j \widehat{K}(x_i-x_j), \\
	F_{N+1}(x) = \sum_{j=1}^N \gamma_{j}\widehat{K}(x_{N+1} - x_j).
\end{gather*}
This fits the structure of \cref{equation:particlesystem_deterministic,equation:F_i,equation:generalformofsingularkernel}.
Yukawa potentials in general dimensions do not necessarily have simple representations (they involve Bessel functions).

		\section{Deterministic controllability mechanisms}\label{section:proofofmainresult}
		This section is devoted to the proofs of Theorems~\ref{theorem:maincontroltheorem} and~\ref{theorem:maintrajectorial}. To simplify the exposition, it is assumed that $K_1 = K_2 = \dots = K_N$ in \eqref{equation:particlesystem_deterministic}, which only impacts generic constants and parameters chosen during the proofs. Thus, instead of the notations in \eqref{equation:particlesystem_deterministic} and  \eqref{equation:generalformofsingularkernel}
        with index $i$, we write now $K$, $A$, $f$, and $p$ without index.

		The arguments are organized as follows. First, approximate controllability is shown with arbitrary target for the first particle and \enquote{target = initial state} for the others, meanwhile viewing $x_{N+1}$ as a control that can be chosen. At this point, $x_1$ resembles (up to relabeling) any $x_j$ with $1\leq j\leq N$, the choice $j=1$ being made to simplify the presentation. Next, initial and target states are prescribed for $x_{N+1}$, as well, and an additive control as in \eqref{equation:particlesystem_deterministic} is used. This argument is iterated $N$ times to obtain global approximate controllability for all particles. These results are accompanied by continuity statements for locally defined maps that send targets to controls, allowing to conclude in \Cref{corollary:uacallparticles} uniform approximate controllability (UAC) in the sense of \Cref{definition:uac}. In addition, it is shown that not only the endpoints but the entire particle trajectories of $x_1, \dots, x_N$ can be prescribed approximately; this concludes \Cref{theorem:maintrajectorial}. Finally, solid controllability is inferred from UAC via \Cref{proposition:ucisc} and global exact controllability is obtained as a combination of global approximate controllability and solid controllability.

		\subsection{Approximate controllability}\label{subsection:ap}
		We recast the control system~\eqref{equation:particlesystem_deterministic} as a set of equations for~$N$ particles that are driven by $x_{N+1}$. For small $\delta \in (0,1)$, the position $x_{N+1}^{\delta}$ of the control particle is chosen to have the feedback form
		\begin{equation}\label{equation:fedbackt}
			x_{N+1}^{\delta}(t) = x_1^{\delta}(t) - \delta^{\alpha} u,
		\end{equation}
		where  $\alpha \coloneq 1/(p-1)$ is reciprocal to the order of $K$'s singularity, and the input $u \in \mathbb{S}^{d-1}$ provides the direction  along which the control particle located at $x_{N+1}^{\delta}$ pushes the particle located at $x_1^{\delta}$.

        Due to \eqref{equation:fedbackt} and the unit norm of $u$, the first particle $x^{\delta}_1$ in \eqref{equation:particlesystem_deterministic} satisfies the problem
		\begin{equation}\label{equation:auxiliarysystem_additive}
			\begin{cases}
				\dot{x}_1^{\delta}(t) =  f(\delta^{\alpha}) \delta^{-1} A u + F_1(x^{\delta}_1(t),\dots, x^{\delta}_{N}(t), x_1^{\delta}(t) - \delta^{\alpha} u),\\
				x^{\delta}_1(0) = x_{\mathfrak{in},1}
			\end{cases} 
		\end{equation}
		and $x^{\delta}_2, \dots, x^{\delta}_N$ solve
		\begin{equation}\label{equation:auxiliarysystem_additive2}
			\begin{cases}
				\dot{x}_j^{\delta}(t)  = K(x_j^{\delta}(t)-x_1^{\delta}(t) + \delta^{\alpha} u) + F_j(x^{\delta}_1(t),\dots,x^{\delta}_{N}(t), x_1^{\delta}(t) - \delta^{\alpha} u),\\
				x^{\delta}_j(0) = x_{\mathfrak{in},j},
			\end{cases}
		\end{equation}
		for $2 \leq j \leq N$.
		
		The next theorem allows us to push $x_1$ along a straight line by using a control of the form \eqref{equation:fedbackt}, while keeping the other particles close to their initial positions.  

		\begin{theorem}\label{theorem:auxiliar_approx_particle_1}
			Given any initial configuration $(x_{\mathfrak{in},1}, \dots, x_{\mathfrak{in},N}) \in \Delta_{N}^{\complement}$, target position $x_{\mathfrak{f},1} \in \mathcal{M}$ for the first particle, and $\varepsilon > 0$, there exist $u \in \mathbb{S}^{d-1}$, $\beta > 0$, and $\delta_0 \in (0,1)$ such that for all $\delta \in (0, \delta_0)$ the solution to \eqref{equation:auxiliarysystem_additive} and \eqref{equation:auxiliarysystem_additive2} is defined on the time interval $[0, \beta\delta]$ and satisfies
			\begin{equation}\label{equations:approx}
					\sup_{t \in [0, \beta\delta]} \left(|x^{\delta}_1(t) - x_{\mathfrak{in},1} - t (\beta\delta)^{-1} (x_{\mathfrak{f},1}-x_{\mathfrak{in},1})| + \sum_{i = 2}^N |x_i^{\delta}(t) - x_{\mathfrak{in},i}|\right) < \varepsilon.
			\end{equation}
		\end{theorem}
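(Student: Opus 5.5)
The plan is to pick $u$ and $\beta$ so that the leading singular drift in \eqref{equation:auxiliarysystem_additive} generates exactly the straight-line motion, and then to show by a continuity (bootstrap) argument that every other term contributes only $O(\delta^{\min(\alpha,1)})$ on $[0,\beta\delta]$.

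\emph{Choice of parameters.} Since $\alpha(p-1)=1$, we have $K(\delta^\alpha u)=f(\delta^\alpha)\,\delta^{\alpha(1-p)}Au=f(\delta^\alpha)\delta^{-1}Au$. This is why \eqref{equation:auxiliarysystem_additive} has the form written.

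If $x_{\mathfrak{f},1}\neq x_{\mathfrak{in},1}$, we set
\[
u=\operatorname{sgn}(f(0))\,\frac{A^{-1}(x_{\mathfrak{f},1}-x_{\mathfrak{in},1})}{|A^{-1}(x_{\mathfrak{f},1}-x_{\mathfrak{in},1})|},\qquad \beta=\frac{|x_{\mathfrak{f},1}-x_{\mathfrak{in},1}|}{|f(0)|\,|Au|}.
\]
Then $\beta f(0)Au=x_{\mathfrak{f},1}-x_{\mathfrak{in},1}$. Hence the reference curve $y(t)=x_{\mathfrak{in},1}+t(\beta\delta)^{-1}(x_{\mathfrak{f},1}-x_{\mathfrak{in},1})$ satisfies $\dot y=f(0)\delta^{-1}Au$.

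If $x_{\mathfrak{f},1}=x_{\mathfrak{in},1}$, we take any $u$ and any $\beta>0$. The displacement $\beta|f(0)Au|$ then has to be absorbed into $\varepsilon$, so in this case we take $\beta$ small. (Strictly, the reference curve is constant here, so one should compare with $x_{\mathfrak{in},1}$ directly.)

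\emph{Bootstrap.} Let $r>0$ and consider the compact set $\mathcal{K}_r$ of configurations $(z_1,\dots,z_N)$ with $\operatorname{dist}(z_1,[x_{\mathfrak{in},1},x_{\mathfrak{f},1}])\le r$ and $|z_j-x_{\mathfrak{in},j}|\le r$ for $j\ge2$. Assume for now that the segment $[x_{\mathfrak{in},1},x_{\mathfrak{f},1}]$ stays at distance at least $4r$ from every $x_{\mathfrak{in},j}$, $j\ge 2$, and that the $x_{\mathfrak{in},j}$ are pairwise $4r$-separated.

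On $\mathcal{K}_r$, for $\delta^\alpha<r$, the following quantities are bounded by a constant $C=C(r)$ independent of $\delta$: the functions $\widetilde F_i$, and the functions $\widehat F_i$ evaluated with last argument $z_1-\delta^\alpha u$. The same holds for the cross interactions $K(z_j-z_1+\delta^\alpha u)$, because the control particle $z_1-\delta^\alpha u$ stays at distance at least $r$ from each $z_j$.

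Let $\tau$ be the exit time of the solution from $\mathcal{K}_{\min(r,\varepsilon/2)}$. For $t\le\min(\tau,\beta\delta)$ we get
\[
|x_1^\delta(t)-y(t)|\le \beta\,\mathrm{Lip}(f)\,\delta^{\alpha}|Au|+C\beta\delta,\qquad |x_j^\delta(t)-x_{\mathfrak{in},j}|\le C\beta\delta .
\]
Here we used that $t\,|f(\delta^\alpha)-f(0)|\,\delta^{-1}\le\beta\,\mathrm{Lip}(f)\,\delta^\alpha$. For $\delta<\delta_0$ small, the right-hand sides are below $\varepsilon/(2N)$ and below $r/2$. By continuity this forces $\tau>\beta\delta$. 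Local Lipschitz continuity on the compact set $\mathcal{K}_r\subset\Delta_N^\complement$ then gives existence and uniqueness on $[0,\beta\delta]$, and \eqref{equations:approx} follows.

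\emph{Main obstacle.} The hard step is the geometric non-degeneracy assumption above. If the straight segment passes through (or arbitrarily near) some $x_{\mathfrak{in},j}$, then the singular terms $\widetilde F_1$ and $K(x_j-x_1+\delta^\alpha u)$ are no longer bounded. Moreover, their time integral along a path traversed at speed $\sim\delta^{-1}$ behaves like $\delta\int|s|^{1-p}\,ds$, which is not small (it diverges for $p\ge2$).

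The first thing I would try is to check whether the $\varepsilon$-tolerance absorbs a near-miss. If it does not, I would restrict the statement to pairs for which the segment avoids the other particles. The general target would then be reached by concatenating finitely many such segments through intermediate points, which is possible since $d\ge2$ and the finite set $\{x_{\mathfrak{in},j}\}$ can be circumvented. The iteration over segments is harmless because the estimates above are uniform on compact subsets of $\Delta_N^\complement$.
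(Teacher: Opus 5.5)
You have a genuine gap: your argument covers only the non-collinear case, and your proposed fallback does not close it.

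Your bootstrap is sound, and slightly more direct than the paper's. Once the trajectory is confined to a compact tube where $K(x_j-x_1+\delta^\alpha u)$ and $F_1,\dots,F_N$ are bounded, integrating the bounded right-hand side gives $|r_j^\delta(t)|\le Ct\le C\beta\delta$ at once. The paper's energy estimate and Gr\"onwall step are therefore not needed. The problem is that this works only when the segment $[x_{\mathfrak{in},1},x_{\mathfrak{f},1}]$ avoids $x_{\mathfrak{in},2},\dots,x_{\mathfrak{in},N}$. The theorem makes no such assumption; for instance, $x_{\mathfrak{f},1}$ may coincide with some $x_{\mathfrak{in},j}$. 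The statement is about the system \eqref{equation:auxiliarysystem_additive}--\eqref{equation:auxiliarysystem_additive2} with one fixed direction $u$ and one $\beta$, and it requires tracking the straight line towards $x_{\mathfrak{f},1}$. Concatenating segments with different directions therefore proves a different statement, and restricting the statement is not an option.

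The missing step is the one you hesitated over, and the answer is yes: the $\varepsilon$-tolerance absorbs it. This is Step 1 of the paper's proof.
\begin{itemize}
\item The set of points $\widetilde{x}$ for which $[x_{\mathfrak{in},1},\widetilde{x}]$ contains some $x_{\mathfrak{in},j}$, $j\ge 2$, is a union of finitely many rays from $x_{\mathfrak{in},1}$. Since $d\ge 2$, this set has empty interior.
\item So you can pick $\widetilde{x}_{\mathfrak{f},1}\neq x_{\mathfrak{in},1}$ with $|\widetilde{x}_{\mathfrak{f},1}-x_{\mathfrak{f},1}|<\varepsilon/2$ whose segment avoids all other initial positions.
\item Choose $u,\beta$ with $\beta f(0)Au=\widetilde{x}_{\mathfrak{f},1}-x_{\mathfrak{in},1}$ and run your bootstrap with tolerance $\varepsilon/2$.
\item Conclude by the triangle inequality, since $|t(\beta\delta)^{-1}(\widetilde{x}_{\mathfrak{f},1}-x_{\mathfrak{f},1})|<\varepsilon/2$ for all $t\in[0,\beta\delta]$.
\end{itemize}
This perturbation also removes the need for your separate treatment of $x_{\mathfrak{f},1}=x_{\mathfrak{in},1}$.

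A near miss is also harmless. If the segment stays at a fixed distance $\rho>0$ from the other particles, the singular terms are bounded by a constant depending only on $\rho$ and contribute $O(\delta)$. Your divergent $\delta\int|s|^{1-p}\,ds$ arises only from an exact hit, which the perturbed target excludes.
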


		\begin{proof}
			The idea is as follows. First, $x_{\mathfrak{f},1}$ is slightly perturbed (if necessary) so that the resulting target $\widetilde{x}_{\mathfrak{f},1}$ is connected with (but not equal to) $x_{\mathfrak{in},1}$ by a line segment that avoids the initial positions of all other particles. Steering $x_1$ through a tubular neighborhood of a straight path is the central idea of the argument. To this end, a direction $u \in \mathbb{S}^{d-1}$ and a parameter $\beta > 0$ are chosen such that the line segment $x_{\mathfrak{in},1} + t f(0) A u$ parametrized by $t \in [0,\beta]$ connects~$x_{\mathfrak{in},1}$ with~$\widetilde{x}_{\mathfrak{f},1}$. Due to the singular scaling of the kernel, placing the control particle at distance~$\delta^{\alpha}$ from~$x_1$ generates a force of order~$\delta^{-1}$, so that over the short time interval $[0,\beta\delta]$ the first particle moves an~$O(1)$ distance along the chosen segment, whereas all others experience~$O(\delta)$ perturbations. A bootstrap argument on a maximal time interval ensures that the geometric separation persists for sufficiently small~$\delta$, yielding the desired approximation.

			\begin{figure}[ht!]
				\centering
				\resizebox{0.5\textwidth}{!}{
					\begin{tikzpicture}
						\clip(0.7,1.8) rectangle (6.5,6.5);
						
						\draw[line width=0.2mm, color=SteelBlue] plot[smooth cycle] (5,5) circle (1.2);
						\draw[line width=0.2mm,color=SteelBlue] (5,5.1) -- (5,6.2);
						
						\draw[->, line width=0.5mm,color=FireBrick] (1,1+1) -- (4.92,4.92);
						\draw[->, line width=0.5mm,color=SeaGreen] (1,1+1) -- (4.52,4.92);
						
						\fill[color=black, fill=black] plot[smooth cycle] (1,1+1) circle (0.1);
						\draw[color=black] plot[smooth cycle] (5,5) circle (0.1);
						\draw[color=black, ] plot[smooth cycle] (4.6,5) circle (0.1);
						
						\fill[color=black, fill=black] plot[smooth cycle] (2,3+1) circle (0.1);
						\fill[color=black, fill=black] plot[smooth cycle] (0.8,1.8+1) circle (0.1);
						\fill[color=black, fill=black] plot[smooth cycle] (3.7,3.7+1) circle (0.1);
						\fill[color=black, fill=black] plot[smooth cycle] (5.5,2.5) circle (0.1);
						\fill[color=black, fill=black] plot[smooth cycle] (1.5,4.3+1) circle (0.1);
						\fill[color=black, fill=black] plot[smooth cycle] (2.9,2.2+1.2) circle (0.1);
						\fill[color=black, fill=black] plot[smooth cycle] (4.9,4.3) circle (0.1);

						\coordinate[label=right:\footnotesize{$x_{\mathfrak{in},1}$}] (Start) at (1,0.9+1);
						\coordinate[label=right:\footnotesize{$x_{\mathfrak{in},i}$}] (Start) at (2.85,2.2+1);
						\coordinate[label=right:\footnotesize{\color{SteelBlue}$\varepsilon$}] (Start) at (4.93,5.6);
						\coordinate[label=right:{\footnotesize$x_{\mathfrak{f},1}$}] (End) at (5.02,4.95);
						\coordinate[label=left:{\footnotesize$\widetilde{x}_{\mathfrak{f},1}$}] (End2) at (4.63,5.15);
						
						\coordinate[label={[rotate=37]left:{\color{SeaGreen}\footnotesize$\beta f(0)Au$}}] (End3) at (2.9,3.8);
						
					\end{tikzpicture}
				}
				\caption{The black points indicate the initial configuration in \Cref{theorem:auxiliar_approx_particle_1}, while the white circled points represent the original and perturbed final configuration for the first particle, respectively. The final positions for the other particles are chosen identical to their initial positions. The position $\widetilde{x}_{\mathfrak{f},1}$ is chosen in the $\varepsilon/2$-neighborhood of ${x}_{\mathfrak{f},1}$ such that ${x}_{\mathfrak{in},1}$ and $\widetilde{x}_{\mathfrak{f},1}$ lie on a straight line segment that avoids $x_{\mathfrak{in},i}$ for all $2 \leq i \leq N$. The (red) arrow, which starts at $x_{\mathfrak{in},1}$ and ends at $x_{\mathfrak{f},1}$ crosses the initial position of another particle, indicating that pushing $x_1$ in this direction arbitrarily fast could produce a collision. The (green) arrow, which starts at $x_{\mathfrak{in},1}$ and ends at $\widetilde{x}_{\mathfrak{f},1}$ indicates the vector $\beta f(0)Au$ used in the proof of \Cref{theorem:auxiliar_approx_particle_1}, where $\beta$ and $u$ are chosen as explained in the context of \eqref{equation:assumption}. } 
				\label{Figure:Line}
			\end{figure}

			{\it Step 1. Perturbed target.} A modified target $\widetilde{x}_{\mathfrak{f},1} \in \mathcal{M}$ is fixed such that there exist $u \in \mathbb{S}^{d-1}$ and $\beta > 0$ satisfying the geometric conditions
			\begin{equation}\label{equation:assumption}
				\begin{gathered}
					\beta f(0) A u = \widetilde{x}_{\mathfrak{f},1} - x_{\mathfrak{in},1},\\
					\forall i \in \{2,\dots,N\},  \, \forall s \in [0, \beta]\colon x_{\mathfrak{in},i} \neq x_{\mathfrak{in},1} + s f(0) A u
				\end{gathered}
			\end{equation}
			and it holds
			\begin{equation}\label{equation:mte}
				|\widetilde{x}_{\mathfrak{f},1} - x_{\mathfrak{f},1}| < \varepsilon/2.
			\end{equation}
			Thus, the line segment $\{x_{\mathfrak{in},1} + s f(0) A u\}_{s \in [0,\beta]}$ connects $x_{\mathfrak{in},1}$ with $\widetilde{x}_{\mathfrak{f},1}$ avoiding the points $x_{\mathfrak{in},2}, \dots, x_{\mathfrak{in},N}$ (see \Cref{Figure:Line}). This choice of $\widetilde{x}_{\mathfrak{f},1}$, $u$, and $\beta$ is possible due to the invertibility of $f(0) A$ and $d \geq 2$.

			{\it Step 2. Local existence time.} The Cauchy-Lipschitz theorem provides for any $\delta > 0$ a maximal time $T_{\delta} \in [0, \beta\delta]$ such that the solution $x^{\delta}$ to \eqref{equation:auxiliarysystem_additive} and \eqref{equation:auxiliarysystem_additive2} is well-defined on $[0, T_{\delta})$. If $T_{\delta} < \beta \delta$, then either a blow-up or a collapse occurs at time~$T_{\delta}$, thus one has (at least) one of the following events:
			\begin{itemize}
				\item $\liminf\limits_{t \nearrow T_{\delta}}\sum_{i=1}^N|x_i^{\delta}(t)| = +\infty$;
				\item $\exists (j, l) \in \{1, \dots, N\}^2\colon j \neq l \, \wedge \, \liminf\limits_{t \nearrow T_{\delta}}|x_j^{\delta}(t) - x_l^{\delta}(t)| = 0$;
				\item $\exists i \in \{2, \dots, N\} \colon \liminf\limits_{t \nearrow T_{\delta}}|x_i^{\delta}(t) - x_1^{\delta}(t)+\delta^{\alpha}u| = 0$.
			\end{itemize}
			Thus, proving \eqref{equations:approx} involves showing that for sufficiently small $\delta > 0$ these scenarios are impossible with $T_{\delta} \leq \beta\delta$. Without loss of generality (up to reducing~$\delta$), let $\beta \delta \leq 1$.
			
			{\it Step 3. Asymptotic expansions.} To show \eqref{equations:approx},  the following ansatz is made for the solutions $x_1^{\delta}, \dots, x_N^{\delta}$ to \eqref{equation:auxiliarysystem_additive} and \eqref{equation:auxiliarysystem_additive2}:
			\begin{equation}\label{equation:ansatz}
				\begin{cases}
					x^{\delta}_1(t) = x_{\mathfrak{in},1} + t f(\delta^{\alpha}) \delta^{-1} A u + r^{\delta}_1(t),\\
					x^{\delta}_i(t) = x_{\mathfrak{in},i} + r^{\delta}_i(t), 
				\end{cases} \boxnote{\qquad \,\,\, (2 \leq i \leq N)}
			\end{equation}
			where the remainders $r_1^{\delta}, \dots, r_N^{\delta}$ are well-defined on $[0, T_{\delta})$. 
			Inserting \eqref{equation:ansatz} into \eqref{equation:auxiliarysystem_additive} and \eqref{equation:auxiliarysystem_additive2}, one finds that~$r^{\delta}_1, \dots,r^{\delta}_N$ satisfy on $[0, T_{\delta})$ the equations
			\begin{equation}\label{equation:remainder1}
				\begin{cases}
					\begin{aligned}
						\dot{r}^{\delta}_1(t) & = F_1(x_{\mathfrak{in},1} + t f(\delta^{\alpha}) \delta^{-1} A u + r^{\delta}_1(t), x_{\mathfrak{in},2} + r^{\delta}_2(t), \dots),\\
						\dot{r}^{\delta}_i(t) & = K(x_{\mathfrak{in},i} + r^{\delta}_i(t)-x_{\mathfrak{in},1} - t f(\delta^{\alpha}) \delta^{-1} A u - r^{\delta}_1(t) + \delta^{\alpha} u) \\
						& \quad \, + F_i(x_{\mathfrak{in},1} + t f(\delta^{\alpha}) \delta^{-1} A u + r^{\delta}_1(t), x_{\mathfrak{in},2} + r^{\delta}_2(t), \dots)
					\end{aligned}
				\end{cases}
			\end{equation}
			for $2 \leq i \leq N$, and it holds
			\begin{equation}\label{equation:initialcondition_remainders}
				r^{\delta}_1(0) = \dots = r^{\delta}_N(0) = 0.
			\end{equation}
			Here, the expressions $F_i(x_{\mathfrak{in},1} + t f(\delta^{\alpha}) \delta^{-1} A u + r^{\delta}_1(t), x_{\mathfrak{in},2} + r^{\delta}_2(t), \dots)$ in \eqref{equation:remainder1} abbreviate
			\begin{equation}\label{equation:la}
				\begin{multlined}
					F_i(x_{\mathfrak{in},1} + t f(\delta^{\alpha}) \delta^{-1} A u + r^{\delta}_1(t), x_{\mathfrak{in},2} + r^{\delta}_2(t), \dots, \\
					x_{\mathfrak{in},N} + r^{\delta}_N(t), x_{\mathfrak{in},1} + t f(\delta^{\alpha}) \delta^{-1} A u + r^{\delta}_1(t) - \delta^{\alpha} u)
				\end{multlined}
			\end{equation}
			for $1 \leq i \leq N$. In particular, the argument $x_{\mathfrak{in},1} + t f(\delta^{\alpha}) \delta^{-1} A u + r^{\delta}_1(t) - \delta^{\alpha} u$ in \eqref{equation:la} enters by  \eqref{equation:F_i} only into $\widehat{F}_1, \dots, \widehat{F}_N$ and thus contributes merely a generic constant to the below estimates.
			
			{\it Step 4. Existence of $x^{\delta}$ up to time $\beta\delta$.} For the sake of applying a bootstrap argument, a uniform separation distance from the singularities in \eqref{equation:remainder1} is chosen at the initial time $t=0$. More precisely, using \eqref{equation:assumption}, two numbers $b, c \in (0,1)$ are fixed such that
			\begin{equation}\label{equation:definition_b}
				\begin{gathered}
					|x_{\mathfrak{in},l}| \leq (2b)^{-1},\\
					2b \leq |x_{\mathfrak{in},l}-x_{\mathfrak{in},j}|,\\
					2b \leq |x_{\mathfrak{in},i} - x_{\mathfrak{in},1} - \widetilde{\beta} f({\widetilde{c}}^{\alpha}) A u|, \\
					2b \leq |x_{\mathfrak{in},i} - x_{\mathfrak{in},1} - \widetilde{\beta} f({\widetilde{c}}^{\alpha}) A u + \widetilde{c}^{\alpha} u|
				\end{gathered}
			\end{equation}
			hold for all $\widetilde{c} \in (0,c)$,  $\widetilde{\beta} \in [0,\beta]$, and $1\leq i,j,l \leq N$ with $j \neq l$ and $i \neq 1$. In particular, the numbers $b$ and $c$ depend only on the fixed objects $A$,~$f$, $\beta$, ${x}_{\mathfrak{in},1}, \dots, x_{\mathfrak{in},N}$, and $x_{\mathfrak{f},1}$.
			For each $\delta \in (0,c)$, there is a maximal time  $\widetilde{T}_{\delta} \in (0, T_{\delta}]$ so that 
			\begin{equation}\label{equation:definition_tildeT}
				\begin{gathered}
					|x_{\mathfrak{in},l} + r^{\delta}_l(t)| \leq b^{-1}, \\
					b \leq |x_{\mathfrak{in},l} + r^{\delta}_l(t)-x_{\mathfrak{in},j} - r^{\delta}_j(t)|, \\
					b \leq |x_{\mathfrak{in},i} + r^{\delta}_i(t) -x_{\mathfrak{in},1} - t \delta^{-1} f(\delta^{\alpha}) A u - r^{\delta}_1(t)|, \\
					b \leq |x_{\mathfrak{in},i} + r^{\delta}_i(t) - x_{\mathfrak{in},1} - t \delta^{-1} f(\delta^{\alpha}) A u - r^{\delta}_1(t) + \delta^{\alpha} u|
				\end{gathered}
			\end{equation}
			for all $t \in [0, \widetilde{T}_{\delta}]$ and $1 \leq i,j,l \leq N$ with $j \neq l$ and $i \neq 1$.  The existence of this maximal time is ensured by \eqref{equation:initialcondition_remainders} and the continuity of $x^{\delta}_1,\dots,x^{\delta}_N$ on $[0, T_{\delta})$. In particular, the assumptions on $K$ and $F_1, \dots, F_N$ allow us to infer that
            \begin{equation}\label{KFbounds}
				\begin{aligned}
					\widetilde{C} & \coloneq \sup\limits_{\substack{2\leq i\leq N, \\ \delta \in (0,c), \\t \in [0, \widetilde{T}_{\delta}]}} |K(x_{\mathfrak{in},i} + r^{\delta}_i(t)-x_{\mathfrak{in},1} - t f(\delta^{\alpha}) \delta^{-1} A u - r^{\delta}_1(t) + \delta^{\alpha} u)|\\
					& \quad + \sup\limits_{\substack{\delta \in (0,c), \\t \in [0, \widetilde{T}_{\delta}]}} \, \sum_{i=1}^N|F_i(x_{\mathfrak{in},1} + t f(\delta^{\alpha}) \delta^{-1} A u + r^{\delta}_1(t), x_{\mathfrak{in},2} + r^{\delta}_2(t), \dots)| \\
					& < \infty.
				\end{aligned}
			\end{equation}
			
			Now, the equations in \eqref{equation:remainder1} are multiplied (in the sense of inner products) by $r^{\delta}_1, \dots, r^{\delta}_N$, respectively. Then, \cref{equation:definition_tildeT,KFbounds} yield
            \begin{equation}\label{equation:est_}
           	\begin{gathered}
				\frac{ d}{{ d}t}\sum_{i = 1}^N|r^{\delta}_i(t)|^2 \leq 2\widetilde{C} (N^{-1}B)^{-1/2}(N^{-1}B)^{1/2}\sum_{i = 1}^N|r^{\delta}_i(t)|
			\end{gathered}
            \end{equation}
			for any $B > 0$. Below, an explicit choice for $B$ will be made independently of $\delta$.
			Integrating over $[0,t]$ and  using the quadratic inequality $2lr \leq l^2+r^2$ provides for $t \in [0, \widetilde{T}_{\delta}]\subset [0,1]$ the estimate
			\begin{equation}\label{equation:bee}
				\sum_{j=1}^N |r^{\delta}_j(t)|^2 \leq q(B) \int_0^t \sum_{j=1}^N |r^{\delta}_j(s)|^2 \, ds + \frac{B}{2},
			\end{equation}
			where
			\begin{equation*}
					q(B) \coloneq 2NB^{-1} \widetilde{C}^2.
			\end{equation*}
			As $\widetilde{T}_{\delta} \in (0, \beta \delta]$, an application of Gr\"onwall's inequality subsequently yields
			\begin{equation}\label{equation:gwe} 
				\max\limits_{t \in [0, \widetilde{T}_{\delta}]}\sum_{j=1}^N |r^{\delta}_j(t)|^2 \leq \frac{B}{2} \exp\left(\delta \beta q(B)\right) \longrightarrow \frac{B}{2}
			\end{equation}
			as $\delta \longrightarrow 0$. Hence, when $\delta > 0$ is sufficiently small, it holds \smash{$\sum_{j=1}^N |r^{\delta}_j(t)|^2 < B$} for all $t \in [0, \widetilde{T}_{\delta}]$.
			
			As a consequence, there exists \smash{$\widetilde{\delta}_0 \in (0,c)$} so that for all \smash{$\delta \in (0, \widetilde{\delta}_0)$} one has \smash{$\widetilde{T}_{{\delta}} = \beta{\delta}$} and
			\begin{equation}\label{equation:gwe2}
				 \max\limits_{t \in [0, \beta\delta]}\sum_{j=1}^N |r^{\delta}_j(t)|^2 < B.
			\end{equation}
			Indeed, by contradiction assume that for any \smash{$\widetilde{\delta}_0 \in (0, c)$} there exists \smash{${\delta} \in (0, \widetilde{\delta}_0)$} with \smash{$\widetilde{T}_{{\delta}} < \beta{\delta}$}. Then, the choice \smash{$t=\widetilde{T}_{{\delta}}$} produces at least one equality sign in \eqref{equation:definition_tildeT}. Consequently, by taking in \eqref{equation:est_} the number
			\begin{equation}\label{equation:B}
				B \in \left(0, \min\left\{\frac{b^2}{4}, \frac{\varepsilon^2}{3N}\right\}\right),
			\end{equation}
			at least one of the following contradictions arises due to \cref{equation:gwe,equation:definition_b}:
				\begin{gather*}
					b^{-1} = |x_{\mathfrak{in},l} + r^{\delta}_l(\widetilde{T}_{\delta})| \leq b^{-1}/2 + \sqrt{B} < b^{-1},\\
					b = |x_{\mathfrak{in},l} + r^{\delta}_l(\widetilde{T}_{\delta})-x_{\mathfrak{in},j} - r^{\delta}_j(\widetilde{T}_{\delta})| \geq 2b- 2 \sqrt{B} > b,\\
					b = |x_{\mathfrak{in},i} + r^{\delta}_i(\widetilde{T}_{\delta}) -x_{\mathfrak{in},1} - 	\widetilde{T}_{\delta} f(\delta^{\alpha}) \delta^{-1} A u - r^{\delta}_1(\widetilde{T}_{\delta})| \geq 2b- 2 \sqrt{B } > b,\\
					b = |x_{\mathfrak{in},i} + r^{\delta}_i(\widetilde{T}_{\delta}) - x_{\mathfrak{in},1} - \widetilde{T}_{\delta} f(\delta^{\alpha}) \delta^{-1} A u - r^{\delta}_1(\widetilde{T}_{\delta}) + \delta^{\alpha} u| \geq 2b- 2 \sqrt{B} > b.
				\end{gather*}
			This shows that \eqref{equation:gwe2} holds with $B$ independent of ${\delta} \in (0, \widetilde{\delta}_0)$, and by further reducing $\widetilde{\delta}_0$ we can additionally assume that $\beta \|A\| |f({\delta}^{\alpha}) - f(0)| < B$ for all ${\delta} \in (0, \widetilde{\delta}_0)$. In particular, \cref{equation:assumption,equation:ansatz,equation:mte,equation:gwe2,equation:B} imply the existence of $\delta_0 \in (0,1)$ such that \eqref{equations:approx} holds for all $\delta \in (0,\delta_0)$.
		\end{proof}

		In the previous theorem, $x_{N+1}^{\delta}$ is treated as control input. The next goal is to study the formulation \eqref{equation:particlesystem_deterministic}, where $x_{N+1}^{\delta}$ solves itself a differential equation that is driven by an additive input $\zeta$.  
        To this end, assume that $x_{N+1}^{\delta}$ has been fixed via \Cref{theorem:auxiliar_approx_particle_1} such that $x^{\delta}_1, \dots, x^{\delta}_N$ satisfy
		\begin{equation}\label{equation:dsnp1}
			\begin{cases}
				\dot{x}^{\delta}_i = K(x^{\delta}_i-x^{\delta}_{N+1}) + F_i(x^{\delta}),\\
				\dot{x}^{\delta}_{N+1} = F_{N+1}(x^{\delta}) + \zeta^{\delta},
			\end{cases}
		\end{equation}
		where $1 \leq i \leq N$, $x^{\delta}= (x^{\delta}_1, \dots, x^{\delta}_{N+1})$, and $\zeta^{\delta} \coloneq \dot{x}^{\delta}_{N+1} -F_{N+1}(x^{\delta})$.
		
		To resolve the issue that  $x_{N+1}^{\delta}$ obtained via
		\Cref{theorem:auxiliar_approx_particle_1}  might not satisfy prescribed initial- and target conditions, a perturbation argument is employed as follows.

		\begin{theorem}\label{corollary:approx1}
			Given $x_{\mathfrak{in}} = (x_{\mathfrak{in},1}, \dots, x_{\mathfrak{in},N+1})\in \Delta_{N+1}^{\complement}$, $x_{\mathfrak{f},1}, x_{\mathfrak{f},N+1} \in \mathcal{M}$ with \smash{$(x_{\mathfrak{f},1}, x_{{\mathfrak{in}},2}, \dots, x_{{\mathfrak{in}},N}, x_{\mathfrak{f},N+1})\in \Delta_{N+1}^{\complement}$}, and $\varepsilon > 0$, there are \smash{$\beta > 0$}, $\delta_0 \in (0,1)$, and for any $\delta \in (0,\delta_0)$ a control $\zeta \in C^{0}([0,\beta\delta]; \mathbb{R}^d)$ such that the solution~$x$ to~\eqref{equation:particlesystem_deterministic}           
			exists on $[0, \beta\delta]$ and satisfies
			\begin{equation}\label{equations:approx_cor}
				\begin{gathered}
					\sup_{t \in [0, \beta\delta]} \left(|x_1(t) - x_{\mathfrak{in},1} - t (\beta\delta)^{-1} (x_{\mathfrak{f},1}-x_{\mathfrak{in},1})| + \sum_{i = 2}^N |x_i(t) - x_{\mathfrak{in},i}|\right) < \varepsilon,\\
					x_{N+1}(\beta\delta) = x_{\mathfrak{f}, N+1}, \quad \dot{x}_{N+1}(0) = \dot{x}_{N+1}(\beta \delta) = 0.
				\end{gathered}
			\end{equation}
		\end{theorem}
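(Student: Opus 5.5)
The plan is to build the trajectory of the control particle $x_{N+1}$ directly as a $C^1$ curve on $[0,\beta\delta]$ with vanishing derivative at both endpoints, and then to \emph{define} the control a posteriori by $\zeta \coloneq \dot{x}_{N+1} - F_{N+1}(x)$, exactly as in \eqref{equation:dsnp1}. This $\zeta$ is continuous as long as $x$ avoids $\Delta_{N+1}$ on $[0,\beta\delta]$, since $F_{N+1}$ is continuous there. The interval $[0,\beta\delta]$ is split into three phases: an \emph{approach phase} $[0,\delta^2]$, a \emph{pushing phase} $[\delta^2,\beta\delta-\delta^2]$, and a \emph{departure phase} $[\beta\delta-\delta^2,\beta\delta]$. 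The parameters $u\in\mathbb{S}^{d-1}$, $\beta$ and the separation constants $b,c$ are chosen as in Steps~1 and~4 of the proof of \Cref{theorem:auxiliar_approx_particle_1}, with $\varepsilon$ replaced by $\varepsilon/2$. In the pushing phase, $x_{N+1}$ is given by the feedback \eqref{equation:fedbackt}. At the phase junctions, $x_{N+1}$ is glued by a smooth time cutoff $\theta$ that interpolates between the feedback position and the prescribed path, so that $x_{N+1}\in C^1$.

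\emph{Approach phase.} Fix a smooth path $\gamma_{\mathrm{in}}\colon[0,1]\to\mathcal{M}$ from $x_{\mathfrak{in},N+1}$ to $x_{\mathfrak{in},1}-c^{\alpha}u$ that stays at distance $\geq b$ from $x_{\mathfrak{in},2},\dots,x_{\mathfrak{in},N}$ and from $x_{\mathfrak{in},1}$, except near its end, where it approaches $x_{\mathfrak{in},1}$ along the ray $-u$. This is possible since $d\geq 2$. On $[0,\delta^2]$, follow $\gamma_{\mathrm{in}}$ reparametrized, with zero initial velocity, and finish by sliding along the ray to the feedback position $x_1(t)-\delta^{\alpha}u$.

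Throughout, $|x_{N+1}-x_1|\gtrsim\delta^{\alpha}$, so by \eqref{equation:generalformofsingularkernel} we have $|K(x_1-x_{N+1})|\lesssim \delta^{-1}$. Over a time of length $\delta^2$, $x_1$ therefore moves by $O(\delta)$. The particles $x_2,\dots,x_N$ feel bounded forces, because the path keeps distance $\geq b$ from them, and so they move by $O(\delta^2)$. A bootstrap on a maximal time interval, as in Step~4 of \Cref{theorem:auxiliar_approx_particle_1}, makes this rigorous and excludes collapse and blow-up.

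\emph{Pushing phase.} Here I would not reapply \Cref{theorem:auxiliar_approx_particle_1} verbatim, because its initial datum is now $\delta$-dependent. Instead, I would rerun its Step~3 and Step~4 argument with the ansatz \eqref{equation:ansatz} started at time $\delta^2$, with remainders satisfying $|r^{\delta}_i(\delta^2)|=O(\delta)$ in place of \eqref{equation:initialcondition_remainders}. The separation constraints \eqref{equation:definition_b} hold with margin $2b$, so the estimates \eqref{equation:bee}--\eqref{equation:gwe2} still give $\sum_i|r_i^\delta|^2<B$ for small $\delta$; the only change is an extra additive $O(\delta^2)$ term. The time shift by $\delta^2$ at each end changes the linear profile $x_{\mathfrak{in},1}+t(\beta\delta)^{-1}(\widetilde{x}_{\mathfrak{f},1}-x_{\mathfrak{in},1})$ only by $O(\delta)$, uniformly in $t$.

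\emph{Departure phase.} Mirror the approach phase. Withdraw $x_{N+1}$ from distance $\delta^{\alpha}$ along the ray $-u$, then follow a path to $x_{\mathfrak{f},N+1}$ that avoids the $b$-neighbourhoods of $x_{\mathfrak{in},2},\dots,x_{\mathfrak{in},N}$ and of $\widetilde{x}_{\mathfrak{f},1}$, arriving with zero velocity. Such a path exists because $(x_{\mathfrak{f},1},x_{\mathfrak{in},2},\dots,x_{\mathfrak{f},N+1})\in\Delta_{N+1}^\complement$, after shrinking $b$ and taking $\widetilde{x}_{\mathfrak{f},1}$ close enough to $x_{\mathfrak{f},1}$. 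The same $O(\delta)$ bounds hold. Summing the three phases yields \eqref{equations:approx_cor} with error $\varepsilon/2+O(\delta)<\varepsilon$ for all $\delta<\delta_0$, together with $x_{N+1}(\beta\delta)=x_{\mathfrak{f},N+1}$ and $\dot{x}_{N+1}(0)=\dot{x}_{N+1}(\beta\delta)=0$.

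The main obstacle is the approach and departure phases. There the control particle is at distance of order $\delta^{\alpha}$ from $x_1$, but not yet in the exact feedback configuration, so the near-singular force on $x_1$ has an uncontrolled direction. One must make sure that the total $O(\delta)$ displacement of $x_1$ estimated above is uniform in $\delta$, and that $x_{N+1}$ never comes closer than order $\delta^{\alpha}$ to $x_1$, in particular during the cutoff interpolation. To guarantee the latter, I would parametrize the final approach by the distance $\rho(t)\in[\delta^{\alpha},c^{\alpha}]$ along the ray, setting $x_{N+1}(t)=x_1(t)-\rho(t)u$ there. Then $|x_1-x_{N+1}|=\rho\geq\delta^{\alpha}$ holds by construction, even though $x_1$ itself is moving.
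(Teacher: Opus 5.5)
Your proposal is correct in outline, but it takes a genuinely different route from the paper.

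\emph{The paper's route.} It never splits $[0,\beta\delta]$ into $\delta$-dependent phases. It takes the feedback trajectory of \Cref{theorem:auxiliar_approx_particle_1} on the whole interval, whose control particle starts at $x_{\mathfrak{in},1}-\delta^{\alpha}u$ rather than at $x_{\mathfrak{in},N+1}$. Then, for \emph{fixed} $\delta$, it replaces $x^{\delta}_{N+1}$ by $x^{\delta}_{N+1}+\chi_l$. The correction $\chi_l$ is supported on $[0,1/l]\cup[\beta\delta-1/l,\beta\delta]$ and enforces the prescribed endpoint positions with vanishing endpoint velocities. It is built with \Cref{lemma:localcurve} so that the corrected control particle stays outside the $\delta^{\alpha}/2$-tubes $G_a,G_b$ around the other trajectories. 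There $K$ is Lipschitz (with a $\delta$-dependent constant) and $\chi_l\to0$ in $L^1$, so the time-integrated discrepancy of the vector fields vanishes as $l\to\infty$. The stability result \Cref{theorem:pert}, applied on $[0,t_0]$, $[t_0,\beta\delta-t_0]$ and $[\beta\delta-t_0,\beta\delta]$, then gives uniform closeness, and $l=l(\delta)$ is fixed.

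\emph{Your route.} You tie the transition time to $\delta$ (any $o(\delta)$ would do). You control the transitions quantitatively via $|K|\lesssim\delta^{-1}$, secured by keeping $|x_{N+1}-x_1|\geq\delta^{\alpha}$ through the ray parametrization. Along the ray the force on $x_1$ is $f(\rho)\rho^{1-p}Au$, so its direction is not actually uncontrolled and only its size matters.

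The price is that you must rerun the bootstrap of Steps~3--4 with $O(\delta)$ initial remainders and build the approach/withdrawal geometry by hand. In particular:
\begin{itemize}
\item The separation conditions you need near $x_{\mathfrak{in},N+1}$, along the withdrawal ray at $\widetilde{x}_{\mathfrak{f},1}$, and near $x_{\mathfrak{f},N+1}$ are not literally contained in \eqref{equation:definition_b} and must be added.
\item At the start of the departure phase, $x_1$ is only known to lie within $\sqrt{B}$ of $\widetilde{x}_{\mathfrak{f},1}$. So either take $\sqrt{B}\ll c^{\alpha}$ (allowed, since $B$ is chosen after $b,c$), or construct the departure path relative to the actual position $x_1(\beta\delta-\delta^2)$.
\end{itemize}

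\emph{Trade-off.} Your version is self-contained, with explicit $O(\delta)$ errors and no second limiting parameter. The paper's version uses \Cref{theorem:auxiliar_approx_particle_1} verbatim and absorbs all transition effects into a soft perturbation argument. Its ingredients (\Cref{lemma:localcurve}, \Cref{theorem:pert}) also feed directly into the continuity statement of \Cref{corollary:approx1cont}.
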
 
		\begin{proof} 
		By \Cref{theorem:auxiliar_approx_particle_1}, there are $\beta > 0$, and $\delta_0 \in (0,1)$ so that for any $\delta \in (0, \delta_0)$ a controlled solution $x^{\delta} = (x_1^{\delta}, \dots, x_{N+1}^{\delta})$ to \eqref{equation:dsnp1}, where $x_{N+1}^{\delta}$ is determined through \cref{equation:fedbackt,equation:auxiliarysystem_additive,equation:auxiliarysystem_additive2} with initial states $x^{\delta}_1(0) = x_{\mathfrak{in},1}, \dots, x^{\delta}_{N}(0) = x_{\mathfrak{in},N}$, exists on $[0, \beta\delta]$ and satisfies
			\begin{equation}\label{equations:approx_corred}
				\begin{gathered}
					\sup_{t \in [0, \beta\delta]} \left(|x^{\delta}_1(t) - x_{\mathfrak{in},1} - t (\beta\delta)^{-1} (x_{\mathfrak{f},1}-x_{\mathfrak{in},1})| + \sum_{i = 2}^N |x_i^{\delta}(t) - x_{\mathfrak{in},i}|\right) < \varepsilon.
				\end{gathered}
			\end{equation}
			However, it is possible that $x_{N+1}^{\delta}(0)\neq x_{\mathfrak{in}, N+1}$ or  $x_{N+1}^{\delta}(\beta\delta)\neq x_{\mathfrak{f}, N+1}$. Therefore, the goal of the subsequent steps is to correct the endpoint values of $x_{N+1}^{\delta}$ without notably affecting the behavior of the first $N$ particles, which already behave as desired. During the proof, the value of $\delta_0$ might be further reduced several times (if necessary) to ensure that all constructions are well-defined.
			
			{\it Step 1. Idea of endpoint corrections.} Let $\delta \in (0, \delta_0)$. To adjust the initial and final values of $x_{N+1}^{\delta}$, we construct a family of correction profiles $(\chi_l)_{l \in \mathbb{N}} \subset C^{\infty}([0, \beta\delta]; \mathcal{M})$, satisfying
			\begin{equation}\label{equation:chillimit}
				\lim\limits_{l\to\infty}\|\chi_l\|_{L^2((0,\beta\delta); \mathcal{M})} = 0
			\end{equation}
			and
			\begin{equation}\label{equation:reqchi1}
					\begin{gathered}
						\frac{d^s \chi_l}{dt^s}(0) = \left[\frac{d^s}{dt^s}\left(x_{\mathfrak{in}, N+1} - x^{\delta}_{N+1}\right)\right](0), \quad \frac{d^k \chi_l}{dt^k}(0) = 0,\\
						\frac{d^s \chi_l}{dt^s}(\beta\delta) = \left[\frac{d^s}{dt^s}\left(x_{\mathfrak{f}, N+1} - x^{\delta}_{N+1}\right)\right](\beta\delta), \quad \frac{d^k \chi_l}{dt^k}(\beta\delta) = 0
					\end{gathered}
			\end{equation}
			for all $k \geq 2$, $s \in \{0,1\}$, and large $l \in \mathbb{N}$. The goal is then to replace $x^{\delta}_{N+1}$ by \smash{$x^{\delta}_{N+1} + \chi_l$} for large $l \in \mathbb{N}$ such that the particle positions \smash{$x^{\delta,l}_1, \dots, x^{\delta,l}_N$} associated with the new control $x^{\delta}_{N+1} + \chi_l$ behave like $x^{\delta}_1, \dots, x^{\delta}_N$ for large $l$. To justify this, we will ensure that $\chi_l(t) = 0$ for all $t \in [t_0, \beta \delta -t_0]$, where $t_0 \in (0, 2^{-1}\beta \delta)$ is fixed below, and provide a uniform Lipschitz constant $L > 0$ (independent of $l$ and $t$) with
			\begin{equation}\label{equation:tflkrtK}
				|K(z - x^{\delta}_{N+1}(t)) - K(z - x^{\delta}_{N+1}(t) - \chi_l(t))| \leq L |\chi_l(t)|
			\end{equation}
			for all pairs $(t, z)$ satisfying $t \in [0, t_0] \cup [\beta \delta-t_0, \beta \delta]$ and
			\begin{equation}\label{equation:GaGb}
				z \in \begin{cases}
					G_a = G^{\delta}_a \coloneq \bigcup_{i=1}^N \bigcup_{r \in [0, t_0]}  \overline{B}(x_i^{\delta}(r), \delta^{\alpha}/2) & \mbox{ if } t \in [0, t_0],\\
					G_b = G^{\delta}_b \coloneq \bigcup_{i=1}^N \bigcup_{r \in [\beta \delta-t_0, \beta \delta]}  \overline{B}(x_i^{\delta}(r), \delta^{\alpha}/2) & \mbox{ if } t \in [\beta \delta-t_0, \beta \delta].
				\end{cases}
			\end{equation}
			Then, assuming \eqref{equation:chillimit} and \eqref{equation:tflkrtK}, one has
			\begin{equation}\label{equation:plimitas}
				\begin{gathered}
					\lim\limits_{l\to \infty} \sup_{\substack{s,t \in [0,t_0],\\ z \in G_a}} \left| \int_s^t \left[K(z - x^{\delta}_{N+1}(r)) - K(z - x^{\delta}_{N+1}(r) - \chi_l(r))\right] \, dr \right| = 0,\\
					\lim\limits_{l\to \infty} \sup_{\substack{s,t \in [\beta\delta - t_0,\beta\delta],\\ z \in G_b}} \left| \int_s^t \left[K(z - x^{\delta}_{N+1}(r)) - K(z - x^{\delta}_{N+1}(r) - \chi_l(r))\right] \, dr \right| = 0.
				\end{gathered}
			\end{equation}
			Moreover, for the interactions described by $F_1, \dots, F_N$ one immediately obtains similar relations, noting that $\widehat{F}_1, \dots, \widehat{F}_N$ in \eqref{equation:F_i} are non-singular and in particular locally Lipschitz on $\mathcal{M}^{N+1}$.
			This will then allow (in Step 3) to compare \smash{$(x^{\delta,l}_1, \dots, x^{\delta,l}_N)$} with $(x^{\delta}_1, \dots, x^{\delta}_N)$ using the stability result \Cref{theorem:pert}.
			
			{\it Step 2. Construction of corrections.}
			By \eqref{equations:approx_corred} and \eqref{equation:GaGb}, one can reduce $\delta_0$ (if necessary) and then for any $\delta \in (0,\delta_0)$ fix $t_0 \in (0, 2^{-1}\beta \delta)$ sufficiently small such that $\mathcal{M} \setminus G_a$ and $\mathcal{M} \setminus G_b$ are connected and
			\begin{equation}\label{equation:da3}
				\begin{gathered}
					\operatorname{dist}(x_{N+1}^{\delta}([0,t_0]), G_a) \geq \delta^{\alpha}/3, \quad \operatorname{dist}(x_{N+1}^{\delta}([\beta \delta - t_0, \beta \delta]), G_b) \geq \delta^{\alpha}/3,\\
					\operatorname{dist}(x_{\mathfrak{in},N+1}, G_a) \geq \delta^{\alpha}, \quad \operatorname{dist}(x_{\mathfrak{f},N+1}, G_b) \geq \delta^{\alpha}.
				\end{gathered}
			\end{equation}
			Indeed, such a choice of~$t_0$ with~\eqref{equation:da3} is possible by combining the continuity of~$x^{\delta}$ with the following points:
			\begin{itemize}
				\item $(x_{\mathfrak{in},1}, \dots, x_{\mathfrak{in},N+1})\in \Delta_{N+1}^{\complement}$ and \smash{$(x_{\mathfrak{f},1}, x_{{\mathfrak{in}},2}, \dots, x_{{\mathfrak{in}},N}, x_{\mathfrak{f},N+1})\in \Delta_{N+1}^{\complement}$}, allow to separate $x_{\mathfrak{in},N+1}$ from $G_a$ and $x_{\mathfrak{f},N+1}$ from $G_b$;
				\item \eqref{equation:fedbackt} implies that $|x_1^{\delta}(t) - x_{N+1}^{\delta}(t)| = \delta^{\alpha}$ for all $t \in [0, \beta \delta]$;
				\item The previous points together with \eqref{equations:approx_corred} yield (reducing $\delta_0 > 0$ if necessary) that $|x_{\mathfrak{in},i}-x_{N+1}^{\delta}(0)| \geq \delta^{\alpha}$ and  $ |x_i^{\delta}(\beta \delta) - x_{N+1}^{\delta}(\beta \delta)| \geq \delta^{\alpha}/2$
				for all $\delta \in (0,\delta_0)$ and $2 \leq i \leq N$. 
			\end{itemize}
			Now, utilizing \Cref{lemma:localcurve} and the continuity of $x^{\delta}_{N+1}$, one can draw curves $\varphi^a_l, \varphi^b_l\colon [0,1]\longrightarrow \mathcal{M}$ such that
			\begin{equation*}
				\begin{gathered}
					\frac{d^s \varphi^a_l}{dt^s}(0) = l^{-s}\left[\frac{d^s}{dt^s}\left(x_{\mathfrak{in}, N+1} - x^{\delta}_{N+1}\right)\right](0), \quad \frac{d^s \varphi^a_l}{dt^s}(1) = 0, \\
					\frac{d^s \varphi^b_l}{dt^s}(0) = 0, \quad \frac{d^s \varphi^b_l}{dt^s}(1) = l^{-s}\left[\frac{d^s}{dt^s}\left(x_{\mathfrak{f}, N+1} - x^{\delta}_{N+1}\right)\right](\beta\delta),\\
					\frac{d^{k} \varphi^a_l}{dt^{k}}(0) = 0, \quad \frac{d^{k} \varphi^a_l}{dt^{k}}(1) = 0, \quad
					\frac{d^{k} \varphi^b_l}{dt^{k}}(0) = 0, \quad \frac{d^{k} \varphi^b_l}{dt^{k}}(1) = 0
				\end{gathered}
			\end{equation*}
			for $s \in \{0,1\}$, $k \geq 2$, $l \in \mathbb{N}$, and (further reducing $t_0$ if necessary)
			\begin{equation}\label{equation:beingr}
				(x^{\delta}_{N+1}(t)+\varphi^a_l(r)) \in  G_a^{\complement}, \quad (x^{\delta}_{N+1}(\beta\delta -t)+\varphi^b_l(r)) \in  G_b^{\complement}
			\end{equation}
			for all $t \in [0,t_0]$ and $r \in [0,1]$. The property \eqref{equation:beingr} ensures together with the choices of $\delta_0$ and $t_0$ the Lipschitz condition \eqref{equation:tflkrtK}, and thus yields \eqref{equation:plimitas}. 
			Finally, a family with \cref{equation:chillimit,equation:reqchi1,equation:plimitas} is defined by setting
			\[
			\chi_l(t) \coloneq \begin{cases}
				\varphi_l^a(lt) & \mbox{ if } t \in [0,1/l],\\
				\varphi_l^b(l(t - \beta \delta +1/l)) & \mbox{ if } t \in [\beta\delta - 1/l, \beta \delta],\\
				0 & \mbox{ otherwise} 
			\end{cases}
			\]
			for all integers $l >  t_0^{-1}$. 
			
			{\it Step 3. Perturbed system.} Consider the perturbed problem arising from the first~$N$ equations in \eqref{equation:dsnp1} after replacing $x^{\delta}_{N+1}$ by  $x^{\delta,l}_{N+1} \coloneq x^{\delta}_{N+1} + \chi_l$. That is,
			\begin{equation}\label{equation:dsnp1pert}
				\begin{cases}
					\dot{x}^{\delta, l}_i = K(x^{\delta, l}_i-x^{\delta,l}_{N+1}) + F_i(x^{\delta,l}_1, \dots, x^{\delta,l}_N, x^{\delta,l}_{N+1}), \\
					x^{\delta,l}_i(0) = x_{\mathfrak{in}, i},
				\end{cases}
			\end{equation}
			where $1 \leq i \leq N$. As the constructions in Step 2 ensure \eqref{equation:plimitas} and $\widehat{F}_1, \dots, \widehat{F}_N$ in~\eqref{equation:F_i} are locally Lipschitz on $\mathcal{M}^{N+1}$, the perturbative result stated in \Cref{theorem:pert} can be used to compare $(x^{\delta,l}_1, \dots, x^{\delta,l}_N)$ with the unperturbed configuration $(x^{\delta}_1, \dots, x^{\delta}_N)$.
			More precisely, fixing sufficiently large~$l=l(\delta) \in \mathbb{N}$, applying \Cref{theorem:pert} three times, and using $\chi_l = 0$ on $[t_0,\beta \delta-t_0]$, shows via \eqref{equations:approx_corred} and \eqref{equation:reqchi1} that the associated solution $(x^{\delta,l}_1, \dots, x^{\delta,l}_N)$ to~\eqref{equation:dsnp1pert} exists until $t = \beta\delta$ and satisfies	
			\begin{gather*}\label{equations:approx_corred2}
				\sup_{t \in [0, \beta\delta]} \left(|x_1^{\delta,l}(t) - x_{\mathfrak{in},1} - t (\beta\delta)^{-1} (x_{\mathfrak{f},1}-x_{\mathfrak{in},1})| +  \sum_{i = 2}^N |x_i^{\delta,l}(t) - x_{\mathfrak{in},i}|\right) < \varepsilon,\\
				x_{N+1}^{\delta,l}(\beta\delta) = x_{\mathfrak{f}, N+1}, \quad \dot{x}_{N+1}^{\delta,l}(0) = \dot{x}_{N+1}^{\delta,l}(\beta \delta) = 0.
			\end{gather*}
			Indeed, the first application of \Cref{theorem:pert} implies that $(x^{\delta,l}_1, \dots, x^{\delta,l}_N)$ converges to $(x^{\delta}_1, \dots, x^{\delta}_N)$ uniformly on $[0, t_0]$ as $l \longrightarrow \infty$. This property persists on $[t_0, \beta\delta - t_0]$ by a second application of \Cref{theorem:pert} on $[t_0, \beta\delta - t_0]$, where $\chi_l = 0$, using that \smash{$|x^{\delta,l}_i(t_0)-x^{\delta}_i(t_0)|\longrightarrow 0$} as $l \longrightarrow \infty$. The third application of \Cref{theorem:pert} accounts for the small perturbation due to $\chi_l$ on $[\beta\delta-t_0, \beta\delta]$ and the small initial perturbation at $t = \beta\delta - t_0$, using the already established closeness of \smash{$x^{\delta,l}_i(\beta\delta-t_0)$} and \smash{$x^{\delta}_i(\beta\delta-t_0)$} for large $l$. 
			
			Finally, an additive control is defined by
			\begin{equation}\label{equation:zetaformula_}
				\zeta \coloneq \dot{x}^{\delta,l}_{N+1} - F_{N+1}(x^{\delta,l}_1, \dots, x^{\delta,l}_N, x^{\delta,l}_{N+1})
			\end{equation}
			on $t \in [0,\beta\delta]$.
		\end{proof}
		
		\begin{remark}
			Using a mollification argument, one could replace $\zeta$ in the proof of \Cref{corollary:approx1} by a smooth function while maintaining the first line in \eqref{equations:approx_cor} together with $|x_{N+1}(\beta\delta) - x_{\mathfrak{f}, N+1}| < \varepsilon$.
		\end{remark}
		
		The next corollary shows that the controls constructed in Theorems~\ref{theorem:auxiliar_approx_particle_1} and~\ref{corollary:approx1} can be chosen continuous with respect to targets from certain balls. To underline that $\beta$ in this context can be bounded independently of the small parameters $\delta$ and $\varepsilon$, we denote
		\[
			\mathcal{J}_{\ell} \coloneq [\ell |f(0)|^{-1}\|A\|^{-1}, 3\ell |f(0)|^{-1}\|A^{-1}\|]
		\]
		where $\ell > 0$ and $\|A\|$ is the usual operator norm. The set $\mathcal{J}_{\ell}$ will specify the range of $\beta$ and $\ell$ will be used to associate with each particle a ball in which any target can be reached along a straight line without intersecting initial positions of other particles.
	
		\begin{corollary}\label{corollary:approx1cont}
			Given $x_{\mathfrak{in}} = (x_{\mathfrak{in},1}, \dots, x_{\mathfrak{in},N+1})\in \Delta_{N+1}^{\complement}$, let $\ell > 0$ and $\mathcal{B}_1, \dots, \mathcal{B}_{N+1}\subset \mathcal{M}$ be closed balls of diameter~$\ell$ and mutual distance at least~$4\ell$ such that
			\begin{equation}\label{equation:ell}
					\operatorname{dist}(x_{\mathfrak{in},i}, \mathcal{B}_i) \in (\ell, 2\ell) \boxnote{\qquad \qquad \quad \, (1\leq i \leq N+1).}
			\end{equation}
			For each $\varepsilon > 0$, there exists $\delta_0 > 0$ such that for all $\delta \in (0, \delta_0)$ there is a continuous map
			\[
				{\mathcal{B}}_1\times \dots \times {\mathcal{B}}_{N+1} \longrightarrow C^0([0,1];\mathbb{R}^d)\times\mathcal{J}_{\ell}, \quad x_{\mathfrak{f}} \mapsto (\zeta, \beta),
			\] 
			associating with each $x_{\mathfrak{f}} \in {\mathcal{B}}_1\times \dots \times {\mathcal{B}}_{N+1}$ a control $\zeta$ for which the solution $x$ to~\eqref{equation:particlesystem_deterministic} satisfies \eqref{equations:approx_cor}.
			
			\end{corollary}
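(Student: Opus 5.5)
The plan is to run the constructions in the proofs of \Cref{theorem:auxiliar_approx_particle_1} and \Cref{corollary:approx1} with the target $x_{\mathfrak{f}}$ as a parameter. We check that every choice made there can be taken uniformly over $\mathcal{B}_1\times\dots\times\mathcal{B}_{N+1}$ and to depend continuously on $x_{\mathfrak{f}}$. The ball geometry removes the only genuinely discontinuous step, the perturbation of the target in Step~1 of \Cref{theorem:auxiliar_approx_particle_1}. Here only $x_{\mathfrak{f},1}$ and $x_{\mathfrak{f},N+1}$ enter \eqref{equations:approx_cor}, but carrying the full product does no harm.

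First, we define $\beta$ and $u$ explicitly. For $x_{\mathfrak{f},1}\in\mathcal{B}_1$ we set $\beta f(0)Au = x_{\mathfrak{f},1}-x_{\mathfrak{in},1}$, that is, $\beta = |(f(0)A)^{-1}(x_{\mathfrak{f},1}-x_{\mathfrak{in},1})|$ and $u = (f(0)A)^{-1}(x_{\mathfrak{f},1}-x_{\mathfrak{in},1})/\beta$. By \eqref{equation:ell}, $|x_{\mathfrak{f},1}-x_{\mathfrak{in},1}|\in(\ell,3\ell)$, so $\beta\in\mathcal{J}_\ell$ and both $\beta$ and $u$ are continuous in $x_{\mathfrak{f},1}$. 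No perturbation $\widetilde{x}_{\mathfrak{f},1}$ is needed.

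The segment condition in \eqref{equation:assumption} is where the mutual distance $4\ell$ enters. The segment from $x_{\mathfrak{in},1}$ to a point of $\mathcal{B}_1$ stays within distance $3\ell$ of $x_{\mathfrak{in},1}$. Meanwhile, $x_{\mathfrak{in},i}$ lies within $2\ell$ of $\mathcal{B}_i$, and $\mathcal{B}_i$ is at least $4\ell$ away from $\mathcal{B}_1$. I expect some care to be needed here: I would try to show that the segment stays at distance at least some $2b>0$ from every $x_{\mathfrak{in},i}$, $i\ge2$, uniformly over $\mathcal{B}_1$. If the crude triangle inequality fails, I would use compactness of $\mathcal{B}_1$ together with the pointwise avoidance obtained from the ball configuration.

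Next, we obtain uniform constants. With $b,c$ chosen uniformly via compactness, the constant $\widetilde{C}$ in \eqref{KFbounds} and the Gr\"onwall bound \eqref{equation:gwe} are uniform in $x_{\mathfrak{f}}$. Hence $\delta_0$ can be taken independent of the target. For fixed $\delta$, the feedback solution of \eqref{equation:auxiliarysystem_additive}--\eqref{equation:auxiliarysystem_additive2} is a solution of a Lipschitz ODE on a uniformly separated region. Its parameters $(u,\beta)$ depend continuously on $x_{\mathfrak{f}}$, so $x^\delta$ and $x^\delta_{N+1}$ depend continuously on $x_{\mathfrak{f}}$ in $C^1$. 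To obtain controls on $[0,1]$, we rescale time, or extend by the zero-velocity endpoints; the normalization in the statement is just a reparametrization of $[0,\beta\delta]$.

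The main obstacle is the endpoint correction of \Cref{corollary:approx1}, which must also be made continuous. The quantity $t_0$ and the separation \eqref{equation:da3} can be chosen uniformly by compactness. The curves $\varphi^a_l,\varphi^b_l$ from \Cref{lemma:localcurve} need to depend continuously on their endpoint data. My first attempt is to build them explicitly: take a smooth cutoff interpolation of the endpoint value plus the first-order term, for instance $\varphi^b_l(r)=\psi(r)\bigl(v+l^{-1}w(r-1)\bigr)$ with $\psi$ flat at the endpoints. This is linear in the data $v=x_{\mathfrak{f},N+1}-x^\delta_{N+1}(\beta\delta)$ and the corresponding velocity term $w$. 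It stays in $G_b^{\complement}$ because $x_{\mathfrak{f},N+1}\in\mathcal{B}_{N+1}$ lies far from the other balls and from the paths. If the straight interpolation hits $G_b$, I would instead route through a fixed detour chosen uniformly over the compact family.

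The integer $l$ can be fixed uniformly, since the convergence in \Cref{theorem:pert} is uniform over the compact parameter set. Then $\zeta$, given by \eqref{equation:zetaformula_}, is a composition of continuous maps of $x_{\mathfrak{f}}$. Combined with the continuous map $x_{\mathfrak{f}}\mapsto\beta$, this yields the desired continuous map $x_{\mathfrak{f}}\mapsto(\zeta,\beta)$.
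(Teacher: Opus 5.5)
Your overall route is the paper's. You rerun the constructions of \Cref{theorem:auxiliar_approx_particle_1} and \Cref{corollary:approx1} with $x_{\mathfrak{f}}$ as a parameter, take $\widetilde{x}_{\mathfrak{f},1}=x_{\mathfrak{f},1}$, fix $b,c,t_0,l,\delta_0$ uniformly, and compose continuous maps. Your explicit $(u,\beta)$ and the check $\beta\in\mathcal{J}_\ell$ are correct.

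\textbf{The endpoint correction fails as written.} Your $\varphi^b_l(r)=\psi(r)\bigl(v+l^{-1}w(r-1)\bigr)$ does satisfy the derivative constraints. But it moves the control particle essentially along the straight segment from $x^{\delta}_{N+1}(\beta\delta)=x^{\delta}_1(\beta\delta)-\delta^{\alpha}u$ to $x_{\mathfrak{f},N+1}$, and your justification only concerns the endpoint $x_{\mathfrak{f},N+1}$. The starting point lies at distance $\delta^{\alpha}$ from $x_1$. If $x_{\mathfrak{f},N+1}-x_{\mathfrak{f},1}$ makes an angle below $\pi/6$ with $u$, the segment enters $\overline{B}(x^{\delta}_1(\beta\delta),\delta^{\alpha}/2)\subset G_b$. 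The hypotheses allow this. In the Riesz case $f(0)A$ is a positive multiple of the identity, so $u$ points from $x_{\mathfrak{in},1}$ to $x_{\mathfrak{f},1}$. If $\mathcal{B}_{N+1}$ lies further along that ray, the straight path runs through $x_1$. Likewise, some $x_{\mathfrak{in},i}$ with $i\geq 2$ may lie on the segment between $\mathcal{B}_1$ and $\mathcal{B}_{N+1}$ (place $\mathcal{B}_i$ off to the side). The same problems affect $\varphi^a_l$.

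So your fallback has to be the actual construction, and it is what the paper does. It uses fixed connecting curves, independent of $\delta,t_0,l$ and of the target, staying uniformly away from the other particles. Parameter-dependent endings are attached to them locally through \Cref{lemma:localcurve}. Near $x_1$, the ending must leave the $\delta^{\alpha}/2$-ball continuously in $u$. One way: move radially along $-u$, then rotate on a sphere around $x_1$ to a fixed direction. This is possible because the admissible $u$ lie in a cap inside an open hemisphere: the image of $\mathcal{B}_1-x_{\mathfrak{in},1}$ under $(f(0)A)^{-1}$ is convex and avoids $0$.

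\textbf{Two smaller points.} First, the segment condition in \eqref{equation:assumption}. Let $d_j$ denote $\operatorname{dist}(x_{\mathfrak{in},j},\mathcal{B}_j)\in(\ell,2\ell)$. Your crude bound indeed fails, since the hypotheses only give $|x_{\mathfrak{in},1}-x_{\mathfrak{in},i}|\geq 4\ell-d_1-d_i$, a positive number that can be arbitrarily small. The pointwise avoidance you want to feed into compactness is exactly what needs proof, and it follows from convexity. For $z=x_{\mathfrak{in},1}+s(y-x_{\mathfrak{in},1})$ with $y\in\mathcal{B}_1$ and $s\in[0,1]$, one has $\operatorname{dist}(z,\mathcal{B}_1)\leq(1-s)d_1<2\ell$. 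On the other hand, $\operatorname{dist}(x_{\mathfrak{in},i},\mathcal{B}_1)\geq 4\ell-d_i>2\ell$. Since $\operatorname{dist}(\cdot,\mathcal{B}_1)$ is $1$-Lipschitz, this gives $|z-x_{\mathfrak{in},i}|\geq 4\ell-d_1-d_i>0$ uniformly in $y$. That yields $b$, and then $c$, uniformly.

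Second, rescaling $[0,\beta\delta]$ to $[0,1]$ is not admissible. The statement requires $\zeta$ itself to drive \eqref{equation:particlesystem_deterministic} on $[0,\beta\delta]$. So you must extend $\zeta$ past $\beta\delta$ (the paper extends it constantly), after shrinking $\delta_0$ so that $\beta\delta\leq 1$ for all $\beta\in\mathcal{J}_\ell$.
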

			\begin{proof} The desired map will be defined by following the constructions described in the proofs of Theorems~\ref{theorem:auxiliar_approx_particle_1} and~\ref{corollary:approx1}. In this process, continuity with respect to $x_{\mathfrak{f},2}, \dots, x_{\mathfrak{f},N}$ is immediate, as these targets do not enter the controllability condition \eqref{equations:approx_cor}.
				
			{\it Step 1. \Cref{theorem:auxiliar_approx_particle_1}'s constructions.} By the proof of \Cref{theorem:auxiliar_approx_particle_1}, there exists $\delta_0 > 0$ such that for all $\delta \in (0,\delta_0)$ one can assign $x_{\mathfrak{f},1} \mapsto (u, \beta)$ with \eqref{equations:approx} through a continuous map ${\mathcal{B}_1}\longrightarrow \mathbb{S}^{d-1}\times\mathcal{J}_{\ell}$. Indeed, \eqref{equation:assumption} holds due to \eqref{equation:ell} with $\widetilde{x}_{\mathfrak{f},1} = x_{\mathfrak{f},1}$ so that~$u$ and~$\beta$ are determined there as continuous functions of $x_{\mathfrak{f},1}\in\mathcal{B}_1$. Moreover, the constants $b$ and $c$ in \eqref{equation:definition_b} can be taken uniformly with respect to $x_{\mathfrak{f},1}$ from bounded sets such that the estimates developed there allow choosing~$\delta_0$ independently of $x_{\mathfrak{f},1}\in\mathcal{B}_1$. The range of $\beta$ follows from \eqref{equation:assumption}. 
				
			{\it Step 2. \Cref{corollary:approx1}'s constructions.} If the value of $\delta_0$ obtained in the previous step is further reduced during the proof of \Cref{corollary:approx1}, this can be done uniformly with respect to $x_{\mathfrak{f},1}$ and $x_{\mathfrak{f},N+1}$ (from $\mathcal{B}_1\times \mathcal{B}_{N+1}$), noting that the initial positions of all particles are separated according to \eqref{equation:ell} and the minimal mutual distance of $4\ell$ between the balls $\mathcal{B}_1, \dots, \mathcal{B}_{N+1}$. Hence, suitable choices of~$t_0$ and~$l$ can be made there likewise uniformly with respect to $(x_{\mathfrak{f},1}, x_{\mathfrak{f},N+1})$. Consequently, \Cref{corollary:approx1}'s constructions provide~$\chi_l$ and~\smash{$\dot{\chi}_l$} as continuous functions of $(x_{\mathfrak{f},1}, x_{\mathfrak{f},N+1})$ with respect to the supremum norm. Indeed, given any $\delta \in (0, \delta_0)$, the endpoints in \eqref{equation:reqchi1} depend continuously on $(x_{\mathfrak{f},1}, x_{\mathfrak{f},N+1})$, and \Cref{lemma:localcurve} allows to construct $\chi_l$ and $\dot{\chi}_l$ continuously with respect to these endpoints. Notably, in the proof of  \Cref{corollary:approx1}, we apply \Cref{lemma:localcurve} only locally to attach suitable curve endings to fixed curves that connect $\mathcal{B}_{N+1}$ with each $\mathcal{B}_i$, $1 \leq i \leq N$, and these fixed connection curves are independent of $\delta, t_0$ and $l$.
			Finally, the Lipschitz constant in \eqref{equation:tflkrtK} can be taken uniformly with respect to $(x_{\mathfrak{f},1}, x_{\mathfrak{f},N+1})$ due to $\beta \geq \ell |f(0)|^{-1}\|A\|^{-1}$ and the uniform separation of the initial states in terms of $\ell$. Hence, the functions $x^{\delta,l}$ and $\dot{x}^{\delta,l}$ determined via \eqref{equation:dsnp1pert}, and also the composition $F_{N+1}(x^{\delta,l})$, are continuous in the supremum norm with respect to $(x_{\mathfrak{f},1}, x_{\mathfrak{f},N+1})$. The formula \eqref{equation:zetaformula_}, extended constantly to $[0,1]$, provides now the desired map.
			\end{proof}

		The statements of Theorems~\ref{theorem:auxiliar_approx_particle_1},~\ref{corollary:approx1}, and~\Cref{corollary:approx1cont} remain true when interchanging the roles of $x_1$ and $x_j$ for any $2 \leq j \leq N$. Thus, by iterating applications of these results at most $N$ times, each particle can be steered approximately to any prescribed target.

		\begin{corollary}\label{corollary:approx2}
				Given $x_{\mathfrak{in}}, x_{\mathfrak{f}} \in \Delta_{N+1}^{\complement}$ and $\varepsilon > 0$, there are $\beta_1, \dots, \beta_N > 0$ and $\delta_0 \in (0,1)$ such that for all $\delta \in (0, \delta_0)$ there is a control $\zeta \in C^{\infty}([0,\beta\delta]; \mathbb{R}^d)$ with $\beta\coloneq (\beta_1+\dots+\beta_N)$ such that the solution $x$ to \eqref{equation:particlesystem_deterministic} exists on $[0, \beta\delta]$ and satisfies
				\begin{equation}\label{equation:adi}
					\begin{aligned}					
						& \sum_{i =1}^N\sup_{t \in [0, t_{i-1}]} \left|x_i(t) - x_{\mathfrak{in}, i}\right| \\ 
						& +  \sum_{i=1}^N\sup_{t \in [t_{i-1}, t_i]} \left|x_i(t) - x_{i}\left(t_{i-1}\right) - \left(t-t_{i-1}\right) (\beta_i\delta)^{-1} (x_{\mathfrak{f},i}-x_{\mathfrak{in},i})\right|  \\ 
						& +  \sum_{i =1}^N\sup_{t \in [t_i, \beta\delta]} \left|x_i(t) - x_{i}\left(t_{i}\right)\right| + |x_{N+1}(\beta\delta) - x_{\mathfrak{f}, N+1}| \\ 
						& < \varepsilon,
					\end{aligned}
				\end{equation}
				where
				\[
					t_0 \coloneq  0, \quad t_1 \coloneq \beta_1\delta, \quad t_2 \coloneq \sum_{i=1}^{2} \beta_i \delta, \quad \dots, \quad t_N \coloneq \sum_{i=1}^{N} \beta_i \delta = \beta \delta.
				\]
				Under the assumptions on $x_{\mathfrak{in}}$, $\ell$, and $\mathcal{B}_1, \dots, \mathcal{B}_{N+1}$ in \Cref{corollary:approx1cont}, for each $\varepsilon > 0$ there exists $\delta_0 > 0$ such that for all $\delta \in (0, \delta_0)$ there is a continuous map $\prod_{i=1}^{N+1}{\mathcal{B}}_i\longrightarrow  C^0([0,1];\mathbb{R}^d)\times\mathcal{J}_{\ell}^N$ assigning $x_{\mathfrak{f}} \mapsto (\zeta,\beta_1,\dots,\beta_N)$ with \eqref{equation:adi}.
		\end{corollary}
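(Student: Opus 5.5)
We plan to obtain \Cref{corollary:approx2} by concatenating $N$ applications of \Cref{corollary:approx1} (with the roles of $x_1$ and $x_i$ interchanged at the $i$-th stage) on the consecutive intervals $[t_{i-1},t_i]$. In the $i$-th stage only $x_i$ is moved toward $x_{\mathfrak{f},i}$, and the others are kept nearly at rest.

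First we fix the intermediate positions of the control particle. We choose points $y_1,\dots,y_{N-1}\in\mathcal{M}$ and set $y_0\coloneq x_{\mathfrak{in},N+1}$ and $y_N\coloneq x_{\mathfrak{f},N+1}$. The admissibility condition of \Cref{corollary:approx1} requires that at the end of stage $i$ the configuration
\[
(x_{\mathfrak{f},1},\dots,x_{\mathfrak{f},i},x_{\mathfrak{in},i+1},\dots,x_{\mathfrak{in},N},y_i)
\]
lies in $\Delta_{N+1}^{\complement}$. We also need the intermediate configurations
\[
(x_{\mathfrak{f},1},\dots,x_{\mathfrak{f},i},x_{\mathfrak{in},i+1},\dots,x_{\mathfrak{in},N})
\]
to lie in $\Delta_N^{\complement}$. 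This may fail, since $x_{\mathfrak{f},i}$ could coincide with some $x_{\mathfrak{in},j}$ with $j>i$. We therefore replace each $x_{\mathfrak{f},i}$ by a perturbed target $\widetilde{x}_{\mathfrak{f},i}$ within $\varepsilon/(4N)$, chosen to avoid the finitely many points $x_{\mathfrak{in},j}$ and the chosen $y$'s. Because $d\ge2$, the avoided set is finite, so such a choice is always possible.

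Next we run the induction. Stage $1$ applies \Cref{corollary:approx1} from $x_{\mathfrak{in}}$ with targets $\widetilde{x}_{\mathfrak{f},1}$ and $y_1$, and tolerance $\varepsilon_1$. This yields $\beta_1$, a threshold $\delta_0^{(1)}$, and the endpoint $x(t_1)$, which satisfies $x_{N+1}(t_1)=y_1$ exactly and whose other components lie within $\varepsilon_1$ of the intended configuration. Stage $i$ then starts from the actual state $x(t_{i-1})$. The issue is that \Cref{corollary:approx1} produces $\beta_i$ and $\delta_0^{(i)}$ that depend on this starting state, and that state itself depends on $\delta$.

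To handle this, we use the uniform version, \Cref{corollary:approx1cont}. Near a fixed reference configuration the parameters $\delta_0$ and $\beta$ can be chosen uniformly: the constants $b,c$ in \eqref{equation:definition_b} are stable under small perturbations of the initial data. Alternatively, one can invoke \Cref{theorem:pert} to compare with the reference start. We choose the tolerances backwards, $\varepsilon_N\ll\varepsilon_{N-1}\ll\dots\ll\varepsilon_1\ll\varepsilon/N$, so that all starting states stay in the region where the estimates are uniform. The first terms of \eqref{equation:adi}, which require the trajectory to follow straight lines, come directly from \eqref{equations:approx_cor} on each subinterval. The terms requiring that nothing moves outside $[t_{i-1},t_i]$ follow from summing the $\varepsilon_j$ contributions. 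The velocity conditions $\dot x_{N+1}=0$ at the junctions make the concatenated $x_{N+1}$ a $C^1$ curve, so the glued control $\zeta$ is continuous. A final mollification, as in the remark after \Cref{corollary:approx1}, produces a smooth $\zeta$; since \eqref{equation:adi} only asks for $|x_{N+1}(\beta\delta)-x_{\mathfrak{f},N+1}|$ small, this is harmless.

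For the continuous-map statement, the balls $\mathcal{B}_i$ keep every intermediate configuration uniformly separated, by \eqref{equation:ell} and the mutual distance $4\ell$. Hence no perturbation of the targets is needed, and the $y_i$ can be fixed points in $\mathcal{B}_{N+1}$-adjacent regions that are independent of $x_{\mathfrak{f}}$. Composing the continuous maps of \Cref{corollary:approx1cont} stage by stage, with each stage's start depending continuously on the previous data via continuous dependence of ODE solutions, gives the continuous assignment $x_{\mathfrak{f}}\mapsto(\zeta,\beta_1,\dots,\beta_N)$ with values in $C^0([0,1];\mathbb{R}^d)\times\mathcal{J}_\ell^N$.

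The main obstacle is the uniformity of $\delta_0$ with respect to the perturbed, $\delta$-dependent initial states at each stage. We would first try to verify it by rerunning the bootstrap of \Cref{theorem:auxiliar_approx_particle_1} with $b,c$ chosen for a small neighborhood of the reference configuration.
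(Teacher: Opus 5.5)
Your proposal is correct and follows the paper's own proof. You perturb the targets so that the intermediate configurations avoid the diagonal, apply \Cref{corollary:approx1} (resp.\ \Cref{corollary:approx1cont}) successively to each particle up to relabeling, glue the control-particle paths in a $C^1$ way using $\dot x_{N+1}=0$ at the junctions to get a continuous $\zeta$ via \eqref{equation:zetaformula_}, and then smooth $\zeta$ by density and \Cref{theorem:pert}. You also treat explicitly the uniformity of $\delta_0$ and $\beta_i$ with respect to the $\delta$-dependent intermediate states, which the paper handles only by taking $\delta_0<\min_i\delta_{0,i}$; to make $\beta_i$ genuinely $\delta$-independent, keep the reference pair $(u_i,\beta_i)$, which is why \eqref{equation:adi} measures the displacement from $x_i(t_{i-1})$.
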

		\begin{proof}
			Without loss of generality, it holds $x_{\mathfrak{f},i} \neq x_{{\mathfrak{in}},j}$ for all $1\leq i\neq j \leq N$, as otherwise one could suitably perturb some targets.
			The assertions thus follow from \Cref{corollary:approx1} and \Cref{corollary:approx1cont} (up to relabeling of particles) by successively driving for each $1\leq i \leq N$ the $i$-th particle during $[t_{i-1}, t_i]$ to its target, while keeping the others approximately at their current location during that interval. In this process, \Cref{corollary:approx1} (or \Cref{corollary:approx1cont}) provide $N$ versions of $\beta$ and $\delta_0$, say $\beta_1, \dots, \beta_N > 0$ and $\delta_{0, 1},\dots,\delta_{0, N}$, and we fix $\delta_0 \in (0, \min\{\delta_{0,1}, \dots, \delta_{0,N}\})$. The additional continuity statement follows from \Cref{corollary:approx1cont} by gluing the path of the control particle via \eqref{equations:approx_cor}  in a differentiable way at the points $t_1, \dots, t_{N-1}$.
			Through \eqref{equation:zetaformula_}, this first provides a control $\zeta \in C^0([0,\beta\delta]; \mathbb{R}^d)$ which can be extended (e.g., constantly) to $[\beta \delta,1]$. By~\Cref{theorem:pert} and a density argument,~$\zeta$ can be approximated by $C^{\infty}([0,\beta\delta]; \mathbb{R}^d)$-functions (or the extension of $\zeta$ by $C^{\infty}([0,1]; \mathbb{R}^d)$-functions) that maintain \eqref{equation:adi}.
		\end{proof}

		We can now conclude \Cref{theorem:maintrajectorial}, which provides global approximate controllability of all particles in any time and simultaneously allows to prescribe the path along which the first~$N$ particles are steered to the target. 
		
		\begin{proof} [Proof of \Cref{theorem:maintrajectorial}]
		Let $M \in \mathbb{N}$ and $U$ be a neighborhood of $\Delta_{N}$ satisfying $\mathfrak{c}([0,T_{\mathfrak{f}}]) \cap \overline{U} = \emptyset$. Moreover, fix numbers
		\[
			0 = a_1 < a_2 < b_1 < a_3 < b_2 < \dots < a_M < b_{M-1} < b_M = T_{\mathfrak{f}}
		\]
		such that there exists an open covering of~$\mathfrak{c}([0, T_{\mathfrak{f}}])$ by balls $(\mathcal{O}_m)_{m\in\{1,\dots,M\}} \subset \mathcal{M}^{N}\setminus \overline{U}$ with $\mathfrak{c}([a_m, b_m]) \subset \mathcal{O}_m$ and $\mathcal{O}_m$ having diameter less than $\varepsilon/(2\sqrt{N})$ for all $1 \leq m \leq M$. 
		
		The proof will be concluded by choosing $\zeta$ through the following steps in a way that 
		\[
			(x_1(t), \dots, x_N(t)) \in \mathcal{O}_m, \quad |x_{N+1}(T_{\mathfrak{f}}) - x_{\mathfrak{f},N+1}| < \varepsilon/2
		\]
		for all $t \in [a_m, b_m]$ and $1 \leq m \leq M$.
		
		 {\it Step 1. Stability property.} By local well-posedness and a compactness argument, given any $r_0 > 0$ and $R_0 > \max_{t \in [0,T_{\mathfrak{f}}]}|\mathfrak{c}(t)| +2r_0$, there are neighborhoods $\mathcal{U}_m \subset \mathcal{O}_m$ of $\mathfrak{c}([a_m, b_m])$, $1 \leq m \leq M$ and numbers $s_1,\dots,s_M \in (0,1)$ with the following property. If $(\widetilde{x}_{\mathfrak{in},1}, \dots, \widetilde{x}_{\mathfrak{in},N}) \in \mathcal{U}_m$ for some $1\leq m \leq M$ and $\widetilde{x}_{\mathfrak{in},N+1} \in B_{\mathcal{M}}(0, R_0)$ with $|\widetilde{x}_{\mathfrak{in},N+1}-\widetilde{x}_{\mathfrak{in},i}|  > r_0$ for all $1\leq i \leq N$, then the solution $\widetilde{x}$ to the uncontrolled problem
		\begin{equation}\label{equation:uncont}
			\begin{cases}
				\dot{\widetilde{x}}_i = K(\widetilde{x}_i-\widetilde{x}_{N+1}) + F_i(\widetilde{x}),\\
				\dot{\widetilde{x}}_{N+1} = F_{N+1}(\widetilde{x}),\\
				\widetilde{x}(0) = (\widetilde{x}_{\mathfrak{in},1}, \dots, \widetilde{x}_{\mathfrak{in},N}, \widetilde{x}_{\mathfrak{in},N+1}), 
			\end{cases}\boxnote{\qquad \quad \, (1 \leq i \leq N)}
		\end{equation}
		satisfies
		\begin{gather*}
			\forall t \in [0, s_m]\colon (\widetilde{x}_1(t), \dots, \widetilde{x}_N(t)) \in \mathcal{O}_m.
		\end{gather*}

		{\it Step 2. Construction of the control.} To define $\zeta$ on $[0, b_1]$, \Cref{corollary:approx2} is employed to drive $(x_1, \dots, x_N)$ in a time $t_1 \in (0, b_1)$ into a neighborhood of the target $\mathfrak{c}(b_1)$ that is contained in $\mathcal{U}_1\cap\mathcal{U}_2$, while ensuring that $(x_1(t), \dots, x_N(t)) \in \mathcal{U}_1$ for all $t \in [0,t_1]$. This is possible thanks to \eqref{equation:adi}, which allows to push $(x_1, \dots, x_N)$ approximately along piecewise linear paths contained in $\mathcal{U}_1$. In addition, \Cref{corollary:approx2} allows to ensure $x_{N+1}(t_1) \in B_{\mathcal{M}}(0, R_0)$ and $|x_{N+1}(t_1)- x_i(t_1)| > r_0$ for all $1 \leq i \leq N$. Then, the control is switched off during $[t_1, \min\{b_1, t_1+s_1\}]$. If $t_1+s_1 < b_1$, the control is activated at time $t_1+s_1$ to drive $(x_1, \dots, x_N)$ similarly as before back to a neighborhood of $\mathfrak{c}(b_1)$ contained in $\mathcal{U}_1\cap \mathcal{U}_2$ quicker than a time $t_2 \in (0, b_1-t_1 -s_1)$, and all without leaving $\mathcal{O}_1$. This idea can be repeated a finite number of times until $(x_1(b_1), \dots, x_N(b_1))$ belongs to a neighborhood of $\mathfrak{c}(b_1)$ contained in $\mathcal{U}_1\cap\mathcal{U}_2$, while ensuring that $(x_1(t), \dots, x_N(t)) \in \mathcal{O}_1$ for all $t \in [0,b_1]$. 
		
		Similarly, $\zeta$ is constructed on $(b_1, b_2]$ by driving $(x_1, \dots, x_N)$  in a time $t_2 > 0$ shorter than $b_2-b_1$ into a neighborhood of $\mathfrak{c}(b_2)$ that is contained in $\mathcal{U}_2\cap\mathcal{U}_3$, while preventing $(x_1, \dots, x_N)$ from leaving $\mathcal{U}_2$ during this stage. Simultaneously, one can ensure $x_{N+1}(b_1 + t_2) \in B_{\mathcal{M}}(0, R_0)$ and $|x_{N+1}(b_1+t_2)- x_{i}(b_1+t_2)| > r_0$ for all $1 \leq i \leq N$. Then, the controls are switched off on $[b_1+t_2, \min\{b_2, b_1+t_2 +s_2\}]$, and if $b_1+t_2 +s_2 < b_2$ the state $(x_1, \dots, x_N)$ is driven in a time shorter than $b_2 - b_1 - t_2 - s_2$ back to a neighborhood of $\mathfrak{c}(b_2)$ contained in $\mathcal{U}_2\cap\mathcal{U}_3$ without leaving $\mathcal{O}_2$. 
		This idea can be iterated until $(x_1(b_2), \dots, x_N(b_2))$ belongs to a neighborhood of $\mathfrak{c}(b_2)$ that is contained in $\mathcal{U}_2\cap\mathcal{U}_3$, while ensuring that $(x_1(t), \dots, x_N(t)) \in \mathcal{O}_2$ for all $t \in [b_1,b_2]$. In this way, one can successively define a suitable control~$\zeta$ also on $(b_2, b_3], \dots, (b_{M-1}, T_{\mathfrak{f}}]$. Thanks to \Cref{corollary:approx2}, it can be simultaneously achieved that $|x_{N+1}(T_{\mathfrak{f}}) - x_{\mathfrak{f},N+1}| < \varepsilon/2$. Finally, $\zeta \in C^{\infty}_c((0,T_{\mathfrak{f}}); \mathbb{R}^d)$ is obtained by redefining the previous choice of $\zeta$ through \Cref{theorem:pert} by using a density argument.
		\end{proof}
		
		The next corollary establishes UAC in the sense of \Cref{definition:uac} (with $m=(N+1)d$ and $l = d$) from any initial state $x_{\mathfrak{in}}$ to a product of closed balls in arbitrary time. As controls that vanish at the endpoints are convenient, e.g., for the continuous gluing of iterated control stages, we work for  $T > 0$ with the space
		\[
			C^0_0([0,T];\mathbb{R}^d) \coloneq \{ \zeta \in C^0([0,T];\mathbb{R}^d) \, | \, \zeta(0) = \zeta(T) = 0 \},
		\]
		which is complete with the supremum norm. 
			
		\begin{corollary}\label{corollary:uacallparticles}
			Let $T_{\mathfrak{f}} > 0$ and $x_{\mathfrak{in}} \in \Delta_{N+1}^{\complement}$ be fixed. There are closed balls $\mathcal{B}_1, \dots,\mathcal{B}_{N+1} \subset \mathcal{M}$ with $\mathcal{B} = \mathcal{B}_1\times\dots\times\mathcal{B}_{N+1}\subset \Delta_{N+1}^{\complement}$ such that for any $\varepsilon > 0$ there is a continuous map
			\begin{equation}\label{equation:contmapphi}
				\Psi_{\varepsilon} \colon {\mathcal{B}} \longrightarrow  C^0_0([0,T_{\mathfrak{f}}];\mathbb{R}^d)
			\end{equation}
			that assigns to each $x_{\mathfrak{f}}  \in \mathcal{B}$ a control $\zeta = \Psi_{\varepsilon} (x_{\mathfrak{f}})$ for which the corresponding solution $x = (x_1, \dots, x_{N+1})$ to \eqref{equation:particlesystem_deterministic} is defined on $[0, T_{\mathfrak{f}}]$ and satisfies
			\begin{equation}\label{equation:act}
			\sum_{i = 1}^{N+1} |x_{i}(T_{\mathfrak{f}}) - x_{\mathfrak{f},i}| < \varepsilon.
			\end{equation} 
		\end{corollary}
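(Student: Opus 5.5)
The plan is to deduce the corollary from the continuous version of \Cref{corollary:approx2}, after fixing the geometry once and for all and adjusting the time interval. First choose $\ell>0$ and closed balls $\mathcal{B}_1,\dots,\mathcal{B}_{N+1}$ of diameter $\ell$ as in \Cref{corollary:approx1cont}. Concretely, take $\ell$ smaller than one tenth of $\min_{i\neq j}|x_{\mathfrak{in},i}-x_{\mathfrak{in},j}|$. Then place $\mathcal{B}_i$ at distance $\tfrac32\ell$ from $x_{\mathfrak{in},i}$. Since $d\geq2$, the directions can be chosen so that the balls have mutual distance at least $4\ell$ and avoid the straight segments that start at other initial points. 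These balls depend only on $x_{\mathfrak{in}}$, so $\mathcal{B}=\prod_i\mathcal{B}_i\subset\Delta_{N+1}^{\complement}$, and the conditions of \Cref{corollary:approx1cont} hold.

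Given $\varepsilon>0$, \Cref{corollary:approx2} gives, for every sufficiently small $\delta$, a continuous map $x_{\mathfrak{f}}\mapsto(\zeta,\beta_1,\dots,\beta_N)$ on $\mathcal{B}$ for which \eqref{equation:adi} holds with $\varepsilon/2$. In particular it yields $\sum_i|x_i(\beta\delta)-x_{\mathfrak{f},i}|<\varepsilon/2$ at the varying time $\beta\delta$. Because each $\beta_i\in\mathcal{J}_\ell$ is bounded, $\delta$ can be shrunk until $\beta\delta\leq 3N\ell|f(0)|^{-1}\|A^{-1}\|\delta<T_{\mathfrak{f}}$ uniformly in $x_{\mathfrak{f}}$.

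The main obstacle is that \eqref{equation:act} must hold at the fixed time $T_{\mathfrak{f}}$, while the control stage ends at the $x_{\mathfrak{f}}$-dependent time $\beta(x_{\mathfrak{f}})\delta$. Simply switching the control off afterwards does not work, since the free dynamics would then move the particles. I would handle this by running the construction backwards in time.

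\textbf{Step 1 (free flow and pullback of targets).} Set $\tau:=T_{\mathfrak{f}}/2$. Let $S_t$ be the flow of the uncontrolled system \eqref{equation:uncont}. Assume, after shrinking $\ell$, that the backward flow $S_{-s}$ is defined and continuous on a neighborhood of $\mathcal{B}$ for $s\in[0,\tau]$; this uses local well-posedness and compactness, together with shrinking $\mathcal{B}$ and $T_{\mathfrak{f}}$-independent constants. If $T_{\mathfrak{f}}$ is large, I would instead split $T_{\mathfrak{f}}$ into the active control time and a free time near $x_{\mathfrak{in}}$ placed at the start.

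\textbf{Step 2 (the control map).} For $x_{\mathfrak{f}}\in\mathcal{B}$, steer in the time $\beta(x_{\mathfrak{f}})\delta$ to a point $y$ close to $x_{\mathfrak{f}}$. Then apply \Cref{corollary:approx2} a second time from $y$, with a small reparametrized stage, so that the remaining interval $[\beta\delta,T_{\mathfrak{f}}]$ is filled by a stage that keeps all particles $\varepsilon/4$-close to where they are. This is \eqref{equation:adi} with target equal to the current state, with the $N$ consecutive stages stretched. Stretching is possible because \Cref{theorem:maintrajectorial}, through the stationary curve $\mathfrak{c}\equiv$ const, gives controls that hold particles approximately still over any prescribed interval, continuously in the data. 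The continuity of this second map is exactly what needs checking. It follows from the continuity statements of \Cref{corollary:approx1cont}, which depend continuously on the initial point as long as that point varies in a small ball; for this the balls $\mathcal{B}_i$ are chosen with slack. Continuity also uses \Cref{theorem:pert} to pass continuity of the control to continuity of the solution.

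\textbf{Step 3 (boundary conditions and conclusion).} Finally, make the control vanish at $0$ and $T_{\mathfrak{f}}$, so that $\zeta\in C^0_0$. Since \eqref{equations:approx_cor} gives $\dot x_{N+1}=0$ at the stage endpoints, formula \eqref{equation:zetaformula_} yields $\zeta(0)=-F_{N+1}(x_{\mathfrak{in}})$, which need not vanish. I would fix this by modifying $\zeta$ on $[0,\eta]\cup[T_{\mathfrak{f}}-\eta,T_{\mathfrak{f}}]$ with a continuous cutoff. \Cref{theorem:pert} shows that for small $\eta$, chosen uniformly over the compact set $\mathcal{B}$, the endpoint moves by less than $\varepsilon/4$. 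Composing these continuous operations gives $\Psi_\varepsilon$ with \eqref{equation:act}.
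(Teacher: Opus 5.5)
\textbf{There is a genuine gap, in your Step 2.}

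After the target-dependent stage of \Cref{corollary:approx2}, you fill the interval $[\beta(x_{\mathfrak{f}})\delta,T_{\mathfrak{f}}]$ with a holding control. Its starting point $y=y(x_{\mathfrak{f}})$ and its duration $T_{\mathfrak{f}}-\beta(x_{\mathfrak{f}})\delta$ both vary with $x_{\mathfrak{f}}$. So $\Psi_\varepsilon$ is continuous only if the holding control depends continuously on the initial state and on the interval length. Nothing in the paper supplies this:
\begin{itemize}
\item \Cref{corollary:approx1cont} and \Cref{corollary:approx2} give continuity only in the target, for a fixed initial state.
\item \Cref{theorem:maintrajectorial} has no continuity statement. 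Its proof alternates free flow and corrective pushes a data-dependent finite number of times.
\item A stage of \Cref{corollary:approx2} cannot be stretched to length close to $T_{\mathfrak{f}}$, because the approximation requires $\delta$ small.
\end{itemize}
Step 1 does not help either. It is never used in Step 2. Its assumption that the backward uncontrolled flow exists near $\mathcal{B}$ for time $T_{\mathfrak{f}}/2$ cannot be obtained by shrinking $\ell$: the balls sit near $x_{\mathfrak{in}}$, and the backward flow from there may collapse sooner (e.g.\ the backward repulsive Riesz flow is attractive). Even if it existed, the pulled-back target set would not be a product of balls with the geometry required by \Cref{corollary:approx1cont}. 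Its time shift would also depend circularly on $\beta(x_{\mathfrak{f}})\delta$.

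\textbf{The missing idea is to reverse the order of the stages.} Once you do, your claim that switching the control off afterwards fails is no longer true. The paper's proof runs as follows.
\begin{enumerate}
\item By local well-posedness and compactness of $\mathcal{B}$, fix $s_0\in(0,T_{\mathfrak{f}})$ such that uncontrolled solutions starting $\varepsilon/3$-close to any $x_{\mathfrak{f}}\in\mathcal{B}$ stay $\varepsilon/2$-close for time $s_0$.
\item On $[0,T_{\mathfrak{f}}-s_0]$, apply \Cref{theorem:maintrajectorial} once, with the constant curve $\mathfrak{c}\equiv(x_{\mathfrak{in},1},\dots,x_{\mathfrak{in},N})$. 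This lands the state in a small neighbourhood $U_{\mathfrak{in}}$ of $x_{\mathfrak{in}}$ with $\operatorname{dist}(U_{\mathfrak{in}},\mathcal{B})\in(\ell,2\ell)$. The control is independent of $x_{\mathfrak{f}}$, so no continuity is needed here.
\item From the fixed point $x(T_{\mathfrak{f}}-s_0)$, use the continuous map of \Cref{corollary:approx2} to reach the $\varepsilon/3$-neighbourhood of $x_{\mathfrak{f}}$ within time $\beta\delta<s_0$.
\item Switch the control off. The short residual free flow keeps the state within $\varepsilon/2$ of $x_{\mathfrak{f}}$, uniformly on $\mathcal{B}$.
\item Apply cutoffs near $0$, $T_{\mathfrak{f}}$ and the stage junctions, justified by \Cref{theorem:pert} as in your Step 3, to obtain $\Psi_\varepsilon(x_{\mathfrak{f}})\in C^0_0([0,T_{\mathfrak{f}}];\mathbb{R}^d)$.
\end{enumerate}
Your remark about placing a phase near $x_{\mathfrak{in}}$ at the start points in this direction. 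However, that phase must be actively controlled rather than free, and it must come first for every $T_{\mathfrak{f}}$, not only large ones. The only continuity then needed is continuity in the target from a fixed initial point, which is exactly what \Cref{corollary:approx2} provides.
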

		\begin{proof}
			For sufficiently small $\ell > 0$, which depends on the choice of $x_{\mathfrak{in}} \in \Delta_{N+1}^{\complement}$, let $\mathcal{B}_1, \dots, \mathcal{B}_{N+1}$ be provided by \Cref{corollary:approx2}. Also, fix any $\varepsilon > 0$ and without loss of generality assume $\varepsilon \ll \operatorname{dist}(\mathcal{B},\Delta_{N+1})$, which ensures below that approximations of targets in $\mathcal{B}$ remain at a uniform distance to the diagonal. 
			By local well-posedness and compactness of ${\mathcal{B}}$, there exists a time $s_0 \in (0, T_{\mathfrak{f}})$ such that for all $x_{\mathfrak{f}} \in {\mathcal{B}}$ the solutions to the uncontrolled problem \eqref{equation:uncont} with initial data from an $(\varepsilon/3)$-neighborhood of $x_{\mathfrak{f}}$ cannot exit the $(\varepsilon/2)$-neighborhood of $x_{\mathfrak{f}}$ in time $s_0$.

			To define the map $\Psi_{\varepsilon} $, let $x_{\mathfrak{f}} \in {\mathcal{B}}$ be arbitrary, set $\mathfrak{c}(t) \coloneq (x_{\mathfrak{in},1}, \dots, x_{\mathfrak{in},N})$ for all $t\geq 0$, and apply \Cref{theorem:maintrajectorial} on the time interval $[0, T_{\mathfrak{f}}-s_0]$ with target $(\mathfrak{c}(T_{\mathfrak{f}}-s_0), x_{\mathfrak{in},N+1}) = x_{\mathfrak{in}}$ to steer the system into a small neighborhood~$U_{\mathfrak{in}}$ of $x_{\mathfrak{in}}$ with $\operatorname{dist}(U_{\mathfrak{in}}, \mathcal{B}) \in (\ell, 2\ell)$, using (by density \& \Cref{theorem:pert}) a control that vanishes at $t = 0$ and $t = T_{\mathfrak{f}}-s_0$. By the choice of $U_{\mathfrak{in}}$, \Cref{corollary:approx2} can be applied starting from any state in $U_{\mathfrak{in}}$ with target in $\mathcal{B}$ as fixed above. Moreover, as the choice of~$s_0$ is uniform with respect to~$x_{\mathfrak{f}}$ from $\mathcal{B}$, this control can be taken independently of~$x_{\mathfrak{f}}$. Finally, \Cref{corollary:approx2} provides a control that drives the system from $x(T_{\mathfrak{f}}-s_0)$ to the $\varepsilon/3$-neighborhood of $x_{\mathfrak{f}}$ in a time $s \in (0,s_0)$ that is independent of~$x_{\mathfrak{f}}$ from~$\mathcal{B}$. In particular, \Cref{corollary:approx2} ensures that this control depends continuously on such~$x_{\mathfrak{f}}$. On $[T_{\mathfrak{f}}-s_0+s, T_{\mathfrak{f}}]$, set the control zero. 
			
			To avoid that the above determined control jumps at $t = T_{\mathfrak{f}}-s_0$ and $t = T_{\mathfrak{f}}-s_0+s$, one can multiply it with a smooth cutoff that has values
			\[
				\begin{cases}
					1, & \mbox{ if } t \in [0, T_{\mathfrak{f}}-s_0-\widetilde{s}] \cup [T_{\mathfrak{f}}-s_0+\widetilde{s}, T_{\mathfrak{f}}-s_0+s-\widetilde{s}],\\
					0, & \mbox{ if } t \in [T_{\mathfrak{f}}-s_0-\widetilde{s}/2, T_{\mathfrak{f}}-s_0+\widetilde{s}/2] \cup [T_{\mathfrak{f}}-s_0+s-\widetilde{s}/2, T_{\mathfrak{f}}]
				\end{cases}
			\]
			for sufficiently small $\widetilde{s} \in (0,s/3)$ such that \Cref{theorem:pert} ensures \eqref{equation:act} for the corresponding perturbed problem. As $\widetilde{s}$ can be fixed independently of $x_{\mathfrak{f}} \in \mathcal{B}$, the proof is complete.
		\end{proof}

		\subsection{Exact and solid controllability}
		We are now in the position to complete the proof of our main controllability result.
		
		\begin{proof}[Proof of \Cref{theorem:maincontroltheorem}]
		The statement on solid controllability follows by combining \Cref{corollary:uacallparticles} with \Cref{proposition:ucisc}. In particular, if $(\Psi_{\varepsilon})_{\varepsilon > 0}$ and $\mathcal{B}$ are obtained via \Cref{corollary:uacallparticles}, then for $R > 0$ and $y \in \Delta_{N+1}^{\complement}$ with $B_{\mathcal{M}^{N+1}}(y, R) \subset \mathcal{B}$ and any $r \in (0, R)$ one can take the compact set $\mathcal{C} = \Psi_{r/2}(\overline{B}_{\mathcal{M}^{N+1}}(y, R))$ in \Cref{definition:solidcontrollability}.
		
		Global exact controllability in time $T_{\mathfrak{f}} > 0$ can be shown as follows. First, solid controllability from arbitrary $x_{\mathfrak{in}} \in \Delta_{N+1}^{\complement}$ in time $T_{\mathfrak{f}}/2$ with the choice \smash{$\Phi(\widehat{\zeta}) \coloneq x(T_{\mathfrak{f}}/2; x_{\mathfrak{in}}, \widehat{\zeta})$} for \smash{$\widehat{\zeta} \in \mathcal{C}$} in \Cref{definition:solidcontrollability} implies exact controllability in time $T_{\mathfrak{f}}/2$ from $x_{\mathfrak{in}} \in \Delta_{N+1}^{\complement}$ to any target $\widehat{x}_{\mathfrak{f}} \in \overline{B}_{\mathcal{M}^{N+1}}(y, R-r)$.  Now, one can argue by time-reversibility: given any target $x_{\mathfrak{f}} \in \Delta_{N+1}^{\complement}$, apply solid controllability with the target $\widehat{x}_{\mathfrak{f}} = \overline{x}(T_{\mathfrak{f}}/2)$, where $\overline{x}$ solves the reversed problem
		\begin{equation}
			\label{equation:particlesystem_deterministic_reversed}
			\begin{cases}
				\dot{\overline{x}}_i = -K(\overline{x}_i-\overline{x}_{N+1}) - F_i(\overline{x}), \\
				\dot{\overline{x}}_{N+1} = -F_{N+1}(\overline{x}) + \overline{\zeta},\\
				\overline{x}(0) = x_{{\mathfrak{f}}}
			\end{cases}
		\end{equation}
		with $\overline{\zeta}$ chosen via \Cref{theorem:maintrajectorial} such that $\overline{x}(T_{\mathfrak{f}}/2) \in B_{\mathcal{M}^{N+1}}(y, R-r)$. Here, we used that \eqref{equation:particlesystem_deterministic_reversed} is of the same form as \eqref{equation:particlesystem_deterministic}, thus all previously obtained controllability statements hold for it, as well. Finally, by taking the control
		\[
			\zeta(t) \coloneq \begin{cases}
				\widehat{\zeta}(t) & \mbox{ if } t \in [0, T_{\mathfrak{f}}/2),\\
				- \overline{\zeta}(T_{\mathfrak{f}}-t) & \mbox{ if } t \in [T_{\mathfrak{f}}/2, T_{\mathfrak{f}}],
			\end{cases}
		\]
		it follows that $x(T_{\mathfrak{f}}; x_{\mathfrak{in}}, \zeta) = x_{\mathfrak{f}}$. As $\widehat{\zeta}$ and~$\overline{\zeta}$ vanish at $T_{\mathfrak{f}}/2$, the control $\zeta$ defined above is continuous.
	\end{proof}

		\section{Exponential mixing for stochastic systems}\label{section:expmixing}
		
		\Cref{theorem:maincontroltheorem} together with an abstract criterion from \cite{MertzNersesyanRissel2024} yield exponential mixing for particle systems driven by degenerate stochastic noise
		\begin{equation}\label{equation:S1}
			\begin{cases}
				\dot{x}_i  = K_i(x_i-x_{N+1}) + F_i(x), & i \in \{1,\dots,N\}, \\ 
				\dot{x}_{N+1}  = F_{N+1}(x) + \zeta,\\
				x(0) = x_{\mathfrak{in}} \in \Delta^{\complement}_{N+1},
			\end{cases}
		\end{equation}
		where $\zeta$ is a general decomposable noise as introduced below and the interactions
        \[
            K_1,\dots,K_N\colon \mathcal{M}\setminus \{0\} \longrightarrow \mathbb{R}^d, \quad F_1, \dots, F_{N+1}\colon \Delta^{\complement}_{N+1} \to \mathbb{R}^d
        \]
        are of the same form as in \eqref{equation:particlesystem_deterministic}, but additionally assumed to be smooth on their respective domains.

Since the right-hand side in \eqref{equation:S1} is of the form treated in \Cref{section:proofofmainresult}, the deterministic controllability results of that section apply directly: by \Cref{theorem:maincontroltheorem}, the system \eqref{equation:S1} is exactly controllable and solidly controllable from any point $x_{\mathfrak{in}} \in \Delta^{\complement}_{N+1}$ in any time.

		 Throughout, let $\mathcal{P}(\Delta^{\complement}_{N+1})$ denote the Borel probability measures on $\Delta^{\complement}_{N+1}$, endowed with the total variation distance  
		\begin{equation}\label{equation:var}
			\begin{aligned}
				\|\mu_1-\mu_2\|_{\operatorname{var}} \coloneq  
				\sup_{ \Gamma\in\mathcal{B}(\Delta^{\complement}_{N+1})} |\mu_1(\Gamma)-\mu_2(\Gamma)|,
			\end{aligned}
		\end{equation}
		 where  $\mathcal{B}(\Delta^{\complement}_{N+1})$ is the Borel~$\sigma$-algebra on $\Delta^{\complement}_{N+1}$.  Moreover, let $L^{\infty}(\Delta^{\complement}_{N+1})$ be the space of bounded measurable functions $f\colon \Delta^{\complement}_{N+1}\longrightarrow \mathbb{R}$, equipped with the supremum norm $\|f\|_\infty \coloneq \sup_{x\in \Delta^{\complement}_{N+1}} |f(x)|$.

		\subsection{Decomposable noise} \label{subsection:decomposablenois}

		Given $L^2((0,1);\mathbb{R}^d)$-valued i.i.d. random variables $(\eta_k)_{k \in \mathbb{N}}$, decomposable noise refers to a random process of the form
		\begin{equation}\label{noise}
			\zeta(t) = \sum_{k=1}^{\infty} \mathbb{I}_{[k-1,k)}(t)\eta_k(t+1-k),
		\end{equation}
		provided that the family $(\eta_k)_{k \in \mathbb{N}}$ satisfies the following assumption.
		
		\begin{description}[leftmargin=0pt,labelindent=0pt]
			\item[\hypertarget{D}{(D)} Decomposability.] 
			{\itshape  The random variables $\{\eta_k\}_{k \in \mathbb{N}}$ are independent copies of a random variable of the form $\sum_{j=1}^\infty b_j\xi_j e_j$, where
				\begin{itemize}
					\item $b_j > 0$ for all $j\in \mathbb{N}$ and $\sum_{j=1}^\infty b_j^2 < \infty$,
					\item  $\{\xi_{j}\}_{j\in\mathbb{N}}$ 
					are independent scalar random variables, each having the same positive continuous density $\rho$ with respect to the Lebesgue measure such that $\int_{-\infty}^{+\infty} s^2 \rho(s)d s<\infty$,
					\item $\{e_j\}_{j\in\mathbb{N}}$ is an orthonormal basis in~$L^2((0,1);\mathbb{R}^d)$.
			\end{itemize}}
		\end{description} In particular, this assumption implies $\mathbb{E}\|\eta_1\|_{L^2}^2 < \infty$, and the support of the law of $\eta_1$ is all of $L^2((0,1);\mathbb{R}^d)$.
        
We impose a Lyapunov assumption on the system \eqref{equation:S1}.
		\begin{description}[leftmargin=0pt,labelindent=0pt]
			\item[\hypertarget{Ld}{(L)} Lyapunov assumption.] 
			{\itshape There are $\alpha > 0$ and $C > 0$, and a $C^1$-function $V\colon \Delta^{\complement}_{N+1}\longrightarrow [1,\infty)$ with compact sublevel set~$\{V \leq R\}$ for each~$R \geq 1$ such that the following statement holds. Given any $\eta\in L^2((0,1);\mathbb{R}^d)$, $x_{\mathfrak{in}} \in \Delta^{\complement}_{N+1}$, and solution $x$ of \eqref{equation:S1} with $\zeta=\eta$ and $x(0) = x_{\mathfrak{in}}$, one has
			\begin{equation}\label{equation:deterministic_Lyapunov}
					\frac{d}{d t} V(x(t)) \leq -\alpha V(x(t)) + C\left(1+|\eta(t)|^2\right)
			\end{equation}
			for almost all $t \in [0,1]$.}
		\end{description}

In particular, Gr\"onwall's inequality applied to \eqref{equation:deterministic_Lyapunov}, together with the compactness of the sublevel set $\{V \leq R\}$ for~$R \geq 1$, ensures that for every $\eta \in L^2((0,1);\mathbb{R}^d)$ and $x_{\mathfrak{in}} \in \Delta^{\complement}_{N+1}$ the corresponding solution of \eqref{equation:S1} with $\zeta = \eta$ exists on all of $[0,1]$ and remains in $\Delta^{\complement}_{N+1}$.

Under the above assumptions, the trajectories of \eqref{equation:S1} with $\zeta$ given by \eqref{noise},  restricted to integer times $k$, that is $x_k=x(k)$, define a Markov family $\{x_k,\mathbb{P}_x\}_{k\ge 1}$ with transition probabilities $\{P_k(x,\Gamma)\}_{k\ge 1}$ (see~\cite[Section~1.3]{KS-12}). The associated Markov semigroups are given by
		\begin{gather*}
			\mathfrak{P}_k \colon L^{\infty}(\Delta^{\complement}_{N+1})  \longrightarrow L^{\infty}(\Delta^{\complement}_{N+1}), \quad \mathfrak{P}_k f(x) \coloneq \int f(y)\, P_k(x, {d} y), \\
			\mathfrak{P}_k^* \colon \mathcal{P}(\Delta^{\complement}_{N+1}) \longrightarrow \mathcal{P}(\Delta^{\complement}_{N+1}), \quad \mathfrak{P}_k^* \mu(\Gamma) \coloneq \int P_k(x,\Gamma)\, \mu(d x)
		\end{gather*}
		for $k \geq 1$, and we recall that a probability measure $\mu \in \mathcal{P}(\Delta^{\complement}_{N+1})$ is stationary if $\mathfrak{P}_1^*\mu = \mu$.
		
		\begin{theorem}\label{theorem:main_decomposable}
			 Under assumptions  \hyperlink{D}{\rm{(D)}}  and \hyperlink{Ld}{\rm{(L)}}, the family $\{x_k, \mathbb{P}_x\}_{k \ge 1}$ admits a unique stationary measure $\mu \in \mathcal{P}(\Delta^{\complement}_{N+1})$ that is exponentially mixing. More precisely, there exist constants $\gamma > 0$ and $C>0$ such that
			\begin{equation*}
				\|\mathfrak{P}_k^* \lambda -\mu\|_{\operatorname{var}} \leq C\operatorname{e}^{-\gamma k}  \langle V, \lambda \rangle_{\Delta^{\complement}_{N+1}} \boxnote{\qquad \qquad \qquad \, (k \in \mathbb{N})}
			\end{equation*}
			for each $\lambda\in \mathcal{P}(\Delta^{\complement}_{N+1})$ with
			\[
				\langle V, \lambda \rangle_{\Delta^{\complement}_{N+1}} = \int_{\Delta^{\complement}_{N+1}} V(x) \lambda(d x)< \infty.
			\]
		\end{theorem}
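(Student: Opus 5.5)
The plan is to verify the hypotheses of the abstract criterion from \cite{MertzNersesyanRissel2024} (the non-compact extension of \cite{Sh-17}) for the random dynamical system $x_k = S(x_{k-1},\eta_k)$. Here $S\colon \Delta^{\complement}_{N+1}\times L^2((0,1);\mathbb{R}^d)\to\Delta^{\complement}_{N+1}$ denotes the time-one map of \eqref{equation:S1}. That criterion requires four ingredients: (i) regularity of $S$; (ii) a Lyapunov/recurrence structure; (iii) approximate controllability to a distinguished point from compact sets; and (iv) local solid controllability at that point, together with a decomposability/non-degeneracy property of the law of $\eta_1$. I will check these in that order.

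For (i), I will use assumption \hyperlink{Ld}{\rm{(L)}} and Gr\"onwall to show that solutions exist on $[0,1]$ and stay in a compact sublevel set $\{V\le R\}$ that depends only on $V(x_{\mathfrak{in}})$ and $\|\eta\|_{L^2}$. Since the vector field is smooth away from $\Delta_{N+1}$, standard ODE theory then makes $S$ continuous, indeed $C^1$, on $\Delta^{\complement}_{N+1}\times L^2$. For (ii), integrating \eqref{equation:deterministic_Lyapunov} gives
\[
\mathbb{E}V(x_1)\le e^{-\alpha}V(x_{\mathfrak{in}})+C'\bigl(1+\mathbb{E}\|\eta_1\|_{L^2}^2\bigr),
\]
which is the discrete Lyapunov inequality. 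Assumption \hyperlink{D}{\rm{(D)}} guarantees the right-hand side is finite, and from it the exponential moment bounds on hitting times of $\{V\le R\}$ follow in the standard way.

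For (iii), fix $\hat x\in\Delta^{\complement}_{N+1}$. For each $x$ in the compact set $\{V\le R\}$, \Cref{theorem:maincontroltheorem} (exact controllability in time $1$) gives a control $\zeta_x$ with $S(x,\zeta_x)=\hat x$. I will then approximate $\zeta_x$ in $L^2$ by a continuous control and use continuity of $S$ together with compactness to get a uniform statement: for each $\varepsilon>0$ there is a finite family of controls such that every starting point in $\{V\le R\}$ reaches $B(\hat x,\varepsilon)$. Because the support of the law of $\eta_1$ is all of $L^2$, this yields the required uniform lower bound
\[
\inf_{x\in\{V\le R\}} P_1\bigl(x,B(\hat x,\varepsilon)\bigr)>0 .
\]

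Step (iv) is the main obstacle. The criterion needs, for $x$ near $\hat x$, that the image of the law of $\eta_1$ under $S(x,\cdot)$ has a component absolutely continuous with respect to Lebesgue measure, with density bounded below uniformly near $\hat x$. The approach I would try first is to use the solid controllability of \Cref{theorem:maincontroltheorem} from $\hat x$, realized via \Cref{corollary:uacallparticles} and \Cref{proposition:ucisc}. Choose $\hat x$ such that solidity produces a ball $B$, and take $\Phi=S(x,\cdot)$ restricted to the compact set $\mathcal{C}$ for $x$ close to $\hat x$; this is a small perturbation of $S(\hat x,\cdot)$, so $B\subset S(x,\mathcal{C})$. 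From surjectivity onto a ball, together with smoothness of $S$, Sard's theorem gives a control $\zeta_0$ at which $D_\zeta S(\hat x,\zeta_0)$ has full rank $(N+1)d$. Projecting onto finitely many directions $e_1,\dots,e_n$ and using the positive continuous densities $\rho$ from \hyperlink{D}{\rm{(D)}}, the image measure then has a locally positive density near $S(\hat x,\zeta_0)$, uniformly for $x$ near $\hat x$. Finally, I will adjust $\hat x$ to be this image point, combining (iii) with one further step. With all hypotheses verified, the criterion yields existence and uniqueness of $\mu$ and the bound $C e^{-\gamma k}\langle V,\lambda\rangle$.
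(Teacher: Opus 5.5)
Your plan is essentially the paper's: its proof checks Conditions~1--4 of Theorem~2.8 in \cite{MertzNersesyanRissel2024} for the time-one map $S$. These are the bound $\mathbb{E}_xV(x_1)\le e^{-\alpha}V(x)+A$ from \hyperlink{Ld}{\rm{(L)}}, approximate controllability to a fixed $p$ and solid controllability from $p$ (both read off from \Cref{theorem:maincontroltheorem}), smoothness of $S$, and decomposability from \hyperlink{D}{\rm{(D)}}. That is what your steps (i)--(iii) and the first half of (iv) do.

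The remainder of (iv) is not needed, because the criterion takes solid controllability plus regularity as its hypothesis and derives the density lower bound internally. If you do carry it out yourself, note that Sard's theorem need not hold for smooth maps from an infinite-dimensional Banach space to $\mathbb{R}^{(N+1)d}$. So first apply \Cref{definition:solidcontrollability} with $\Phi(\zeta)=S(\hat x,P_n\zeta)$, where $P_n$ is the projection onto $\mathrm{span}\{e_1,\dots,e_n\}$; this $\Phi$ is a small perturbation on $\mathcal{C}$ by compactness. It shows that $S(\hat x,\cdot)$ restricted to this finite-dimensional span still covers a ball. Only then apply Sard there, which uses that $S$ is $C^\infty$ rather than merely $C^1$.
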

		\begin{proof}Let us set  $E \coloneq L^2((0,1);\mathbb{R}^d)$ and consider the map
\[
		S\colon \Delta^{\complement}_{N+1} \times E \longrightarrow \Delta^{\complement}_{N+1}, \quad (x_{\mathfrak{in}},\eta) \mapsto x(1), 
	\]	
    where $x(t)$ denotes the solution of \eqref{equation:S1} with initial condition $x_{\mathfrak{in}} \in \Delta^{\complement}_{N+1}$ and forcing $\zeta = \eta\in E$. Owing to \eqref{equation:deterministic_Lyapunov} and the smoothness of $K_i, F_i$, this map is globally well-defined and infinitely Fr\'echet differentiable. The sequence $(x_k)_{k \ge 1}$, defined by~$x_k = x(k)$ for $k \ge 1$, satisfies the relation $x_k=S(x_{k-1},\eta_k)$.

	To complete the proof, we verify Conditions~1--4 of the abstract criterion in Theorem~2.8 of~\cite{MertzNersesyanRissel2024}.  As required by this reference, we fix any compatible Riemannian metric on $\Delta_{N+1}^{\complement}$.

	{\it Condition~1 (Lyapunov structure).} 
 	Multiplying the inequality \eqref{equation:deterministic_Lyapunov} by~$\operatorname{e}^{\alpha t}$, integrating over~$[0,1]$, and taking the expectation yields the required estimate $\mathbb{E}_x V(x_1) \leq q V(x) + A$ with $q = \operatorname{e}^{-\alpha } \in (0,1)$ and $A =  {C}(1 + \mathbb{E}\|\eta_1\|_E^2)$.

  	{\it Condition~2 (Approximate controllability).} Let us fix any $p \in \Delta^{\complement}_{N+1} $. The global exact controllability of \Cref{theorem:maincontroltheorem} implies that any initial state can be steered into an arbitrarily small neighborhood of~$p$ in time $1$, which is the approximate controllability required by~\cite{MertzNersesyanRissel2024}.

	{\it Condition~3 (Solid controllability).} This condition is composed of a regularity part and a solid controllability part, both at the point~$p$ fixed above. The regularity requirement is satisfied because $S$ is infinitely differentiable, and solid controllability from $p$ in time $1$ follows via~\Cref{theorem:maincontroltheorem}. 

	{\it Condition~4 (Decomposability).} The law of $\eta_k$ satisfies this condition thanks to assumption~\hyperlink{D}{\rm{(D)}}.

	In conclusion, Theorem~2.8 of~\cite{MertzNersesyanRissel2024} yields the existence and uniqueness of the stationary measure $\mu \in \mathcal{P}(\Delta^{\complement}_{N+1})$ and its exponential mixing.
		\end{proof}

		\subsection{Riesz-type interacting particle systems}\label{subsection:Riesz}
		
		The previous results are illustrated for an explicit class of interacting particle systems with Riesz-type potentials. We focus only on this representative example, as it covers all spatial dimensions $d \geq 2$,  but similar results can be obtained likewise for other systems such as those listed in \Cref{subsubsection:examples}, including the point vortex system.

Consider a system of $N+1$ particles that are located at time $t \geq 0$ at the positions $x(t) = (x_1(t),\dots,x_{N+1}(t)) \in \Delta^{\complement}_{N+1}$, which evolve according to the gradient flow of the Riesz interaction energy
\begin{equation}\label{equation:RieszEnergyApplication}
	\mathcal{H}_{N+1}(x_1,\dots,x_{N+1}) = \sum_{1 \leq i < j \leq {N+1}} a_{ij}g(x_i - x_j), \quad g(x) \coloneq \frac{c_s}{|x|^{s}},
\end{equation}
where $s \in (0,d)$, $d\ge2$, and $c_s>0$ is a normalization constant.  
For simplicity, it is assumed here that all interaction intensities are equal to one, that is $a_{ij} = 1$ for $1\leq i < j \leq N+1$, so that the energy $\mathcal{H}_{N+1}$ is purely repulsive with $\mathcal{H}_{N+1} > 0$ on $\Delta^{\complement}_{N+1}$. Moreover, after introducing additional damping and forcing (see \eqref{eq:damped_ODE_riesz}) one can verify \hyperlink{Ld}{\rm{(L)}} as discussed next. 

Concerning global well-posedness and Lyapunov structure, we first note that the identity
\begin{equation}\label{equation:e}
	\sum_{i=1}^{N+1} \langle x_i, \nabla_{x_i} \mathcal{H}_{N+1}(x)\rangle = -s\,\mathcal{H}_{N+1}(x)
	\boxnote{\qquad  \qquad (x \in \Delta^{\complement}_{N+1})}
\end{equation}
holds because $g$ and $\mathcal{H}_{N+1}$ are positively homogeneous of degree $-s$. Now, the deterministic motions of undamped Riesz particles with energy $\mathcal{H}_{N+1}$ are governed by  
\begin{equation}\label{equation:ODE_riesz}
	\dot x_i(t) = -\nabla_{x_i} \mathcal{H}_{N+1}(x_1(t),\dots,x_{N+1}(t)),
\end{equation}
where
\begin{equation}\label{equation:gradient}
	\nabla_{x_i} \mathcal{H}_{N+1}(x) = - s\,c_s \sum_{1 \leq j \neq i \leq N+1}  \frac{x_i - x_j}{|x_i - x_j|^{s+2}}
\end{equation}
for all $1 \leq i \leq N+1$. Thus, along any solution of \eqref{equation:ODE_riesz}, the interaction energy is non-increasing:
\[
	\frac{d}{dt} \mathcal{H}_{N+1}(x(t)) = - \sum_{i=1}^{N+1} \left|\nabla_{x_i} 	\mathcal{H}_{N+1}(x(t))\right|^2 \leq 0.
\]
Moreover, as $g$ is repulsive and blows up at the origin, one has $\mathcal{H}_{N+1}(x) \to \infty$ as $x \to \Delta_{N+1}$. Hence, the boundedness of the energy keeps the minimal interparticle distance strictly positive on any finite time interval. Further, by using~\eqref{equation:e} one has
\begin{equation}\label{equation:atmostlineargrowth}
	\begin{aligned}
		\frac{d}{dt} \sum_{i=1}^{N+1} |x_i(t)|^2
		&= -2 \sum_{i=1}^{N+1} \langle x_i(t), \nabla_{x_i} \mathcal{H}_{N+1}(x(t))\rangle
		= 2s\,\mathcal{H}_{N+1}(x(t)) \\&\leq 2s\,\mathcal{H}_{N+1}(x(0)),
	\end{aligned}
\end{equation}
which shows that $\sum_i |x_i(t)|^2$ grows at most linearly in $t$. Since the minimal interparticle distance remains bounded below on every finite time interval, the gradient \eqref{equation:gradient} and the vector field in \eqref{equation:ODE_riesz} stay uniformly bounded there. Combined with \eqref{equation:atmostlineargrowth}, this prevents a finite-time blow-up and yields global well-posedness of~\eqref{equation:ODE_riesz}.

The following lemma is used to derive a Lyapunov estimate.  For the sake of completeness, a proof is briefly given in Appendix~\ref{section:Riesz}.
\begin{lemma}\label{lemma:coercivity}
	There is a constant $c_* > 0$, depending only on $N, d, s, c_s$, such that
	\begin{equation}\label{equation:coercivity}
		\sum_{i=1}^{N+1} \left|\nabla_{x_i}\mathcal{H}_{N+1}(x)\right|^2 \geq c_*\,\mathcal{H}_{N+1}(x)^{\frac{2s+2}{s}}
	\end{equation}
	for all $x \in \Delta^{\complement}_{N+1}$.
\end{lemma}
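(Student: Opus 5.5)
The plan is to exploit homogeneity and a scaling/compactness argument. Both sides of \eqref{equation:coercivity} are positively homogeneous of the same degree under dilations $x \mapsto \lambda x$, $\lambda>0$. Indeed, $\mathcal{H}_{N+1}(\lambda x) = \lambda^{-s}\mathcal{H}_{N+1}(x)$ and $\nabla_{x_i}\mathcal{H}_{N+1}(\lambda x) = \lambda^{-s-1}\nabla_{x_i}\mathcal{H}_{N+1}(x)$. Hence the left side scales like $\lambda^{-2s-2}$ and the right side like $\lambda^{-s\cdot(2s+2)/s} = \lambda^{-2s-2}$. The inequality is also invariant under translations $x_i \mapsto x_i + v$. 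It therefore suffices to prove it on the normalized set $\{\mathcal{H}_{N+1} = 1\}$, that is, to show $\inf_{\{\mathcal{H}_{N+1}=1\}} \sum_i |\nabla_{x_i}\mathcal{H}_{N+1}|^2 > 0$.

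The first key step is a lower bound in terms of $\mathcal{H}_{N+1}$ alone, via \eqref{equation:e}. By Cauchy--Schwarz,
\[
s\,\mathcal{H}_{N+1}(x) = \Big|\sum_i \langle x_i - \bar x, \nabla_{x_i}\mathcal{H}_{N+1}(x)\rangle\Big| \leq \Big(\sum_i |x_i-\bar x|^2\Big)^{1/2}\Big(\sum_i |\nabla_{x_i}\mathcal{H}_{N+1}(x)|^2\Big)^{1/2},
\]
where $\bar x$ is the barycenter. Replacing $x_i$ by $x_i-\bar x$ is allowed because $\sum_i \nabla_{x_i}\mathcal{H}_{N+1}=0$ by translation invariance. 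This gives
\[
\sum_i |\nabla_{x_i}\mathcal{H}_{N+1}|^2 \geq s^2\mathcal{H}_{N+1}^2 \Big/ \sum_i|x_i-\bar x|^2 .
\]
It then remains to bound the spread $D(x)^2 \coloneq \sum_i |x_i - \bar x|^2$ from above by a power of $\mathcal{H}_{N+1}^{-1}$. The target is $D^2 \leq C\,\mathcal{H}_{N+1}^{-2/s}$, which gives exactly the exponent $2 + 2/s = (2s+2)/s$.

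This bound is the main obstacle, and as stated it is false: a configuration with two clusters very far apart has $\mathcal{H}_{N+1}$ dominated by intra-cluster terms while $D$ is huge. So the naive Cauchy--Schwarz route must be localized. I would first try the following clustering argument, by induction on the number of particles. Fix a normalized configuration with $\mathcal{H}_{N+1}=1$. Let $m = \min_{i\neq j}|x_i-x_j|$; since $\mathcal{H}_{N+1}\geq c_s m^{-s}$ we get $m \geq c_s^{1/s}$, and a pair achieving a large term has separation comparable to $1$. Choose a scale $R$ (depending only on $N$) so that the particles split into groups whose within-group diameters are at most $R$ and whose mutual distances are at least $KR$, with $K$ large. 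Such a pigeonhole scale exists since there are at most $\binom{N+1}{2}$ pairwise distances. Pick a group containing a pair at distance $\lesssim 1$. Apply the identity \eqref{equation:e}, centered at this group's barycenter, to the sub-energy of that group only. This gives a lower bound on the group's internal force of order $\mathcal{H}_{\mathrm{group}}/R$. Cross-group forces on this group are $O((KR)^{-s-1})$ per pair, hence negligible for $K$ large. Finally, the group energy is at least a fixed fraction of $1$ up to these small cross terms.

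Alternatively, the obstacle can be handled by a compactness–contradiction argument. Take a sequence on $\{\mathcal{H}_{N+1}=1\}$ with gradients tending to $0$, translate and pass to subsequences so that particles split into clusters converging (after recentring each cluster) to limit configurations. Clusters that escape to infinity relative to one another decouple, and particles do not collide because $\mathcal{H}_{N+1}$ is bounded. Some cluster carries energy at least $1/\binom{N+1}{2}$ in the limit with its internal gradient vanishing. This contradicts \eqref{equation:e} applied to that cluster, since $\sum\langle x_i,\nabla_{x_i}\mathcal{H}\rangle = -s\mathcal{H} \neq 0$. I expect writing this multi-scale decoupling cleanly to be the hardest part. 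The homogeneity reduction and Euler identity are routine.
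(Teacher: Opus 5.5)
Your proposal is correct. Your compactness--contradiction route is essentially the paper's proof. The paper normalizes by the minimal interparticle distance $r(x)=1$ rather than by $\mathcal{H}_{N+1}=1$; the two are interchangeable, since $c_s\le\mathcal{H}_{N+1}\le\tfrac12 c_sN(N+1)$ on $\{r=1\}$. It then extracts a bounded cluster $\mathcal{G}$ with $|\mathcal{G}|\ge 2$ containing a minimizing pair, while the remaining particles escape to infinity, and contradicts the Euler identity for $\mathcal{H}_{|\mathcal{G}|}$ at the limit. The energy bound $1/\binom{N+1}{2}$ on the cluster is not needed there; $|\mathcal{G}|\ge 2$ and non-collision suffice.

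Your first route, the clustering argument, is a genuinely different, quantitative version of the same mechanism. It works once the pigeonhole is made precise:
\begin{itemize}
\item Set $M=\binom{N+1}{2}$ and $R_0=2(c_sM)^{1/s}$, so the pair carrying the largest energy term is at distance $<R_0$.
\item Choose $\rho\in\{R_0K^k\}_{0\le k\le M}$ with no pairwise distance in $[\rho,K\rho)$, and take the connected components of the graph of pairs at distance $<\rho$.
\item Because of chaining, the diameters are bounded by $N\rho$ rather than $\rho$. Separations are $\ge K\rho$, and the group $G$ of the top pair has $\mathcal{H}_G\ge 1/M$.
\item The localized Cauchy--Schwarz bound gives internal force $\gtrsim s\mathcal{H}_G/(N^{3/2}\rho)$, while cross forces are $\lesssim N^{3/2}sc_s(K\rho)^{-s-1}$.
\item Since $\rho^s\ge c_sM$, the ratio of cross to internal force is $\lesssim N^3K^{-s-1}$, so some $K=K(N,s)$ works.
\item Finally, $\rho\le R_0K^M$ gives an explicit $c_*$ of size roughly $K^{-2M}$.
\end{itemize}

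The paper's soft argument is shorter and avoids this multiscale bookkeeping, but it says nothing about $c_*$. Yours yields an explicit (if very poor) constant. It also shows concretely how localizing the Euler identity to a well-separated group repairs the failure of the global Cauchy--Schwarz bound that you correctly identified.
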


To construct a Lyapunov function satisfying the assumption \hyperlink{Ld}{\rm{(L)}}, consider the confining potential
\begin{equation}\label{equation:confiningpotential}
	Q(x_1,\dots,x_{N+1}) \coloneq \sum_{i=1}^{N+1} |x_i|^2,
\end{equation}
and define the total energy
\[
	V(x) \coloneq 1 + \mathcal{H}_{N+1}(x) + Q(x).
\]
Since $\mathcal{H}_{N+1}(x) \to \infty$ as $x \to \Delta_{N+1}$ and $Q(x) \to \infty$ as $|x| \to \infty$, the function $V \colon \Delta^{\complement}_{N+1} \longrightarrow [1,\infty)$ has compact sublevel sets in $\Delta^{\complement}_{N+1}$. 
Along solutions to the damped and forced dynamics
\begin{equation}\label{eq:damped_ODE_riesz}
	\dot x(t) = -\nabla \mathcal{H}_{N+1}(x(t)) - x(t) + \mathbb{B}\eta(t),
\end{equation}
where $\mathbb{B}\eta$ denotes the forcing acting on the $x_{N+1}$-component, the identity \eqref{equation:e} gives
\begin{equation}\label{eq:dotV}
	\frac{d}{dt} V(x(t))
	= - \Sigma(x(t))
	+ 3s\,\mathcal{H}_{N+1}(x(t)) - 2\,Q(x(t))
	+ \langle \nabla_{x_{N+1}} V(x(t)), \eta(t)\rangle,
\end{equation}
where $\Sigma \coloneq \sum_{i=1}^{N+1}|\nabla_{x_i}\mathcal{H}_{N+1}|^2$. For the last term on the right-hand side, the~bound
\[
	|\nabla_{x_{N+1}}V|^2 = |\nabla_{x_{N+1}}\mathcal{H}_{N+1} + 2x_{N+1}|^2 \leq 2\Sigma + 8Q
\]
together with Young's inequality gives
\[\langle \nabla_{x_{N+1}} V, \eta\rangle \leq \frac18\Sigma + \frac12 Q + 4|\eta|^2,
\]
so that \eqref{eq:dotV} becomes
\[
	\frac{d}{dt} V(x(t)) \leq -\frac78\Sigma(x(t)) + 3s\,\mathcal{H}_{N+1}(x(t)) - \frac32 Q(x(t)) + 4|\eta(t)|^2.
\]
Since $s > 0$ and $(2s+2)/s > 1$, the function $h \mapsto (3s+\alpha)h - \frac78 c_* h^{(2s+2)/s}$ is bounded above on $[0,\infty)$ for every $\alpha \in (0,\frac32]$ by some finite $A_0 = A_0(\alpha)$; hence \Cref{lemma:coercivity} yields
\[
	-\frac78\Sigma + 3s\,\mathcal{H}_{N+1} \leq -\frac78 c_*\,\mathcal{H}_{N+1}^{(2s+2)/s} + 3s\,\mathcal{H}_{N+1} \leq -\alpha\,\mathcal{H}_{N+1} + A_0.
\]
Combining this with $-\frac32 Q \leq -\alpha Q$ and fixing such an $\alpha$, there is a constant $C > 0$ with
\[
	\frac{d}{dt} V(x(t)) \leq -\alpha V(x(t)) + C\left(1+|\eta(t)|^2\right),
\]
which is precisely~\hyperlink{Ld}{\rm{(L)}}. Thus, by \Cref{theorem:main_decomposable}, exponential mixing holds for the Riesz-interacting particle system \eqref{eq:damped_ODE_riesz}.

		\section*{Acknowledgments}
		
		MR is grateful for the support by a start-up grant from ShanghaiTech University. 
		LM is thankful for support through NSFC Grant No.~12271364 and GRF Grant No.~11302823. VN and MR are partially supported by NSFC No.~12571156.
				
		\appendix

            \addcontentsline{toc}{section}{Appendix}
\addtocontents{toc}{\protect\setcounter{tocdepth}{-1}}

		\section{Appendix}
		\subsection{Perturbative result}\label{section:perturbative}
		The following known perturbative result can be found, for instance, in \cite[Theorem~4.4]{Gamkrelidze1978}.

		\begin{lemma}\label{theorem:pert}
			Let $m \in \mathbb{N}$, $O \subset \mathbb{R}^m$ open, $G \subset O$ open and bounded with $\overline{G} \subset O$, and $t_1 < t_2$. For ${x}_{{\tau}} \in G$ and ${\tau} \in (t_1,t_2)$, assume that ${x}\colon [t_1,t_2]\longrightarrow G$ solves
			\[
				\dot{{x}}(t) = {\mathscr{F}}(t,{x}(t)), \quad {x}({\tau}) = {x}_{{\tau}} \boxnote{\qquad \qquad  \quad (t \in [t_1,t_2]),}
			\]
			where ${\mathscr{F}}$ is continuous on $[t_1,t_2]\times O$ and $x \mapsto {\mathscr{F}}(t,x)$ is Lipschitz continuous on $O$ with  Lipschitz constant independent of $t$. Moreover, let $\widehat{\mathscr{F}}$ be another continuous function on $[t_1,t_2]\times O$ with $x \mapsto \widehat{\mathscr{F}}(t,x)$ Lipschitz continuous on~$O$ uniformly in~$t$. Then, given any $\varepsilon > 0$, there exists $\delta > 0$ such that
			\[
				|\widehat{\tau} - {\tau}| + |{x}_{{\tau}} - \widehat{x}_{\widehat{\tau}}| + \max\limits_{\substack{s,t \in [t_1, t_2],\\x \in \overline{G}}}\left|\int_s^t({\mathscr{F}} - \widehat{\mathscr{F}})(r, x) \, dr \right| < \delta
			\]
			implies that the equation
			\[
				\dot{\widehat{x}} = \widehat{\mathscr{F}}(t,\widehat{x}), \quad \widehat{x}(\widehat{\tau}) = \widehat{x}_{\widehat{\tau}}  \boxnote{\qquad \qquad \qquad \, (t \in [t_1,t_2]),}
			\]
			has a unique solution on $[t_1,t_2]$ with $\max_{t \in [t_1,t_2]}  |{x}(t) - \widehat{x}(t)|  \leq \varepsilon$.
		\end{lemma}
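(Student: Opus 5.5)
The statement is the classical continuous-dependence (Gamkrelidze-type) perturbation lemma. Its hypothesis only controls the \emph{integrated} difference $\int_s^t(\mathscr{F}-\widehat{\mathscr{F}})(r,x)\,dr$, not the pointwise difference. So I will use a Gronwall argument on the integral form, and then remove the dependence on the integral of the difference of vector fields along the moving curve by a Riemann-sum discretisation.

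\emph{Setting up the comparison.} Let $L$ be a common Lipschitz constant of $\mathscr{F}(t,\cdot)$ and $\widehat{\mathscr{F}}(t,\cdot)$ on $O$. Let $\rho>0$ satisfy $\rho<\operatorname{dist}(\overline{G},\partial O)$, and assume $\varepsilon<\rho$. Local Cauchy--Lipschitz theory gives a maximal solution $\widehat{x}$ through $(\widehat{\tau},\widehat{x}_{\widehat{\tau}})$. Let $I\subset[t_1,t_2]$ be the maximal interval containing $\widehat{\tau}$ on which $\widehat{x}$ exists and $|\widehat{x}-x|\le\varepsilon$. On $I$, $\widehat{x}$ stays in the compact $\rho$-neighbourhood of $\overline{G}$ inside $O$.

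For $t\in I$ we write
\[
\widehat{x}(t)-x(t)=\widehat{x}_{\widehat{\tau}}-x(\widehat{\tau})+\int_{\widehat{\tau}}^t\bigl(\widehat{\mathscr{F}}(r,\widehat{x})-\widehat{\mathscr{F}}(r,x)\bigr)dr+\int_{\widehat{\tau}}^t(\widehat{\mathscr{F}}-\mathscr{F})(r,x(r))\,dr .
\]
The first term is at most $|\widehat{x}_{\widehat{\tau}}-x_\tau|+|x_\tau-x(\widehat{\tau})|$. This is small because $x$ is uniformly continuous and both $|\widehat\tau-\tau|$ and $|\widehat{x}_{\widehat\tau}-x_\tau|$ are below $\delta$. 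The second term is at most $L\int|\widehat{x}-x|$.

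\emph{Main step: the last term.} Here $x(r)$ moves, so the hypothesis does not apply directly. Fix a partition $t_1=s_0<\dots<s_n=t_2$ with mesh $h$, and replace $x(r)$ by $x(s_k)\in\overline{G}$ on $[s_k,s_{k+1}]$. The error from this replacement is at most $2L\,\omega_x(h)(t_2-t_1)$, where $\omega_x$ is the modulus of continuity of $x$. The remaining sum has at most $n+1$ pieces, each of the form $\int_{s}^{s'}(\widehat{\mathscr{F}}-\mathscr{F})(r,x(s_k))\,dr$ with $x(s_k)\in\overline{G}$, so each is bounded by $\delta$. The total is therefore at most $2L\omega_x(h)(t_2-t_1)+(n+1)\delta$. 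We first choose $h$ so that the first term is smaller than $\varepsilon e^{-L(t_2-t_1)}/4$, and then choose $\delta$ small relative to $n$.

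\emph{Conclusion.} Gronwall's inequality gives $\sup_I|\widehat{x}-x|\le e^{L(t_2-t_1)}\cdot(\text{small})<\varepsilon/2$. By a continuation argument, $I$ is both open and closed in $[t_1,t_2]$, hence $I=[t_1,t_2]$. This shows existence on the whole interval together with the bound, and uniqueness follows from the Lipschitz property. The only delicate point is the discretisation step above: $h$ must be chosen before $\delta$, and no uniform smallness of $\mathscr{F}-\widehat{\mathscr{F}}$ itself is available.
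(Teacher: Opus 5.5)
Your proof is correct. The paper gives no argument of its own for this lemma; it only cites Gamkrelidze's Theorem~4.4, so there is nothing different to compare: your Gronwall estimate, with the perturbation term $\int(\widehat{\mathscr{F}}-\mathscr{F})(r,x(r))\,dr$ evaluated along the fixed trajectory $x$ and handled by freezing $x$ on a partition whose mesh is chosen before $\delta$, is the standard proof of that result.

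One point is worth recording. Your $\delta$ depends on $\widehat{\mathscr{F}}$ only through the common Lipschitz constant $L$, and a bound on $|\widehat{\mathscr{F}}|$ enters only qualitatively, in the continuation step. This is exactly the uniformity needed when the lemma is applied to the family indexed by $l$ in the proof of Theorem~\ref{corollary:approx1}. The dependence on $L$ cannot be removed: on $\mathbb{R}^2$, compare $\mathscr{F}\equiv 0$ with $\widehat{\mathscr{F}}_n(t,y)=\sqrt{n}\,(\cos(nt)\,e_1+\sin(nt)\,y_1e_2)$. Here $\int_s^t\widehat{\mathscr{F}}_n(r,y)\,dr=O(n^{-1/2})$ uniformly for bounded $y$, yet the trajectories of $\widehat{\mathscr{F}}_n$ drift by $(t/2)\,e_2$ as $n\to\infty$.
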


		\subsection{Solid controllability}\label{section:appendix}
		
		Let $m \in \mathbb{N}$ and denote by $\overline{B}_{\mathbb{R}^m}(z, R)$ the closed ball of radius $R>0$ in $\mathbb{R}^m$ with center $z \in \mathbb{R}^m$. The following result is Proposition 1.1 in \cite{Sh-2007}. 
		\begin{proposition} \label{P:1}
			Let $R > 0$, $\varepsilon \in (0,R)$, $z \in \mathbb{R}^m$, and $S\colon \overline{B}_{\mathbb{R}^m}(z,R) \to \mathbb{R}^m$ be a continuous map such that for all $x \in \overline{B}_{\mathbb{R}^m}(z,R)$ it holds $|S(x) - x| \leq \varepsilon$. Then $\overline{B}_{\mathbb{R}^m}(z, R - \varepsilon) \subset S \left(\overline{B}_{\mathbb{R}^m}(z,R)\right)$.
		\end{proposition}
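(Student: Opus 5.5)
The plan is a Brouwer-degree argument. I fix $\varepsilon\in(0,R)$, $z$, and a continuous $S$ with $|S(x)-x|\le\varepsilon$ on $\overline{B}\coloneq\overline{B}_{\mathbb{R}^m}(z,R)$. I then take an arbitrary target $y\in\overline{B}_{\mathbb{R}^m}(z,R-\varepsilon)$ and show $y\in S(\overline{B})$.

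I first dispose of the boundary case. If $y=S(x)$ for some $x\in\partial B$, we are done. Otherwise $y\notin S(\partial B)$, and I consider the homotopy
\[
H(\lambda,x)\coloneq (1-\lambda)x+\lambda S(x), \qquad \lambda\in[0,1],\ x\in\overline{B}.
\]

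The key estimate is that $y\notin H(\lambda,\partial B)$ for $\lambda\in[0,1)$. For $x\in\partial B$ one has $|H(\lambda,x)-x|=\lambda|S(x)-x|\le\lambda\varepsilon$. Hence
\[
|H(\lambda,x)-z|\ge R-\lambda\varepsilon>R-\varepsilon\ge|y-z|
\]
whenever $\lambda<1$. At $\lambda=1$ the claim $y\notin H(1,\partial B)$ is exactly the case assumption. So $y\notin H([0,1]\times\partial B)$.

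By homotopy invariance of the Brouwer degree, $\deg(S,B,y)=\deg(\mathrm{id},B,y)=1$, because $y$ lies in the open ball $B$: indeed $|y-z|\le R-\varepsilon<R$. Nonzero degree gives some $x\in B$ with $S(x)=y$.

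If one wishes to avoid degree theory, the same conclusion follows from Brouwer's fixed point theorem applied to a suitable retraction. The main obstacle is only the bookkeeping at $\lambda=1$ with non-strict inequalities, which the case split above handles. An alternative for that step is to approximate $y$ by points of the open ball $B(z,R-\varepsilon)$ and use compactness of $S(\overline{B})$ to conclude that $S(\overline{B})$ is closed.
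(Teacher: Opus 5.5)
Your proof is correct. The paper itself gives no argument for this statement: it only quotes Proposition~1.1 of \cite{Sh-2007}. The standard proof of that fact is a one-line application of Brouwer's fixed point theorem, so your degree argument is a genuinely different route.

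In the fixed-point argument, you fix $y\in\overline{B}_{\mathbb{R}^m}(z,R-\varepsilon)$ and set $G(x)\coloneq x-S(x)+y$. Then $|G(x)-z|\le|x-S(x)|+|y-z|\le\varepsilon+(R-\varepsilon)=R$, so $G$ maps $\overline{B}_{\mathbb{R}^m}(z,R)$ continuously into itself. A fixed point of $G$ is exactly a solution of $S(x)=y$.

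This route uses less machinery and handles the closed target ball directly. You need neither the case split at $\lambda=1$ nor the closure argument, which your approach requires because of the non-strict inequality $|y-z|\le R-\varepsilon$. It also shows that no retraction is needed, so the ``suitable retraction'' in your last paragraph is superfluous.

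Your homotopy argument, on the other hand, proves a stronger statement. It uses the bound $|S(x)-x|\le\varepsilon$ only for $x$ on the sphere $\partial B_{\mathbb{R}^m}(z,R)$, whereas the fixed-point argument needs it on the whole ball to make $G$ a self-map. It also yields $\deg(S,B_{\mathbb{R}^m}(z,R),y)=1$, which is stable under further small perturbations of $S$. For the application in this paper (Proposition~\ref{proposition:ucisc}), where the bound holds on the whole ball anyway, the elementary fixed-point version suffices.
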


		Let $D \subset \mathbb{R}^m$ be closed, $\mathscr{F}\colon \mathbb{R}^m\setminus D \longrightarrow \mathbb{R}^m$ Lipschitz continuous on~$\mathbb{R}^m\setminus U$ for any neighborhood $U$ of $D$, $l \in \mathbb{N}$, $\mathscr{B}\colon \mathbb{R}^l \longrightarrow \mathbb{R}^m$ a bounded linear operator, $T>0$, and $x_0 \in \mathbb{R}^m\setminus D$. Further, let $X$ be a Banach space with continuous embedding $X\hookrightarrow L^1([0,T]; \mathbb{R}^l)$. Finally, denote by $\mathcal{Q}(T,x_0)$ the set of controls
		$\zeta \in X$ for which the problem 
		\begin{equation}\label{equation:S}
			\dot{x}(t) = \mathscr{F}(x(t)) + \mathscr{B}\zeta(t), \quad x(0) = x_0
		\end{equation}
		admits a unique solution $t \mapsto x(t; x_0, \zeta)$ on $[0,T]$.  We equip $\mathcal{Q}(T,x_0)$ with the topology induced by $X$.
		
		\begin{definition}\label{definition:solidcontrollabilityappendix}
			The system \eqref{equation:S} is solidly controllable from $x_0$ in time $T$, if there exist $\varepsilon>0$, a ball $B$, and a compact set $\mathcal{C} \subset \mathcal{Q}(T,x_0)$ such that for any continuous map $\Phi\colon \mathcal{C} \to \mathbb{R}^m$ satisfying
			\begin{equation*}\label{equation:epscondsolid}
				\sup_{\zeta \in \mathcal{C}} | \Phi(\zeta) - x(T;x_0, \zeta) |  \leq  \varepsilon,
			\end{equation*}
			one has $B \subset \Phi(\mathcal{C})$.
		\end{definition}
		
		\begin{definition}\label{definition:uac}
			The system \eqref{equation:S} is uniformly approximately controllable (UAC) from $x_0$ to a nonempty closed $\mathcal{K} \subset \mathbb{R}^m\setminus D$ in time $T$ with a family $(\Psi_{\delta})_{\delta > 0} \subset C^0(\mathcal{K}; \mathcal{Q}(T,x_0))$,
			if 
			\[
				\sup\limits_{\overline{x} \in \mathcal{K}} | x(T; x_0, \Psi_{\delta}(\overline{x}))-\overline{x}| \leq \delta
			\]
			for each $\delta > 0$.
		\end{definition}
		
		\begin{proposition}\label{proposition:ucisc}
			If \eqref{equation:S} is UAC from $x_0$ to $\overline{B}_{\mathbb{R}^m}(z,R)$ in time $T$ with a family $(\Psi_{\delta})_{\delta > 0} \subset C^0(\overline{B}_{\mathbb{R}^m}(z,R); \mathcal{Q}(T,x_0))$, then \eqref{equation:S} is solidly controllable from $x_0$ in time $T$ with the choices $\varepsilon\in(0,R)$, $B = \overline{B}_{\mathbb{R}^m}(z,R-\varepsilon)$, and 	$\mathcal{C} \coloneq \Psi_{\varepsilon/2}(\overline{B}_{\mathbb{R}^m}(z,R))$ in \Cref{definition:solidcontrollabilityappendix}.
		\end{proposition}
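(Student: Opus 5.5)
The plan is to reduce \Cref{proposition:ucisc} to the degree-theoretic statement \Cref{P:1}. Fix $\varepsilon\in(0,R)$ and set $\mathcal{C}\coloneq\Psi_{\varepsilon/2}(\overline{B}_{\mathbb{R}^m}(z,R))$ and $B\coloneq\overline{B}_{\mathbb{R}^m}(z,R-\varepsilon)$. Since $\Psi_{\varepsilon/2}$ is continuous from the compact ball into $\mathcal{Q}(T,x_0)$, the image $\mathcal{C}$ is compact in $\mathcal{Q}(T,x_0)$, as \Cref{definition:solidcontrollabilityappendix} requires. The tolerance appearing in that definition will be $\varepsilon/2$ rather than $\varepsilon$, since half of the budget is consumed by the UAC error.

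Next I take any continuous $\Phi\colon\mathcal{C}\to\mathbb{R}^m$ with $\sup_{\zeta\in\mathcal{C}}|\Phi(\zeta)-x(T;x_0,\zeta)|\leq\varepsilon/2$. I define the composite map
\[
S\coloneq\Phi\circ\Psi_{\varepsilon/2}\colon \overline{B}_{\mathbb{R}^m}(z,R)\to\mathbb{R}^m,
\]
which is continuous as a composition of continuous maps. It has image $\Phi(\mathcal{C})$. For $\overline{x}$ in the ball, the triangle inequality gives
\[
|S(\overline{x})-\overline{x}|\leq |\Phi(\Psi_{\varepsilon/2}(\overline{x}))-x(T;x_0,\Psi_{\varepsilon/2}(\overline{x}))| + |x(T;x_0,\Psi_{\varepsilon/2}(\overline{x}))-\overline{x}|\leq \tfrac{\varepsilon}{2}+\tfrac{\varepsilon}{2}=\varepsilon.
\]
The first term is bounded by the hypothesis on $\Phi$ and the second by the UAC property of \Cref{definition:uac} with $\delta=\varepsilon/2$.

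\Cref{P:1} then yields $\overline{B}_{\mathbb{R}^m}(z,R-\varepsilon)\subset S(\overline{B}_{\mathbb{R}^m}(z,R))=\Phi(\mathcal{C})$, which is exactly $B\subset\Phi(\mathcal{C})$.

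There is no real obstacle here. The only points to check are bookkeeping: the tolerance in \Cref{definition:solidcontrollabilityappendix} must be taken as $\varepsilon/2$, which is positive and depends only on the chosen $\varepsilon$, and compactness of $\mathcal{C}$ must be verified in the topology induced by $X$. The latter follows because $\Psi_{\varepsilon/2}$ takes values in $C^0(\overline{B}_{\mathbb{R}^m}(z,R);\mathcal{Q}(T,x_0))$.
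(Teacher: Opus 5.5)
Your proposal is correct and matches the paper's proof essentially step for step: compose $\Phi$ with $\Psi_{\varepsilon/2}$, bound $|\Phi(\Psi_{\varepsilon/2}(\overline{x}))-\overline{x}|\leq\varepsilon/2+\varepsilon/2$ by the triangle inequality, and invoke \Cref{P:1}. Your remark that the tolerance actually used in \Cref{definition:solidcontrollabilityappendix} is $\varepsilon/2$ rather than $\varepsilon$ agrees with the paper's own proof.
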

		
		\begin{proof}
			Let $\varepsilon\in(0,R)$. By UAC, there exists a continuous map
			\[
			\Psi = \Psi_{\varepsilon/2}\colon \overline{B}_{\mathbb{R}^m}(z, R) \to \mathcal{Q}(T,x_0)
			\]
			such that
			\begin{equation} \label{equation:e2}
				\sup\limits_{\overline{x} \in \overline{B}_{\mathbb{R}^m}(z, R)} 
				| x(T; x_0, \Psi(\overline{x}))-\overline{x}| \leq \varepsilon/2.
			\end{equation}
			Let $\mathcal{C} \coloneq \Psi(\overline{B}_{\mathbb{R}^m}(z,R))$, which is compact in $\mathcal{Q}(T,x_0)$ as a continuous image of a compact set. Now, let $\Phi\colon \mathcal{C} \to \mathbb{R}^m$ be any continuous mapping such that
			\begin{equation} \label{equation:e3}
				\sup_{\zeta \in \mathcal{C}} 
				|\Phi(\zeta) - x(T;x_0,\zeta) | \leq \varepsilon/2. {}
			\end{equation}
			Consider the composite map
			\[
			\Phi \circ \Psi \colon \overline{B}_{\mathbb{R}^m}(z,R) \to \mathbb{R}^m.
			\]
			Using \eqref{equation:e2} and \eqref{equation:e3}, we find that $|\Phi(\Psi(x))-x| \leq \varepsilon$ for all $x \in \overline{B}_{\mathbb{R}^m}(z,R)$. Applying \Cref{P:1} gives 
			\[
			\overline{B}_{\mathbb{R}^m}(z,R-\varepsilon) \subset (\Phi \circ \Psi)\left(\overline{B}_{\mathbb{R}^m}(z,R)\right)
			= \Phi(\mathcal{C}).
			\]
			Thus, the system \eqref{equation:S} is solidly controllable from $x_0$ in time $T$.
		\end{proof}
		
		\subsection{Curves}\label{section:curve}
		Let $m \in \mathbb{N}$ and denote for $k \geq 0$ and open $J\subset \mathbb{R}$ the $k$-th derivative of a function $f \in C^k(\overline{J}; \mathbb{R}^m)$ by $f^{(k)}$. We recall a classical method of constructing curves confined to a given ball and with prescribed behavior at the endpoints.
		\begin{lemma}\label{lemma:localcurve}
			Let $T>0$, $M \in \mathbb{N}$, and $x, y \in B \coloneq B_{\mathbb{R}^m}(z,R)$ for $z \in \mathbb{R}^m$ and $R > 0$. Further, fix any $u_1, \dots, u_M,w_1,\dots, w_M \in \mathbb{R}^m$.
			There exists a curve $\chi \in C^{\infty}([0,T]; \mathbb{R}^m)$ with $\chi([0,T]) \subset B$ such that
			\begin{equation*}
				\quad \chi(0) = x, \quad \chi(T) = y, \quad 
				\chi^{(k)}(0) = u_k, \quad \chi^{(k)}(T) = w_k
			\end{equation*}
			for all $1 \leq k \leq M$ and
			\begin{equation*}
				\chi^{(k)}(0) = 0, \quad \chi^{(k)}(T) = 0
			\end{equation*}
			for all $k > M$. In addition, given any open bounded set $W \subset \mathbb{R}^m$, $s \in \mathbb{N}$, and $r \in (0,R)$, one can assign $(x, y, u_1, \dots, u_M,w_1,\dots, w_M)\mapsto\chi$ through a smooth map 
			\[
				B_{\mathbb{R}^m}(z,r)^2\times W^{2M} \longrightarrow C^s([0,T]; \mathbb{R}^m).
			\]
		\end{lemma}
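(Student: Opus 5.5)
The plan is to build $\chi$ from an explicit polynomial part fixed by the endpoint data, multiplied by cutoffs that are flat at the endpoints, and then to push the curve into $B$ by a time reparametrization concentrating the motion near the endpoints.

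First I handle the derivative data separately from the path. Fix a smooth cutoff $\theta\in C^\infty(\mathbb{R};[0,1])$ with $\theta=1$ on $[-1,1]$ and $\theta=0$ outside $[-2,2]$. For a small parameter $h\in(0,T/4)$ set
\[
p_0(t)\coloneq\theta(t/h)\sum_{k=1}^M \frac{u_k t^k}{k!},\qquad p_T(t)\coloneq\theta((t-T)/h)\sum_{k=1}^M \frac{w_k (t-T)^k}{k!}.
\]
On $[0,h]$ the cutoff equals one, so $p_0^{(k)}(0)=u_k$ for $k\le M$ and $p_0^{(k)}(0)=0$ for $k>M$. Moreover $\sup|p_0|\le C h\max_k|u_k|$, uniformly for $u_k\in W$, and the same holds for $p_T$.

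Next comes the base path. Let $\sigma\in C^\infty([0,T];[0,1])$ be nondecreasing with $\sigma=0$ near $0$, $\sigma=1$ near $T$, and all derivatives of $\sigma$ vanishing at both endpoints; for instance take $\sigma(t)=\psi(t/T)$ with $\psi$ smooth, equal to $0$ on $[0,1/3]$ and to $1$ on $[2/3,1]$. Set
\[
\chi(t)\coloneq x+\sigma(t)(y-x)+p_0(t)+p_T(t).
\]
The segment $x+\sigma(y-x)$ lies in $B$ because balls are convex, and its derivatives vanish at $0$ and $T$. Hence the endpoint values and all derivative conditions hold, provided $h$ is small enough that the supports of $p_0$, $p_T$ and $\sigma'$ don't interfere. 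For confinement, $x,y\in B_{\mathbb{R}^m}(z,r)$ and convexity give that the segment lies in $B_{\mathbb{R}^m}(z,r)$. Choosing $h$ so that $Ch\sup_W|u|+Ch\sup_W|w|<R-r$ then keeps $\chi([0,T])\subset B$.

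Finally, for the smooth-dependence claim, the point is that $h$ can be fixed once, depending only on $r$, $R$, $W$ and $T$, and not on the individual data. With $h$ fixed, the formula for $\chi$ is affine in $(x,y,u_1,\dots,w_M)$, with coefficients in $C^\infty([0,T])$. It is therefore a smooth (indeed affine and continuous) map into $C^s([0,T];\mathbb{R}^m)$ for every $s$.

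The only genuine obstacle is this uniform choice of $h$: confinement must hold while the map stays smooth. A data-dependent $h$ would break smoothness, and that is exactly why the statement shrinks to the radius $r<R$ and bounds the derivative data by the bounded set $W$.
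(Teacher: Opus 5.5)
Your proof is correct and is essentially the paper's argument: a flat reparametrization $x+\sigma(t)(y-x)$ of the segment from $x$ to $y$, plus the Taylor polynomials $\sum_k u_k t^k/k!$ and $\sum_k w_k (t-T)^k/k!$ multiplied by cutoffs whose scale is fixed once in terms of $R-r$ and $W$, so that the resulting map is affine in the data and hence smooth into $C^s([0,T];\mathbb{R}^m)$. One descriptive slip: your opening mention of a ``time reparametrization concentrating the motion near the endpoints'' does not match what you then do, since confinement comes from shrinking the cutoff scale $h$, exactly as the paper shrinks its $\varepsilon$.
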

		
		\begin{proof}
			Since $B$ is convex, $S\coloneq \{(1-\xi)x+\xi y \, | \,  0 \leq \xi \leq 1\} \subset B$. Moreover, there exists $\delta>0$ with $\operatorname{dist}(S,\mathbb{R}^m\setminus B) > \delta$. Now, take $\varphi \in C^{\infty}([0,\infty);[0,1])$ such that $\varphi(s) = 1$  for $0 \leq s \leq 1/3$ and $\varphi(s) = 0$ for $s \geq 2/3$, and a function $h \in C^{\infty}([0,T]; [0,1])$ such that $h(t)=0$ for $0 \leq t \leq T/4$ and $h(t) = 1$ for $3T/4 \leq t \leq T$.
			Then, define $\gamma(t) \coloneq x+h(t)(y-x)$ for $t \in [0,T]$. In particular, it holds
			\[
			\gamma([0,T]) \subset S \subset B, \quad \gamma(0) = x, \quad \gamma(T)=y, \quad \gamma^{(j)}(0) = 0, \quad \gamma^{(j)}(T)=0
			\]
			for $j \in \mathbb{N}$. Now, for small $\varepsilon \in (0,T/4)$ fixed later, consider the smooth paths 
			\[
			A_k(t) \coloneq \frac{t^k}{k!}\varphi \left( \frac{t}{\varepsilon} \right), \quad B_k(t) \coloneq \frac{(-1)^k (T-t)^k}{k!}\varphi \left( \frac{T-t}{\varepsilon} \right)
			\]
			for $1 \leq k \leq M$ and $t\in[0,T]$ that will be used to perturb $\gamma$ near its endpoints on $[0,T/4]\cup[3T/4,T]$. By construction,
			\[
			A_k^{(j)}(T) = B_k^{(j)}(0) = 0, \quad  A_k^{(j)}(0) = B_k^{(j)}(T) = \begin{cases}
				1, & \mbox{ if } j = k,\\
				0, & \mbox{ if } j \neq k
			\end{cases}
			\]
			for all $0 \leq j \leq M$ and $1 \leq k \leq M$. Finally, define the function
			\begin{equation}\label{equation:appendixphi}
				\chi(t) \coloneq \gamma(t)+\sum_{k=1}^M u_k A_k(t)+\sum_{k=1}^M w_k B_k(t)
			\end{equation}
			for $t \in [0,T]$. To ensure that $\chi([0,T]) \subset B$, fix sufficiently small $\varepsilon \in (0,T/4)$ uniformly with respect to $x, y \in B$ and $u_1, \dots, u_M,w_1,\dots, w_M \in W$ such that
			\[
			\sup_{0\leq t\leq T} |\chi(t)-\gamma(t)| \leq \sum_{k=1}^M \frac{\varepsilon^k}{k!}\left(|u_k|+|w_k|\right) < \delta.
			\]
			The claim on smooth dependence follows because $0 < \delta < R - r$ for $(x,y) \in B_{\mathbb{R}^m}(z,r)^2$ and $\chi$ defined via \eqref{equation:appendixphi} is affine in $x, y, u_1, \dots, u_M,w_1,\dots, w_M$.
		\end{proof}

\subsection{Riesz interaction energy}\label{section:Riesz}
In this section, we prove the coercivity estimate stated in~\Cref{lemma:coercivity} for the purely repulsive Riesz energy
\begin{equation}\label{equation:rea}
	\mathcal{H}_{N+1}(x_1,\dots,x_{N+1}) = \sum_{1 \leq i < j \leq {N+1}} \frac{c_s}{|x_i - x_j|^{s}} \boxnote{\quad \quad (x \in \Delta^{\complement}_{N+1}),}
\end{equation}
where $s \in (0,d)$, $d\ge2$, and $c_s>0$.

\begin{proof}[Proof of \Cref{lemma:coercivity}]
	 For each $x \in \Delta^{\complement}_{N+1}$, we denote $r(x) \coloneq \min_{i < j}|x_i - x_j|$ for the associated minimal interparticle distance and further denote the ratio
	 \[
	 	J(x) \coloneq \frac{\Sigma(x)}{\mathcal{H}_{N+1}(x)^{\frac{2s+2}{s}}},
	 \]
	 where
	\begin{gather*}
		\Sigma(x) \coloneq \sum_{i=1}^{N+1}|\nabla_{x_i}\mathcal{H}_{N+1}(x)|^2.
	\end{gather*}
	As $J$ is dilation invariant and $\mathcal{H}_{N+1}(x) \in [c_s,2^{-1}c_sN(N+1)]$ for all $x \in \Delta^{\complement}_{N+1}$ with $r(x) = 1$, the proof can be completed by showing that
	\begin{equation}\label{equation:sigmab}
		\sigma_* \coloneq \inf_{\substack{x \in \Delta^{\complement}_{N+1}, \\ r(x) = 1}}\Sigma(x) > 0.
	\end{equation}
	Note that the identity (see \eqref{equation:e})
	\begin{equation}\label{equation:ee}
		\sum_{i=1}^{N+1} x_i \cdot \nabla_{x_i} \mathcal{H}_{N+1}(x) = -s\,\mathcal{H}_{N+1}(x)
		\boxnote{\qquad  \quad \,\,\, (x \in \Delta^{\complement}_{N+1})}
	\end{equation}
	already implies that $\Sigma(x) > 0$ holds for each $x \in \Delta^{\complement}_{N+1}$. The uniform lower bound \eqref{equation:sigmab} can now be verified by contradiction. Suppose that $\sigma_* = 0$ and let $(x^{n})_{n\in\mathbb{N}} \subset \{r=1\}$ be a sequence with $\lim_{n\to \infty}\Sigma(x^{n}) = 0$. For each~$n$, at least two particles realize the minimal distance. Thus, after passing to a subsequence, renaming indices, and applying a translation (noting that $\Sigma$ and $r$ are translation invariant), it can be assumed that there is a number $R > 0$ and a particle cluster indexed by a set $\mathcal{G} \subset \{1,\dots, N+1\}$ such that 
	\[
	|\mathcal{G}| \geq 2, \quad \bigcup_{i \in \mathcal{G}, \, n \in \mathbb{N}} \{x_i^n\} \subset B_{\mathbb{R}^d}(0,R), \quad \inf_{\substack{i \in \mathcal{G}, \,  j \notin \mathcal{G}}} \liminf_{n\to\infty} |x_i^n - x_j^n| = \infty.
	\]
	Passing to another subsequence, for each $i \in \mathcal{G}$ there exists $\overline{x}_i \in \mathcal{M}$ with
	\begin{gather*}
		\lim_{n\to \infty} (x_i^n)_{i \in \mathcal{G}} = (\overline{x}_i)_{i \in \mathcal{G}}, \quad \inf_{\substack{i \in \mathcal{G}, \,  j \notin \mathcal{G}}} \liminf_{n\to\infty} |\overline{x}_i - x_j^n| = \infty,\\ \min_{i, j \in \mathcal{G}, \, i \neq j}|\overline{x}_i - \overline{x}_j| = 1.
	\end{gather*}
	However, denoting by $\mathcal{H}_{|\mathcal{G}|}$ the Riesz energy of the particles indexed by $\mathcal{G}$, the assumption $\Sigma(x^n) \to 0$ implies
	\[
	\sum_{i\in\mathcal{G}} \left|\nabla_{x_i}\mathcal{H}_{|\mathcal{G}|}((\overline{x}_i)_{i \in \mathcal{G}})\right| = 0,
	\]
	contradicting the version of \eqref{equation:ee} which holds for $\mathcal{H}_{|\mathcal{G}|}$ due to positive homogeneity of the kernel $x \mapsto c_s |x|^{-s}$ in \eqref{equation:rea} with $N = |\mathcal{G}|-1$.
\end{proof}

		\bibliographystyle{alpha}
		\bibliography{main}
		
	\end{document}